\newcommand{\C}{{\mathbb C}}
\newcommand{\N}{{\mathbb N}}
\renewcommand{\P}{{\mathbb P}}
\newcommand{\T}{{\mathbb T}}
\newcommand{\cO}{{\mathcal O}}
\newcommand{\s}{\mathcal}
\newcommand{\sI}{{\s I}}
\newcommand{\punkt}{\HHspace{-.3ex}\raise.15ex\HHbox to1ex{\HHuge.}}
\newcommand{\paper}{: \begin{it}}
\newcommand{\jour }{, \end{it}}
\newtheorem{theorem}{Theorem}[section]
\newtheorem{lemma}[theorem]{Lemma}
\newtheorem{proposition}[theorem]{Proposition}
\newtheorem{corollary}[theorem]{Corollary}
\newtheorem{conjecture}[theorem]{Conjecture}
\theoremstyle{definition}
\newtheorem{definition}[theorem]{Definition}
\newtheorem{example}[theorem]{Example}
\theoremstyle{remark}
\newtheorem{remark}[theorem]{Remark}
\numberwithin{equation}{section}
\begin{document}

\title
{
Secant varieties of Segre-Veronese varieties $\P^m \times \P^n$ embedded by $\cO(1,2)$
 }
\author{Hirotachi Abo}
\address{Department of Mathematics, University of Idaho, Moscow, ID 83844, USA}
\email{abo@uidaho.edu}
\author{Maria Chiara Brambilla}
\address{Dipartimento di Scienze Matematiche, Universit\`a Politecnica delle Marche, Ancona, Italy}
\email{brambilla@math.unifi.it, brambilla@dipmat.univpm.it}
\subjclass[2000]{14M99, 14Q99, 15A69, 15A72}
\keywords{Secant varieties, Segre-Veronese varieties, defectiveness}
\date{}

\begin{abstract}
Let $X_{m,n}$ be the Segre-Veronese variety $\P^m \times \P^n$ embedded by the
morphism given by $\cO(1,2)$. In this paper, we
provide two functions $\underline{s}(m,n)\le \overline{s}(m,n)$
such that the $s^{\mathrm{th}}$ secant
variety of $X_{m,n}$ has the expected dimension if
$s \leq \underline{s}(m,n)$ or $ \overline{s}(m,n) \leq s$.
We also present a conjecturally complete list of defective
secant varieties of such Segre-Veronese varieties.
\end{abstract}
\maketitle
\section{Introduction}
\label{sec:intro}
Let $X\subset \P^N$ be an irreducible non-singular variety of
dimension $d$.  Then the $s^{\mathrm {th}}$ {\it secant variety} of $X$, denoted
$\sigma_s(X)$, is defined to be the Zariski closure of  the union of
the linear spans of all $s$-tuples of points of $X$.
The study of secant varieties has a long history.
The interest in this subject goes back to the
Italian school at the turn of the $20^{\mathrm{th}}$ century.
This topic has
received renewed interest over the past several decades,
mainly due to its increasing importance in an ever widening collection of  disciplines including
algebraic complexity theory~\cite{BCS, La2, La1}, algebraic
statistics~\cite{GSS, ERSS, AHOT}, and combinatorics~\cite{StSu, sullivant}.

The major questions surrounding secant varieties center around
finding invariants of those objects such as dimension. A  simple
dimension count suggests that the expected dimension of
$\sigma_s(X)$ is $\min \left\{N,s(d+1)-1\right\}$.  We say that
$X$ has a {\it defective $s^{\mathrm{th}}$ secant variety} if
$\sigma_s(X)$ does not have the expected dimension. In particular,
$X$ is said to be  {\it defective} if $X$ has a defective
$s^{\mathrm{th}}$ secant variety for some $s$.  For instance, the
Veronese surface $X$ in $\P^5$ is defective, because the dimension
of $\sigma_2(X)$ is four while its expected dimension is five.  A
well-known classification of the defective Veronese varieties was
completed in a series of papers by Alexander and
Hirschowitz~\cite{AH} (see also \cite{BO}). 
There are corresponding conjecturally
complete lists of defective Segre varieties~\cite{AOP} and
defective Grassmann varieties~\cite{BDG}. Secant varieties of
Segre-Veronese varieties are however less well-understood. In
recent years, considerable efforts have been made to develop
techniques to study secant varieties of such varieties (see  for
example~\cite{CGG, CaCh, CaCa, Ott, CGG1, Bal, Abr}). But  even
the classification of defective two-factor Segre-Veronese
varieties is still far from complete.

In order to classify defective Segre-Veronese varieties,
a crucial step is to prove the existence of a large family of
non-defective such varieties.
A powerful tool to establish non-defectiveness of large classes of
Segre-Veronese varieties
is the inductive approach based on specialization techniques,
which consist in placing a certain number of points on a chosen divisor.
For a given $\mathbf{n}=(n_1, \dots, n_k)  \in \N^k$, we write
$\P^{\mathbf{n}}$ for $\P^{n_1} \times \cdots \times \P^{n_k}$.
Let $X_{\mathbf{n}}^{\mathbf{a}}$ be the Segre-Veronese variety
$\P^{\mathbf{n}}$ embedded by the morphism given by
$\cO(\mathbf{a})$ with $\mathbf{a}=(a_1, \dots, a_k) \in \N^k$. As
we shall see in Section~\ref{sec:splittingtheorem},
the problem of determining the dimension of $\sigma_s(X_{\mathbf{n}}^{\mathbf{a}})$
is equivalent to the problem of determining  the value of the Hilbert function
$h_{\P^{\mathbf{n}}}(Z, \cdot)$ of a collection $Z$ of
$s$ general double points in $\P^{\mathbf{n}}$ at $\mathbf{a}$, i.e.,
\[
h_{\P^{\mathbf{n}}}(Z, \mathbf{a}) = \dim H^0 ( \P^{\mathbf{n}},
\cO(\mathbf{a})) - \dim H^0 ( \P^{\mathbf{n}}, \sI_Z(\mathbf{a})).
\]
Suppose that $a_k \geq 2$. Denote by $\mathbf{n'}$ and $\mathbf{a}'$ the $k$-tuples
$(n_1, n_2, \dots, n_k-1)$ and $(a_1, a_2, \dots, a_k-1)$ respectively.
Given a $\P^{\mathbf{n}'} \subset \P^{\mathbf{n}}$,
we have an exact sequence
\[
 0 \rightarrow \sI_{\widetilde{Z}}(\mathbf{a}') \rightarrow \sI_Z(\mathbf{a}) \rightarrow
 \sI_{Z \cap \P^{\mathbf{n'}},  \P^{\mathbf{n'}}}(\mathbf{a}) \rightarrow 0,
\]
where $\widetilde{Z}$ is the residual scheme of $Z$ with respect to $\P^{\mathbf{n}'}$.
This exact sequence gives rise to the so-called {\it Castelnuovo inequality}
\[
h_{\P^{\mathbf{n}}}(Z, \mathbf{a}) \geq h_{\P^{\mathbf{n}}}(\widetilde{Z}, \mathbf{a}')
+ h_{\P^{\mathbf{n}'}}(Z \cap \P^{\mathbf{n}'}, \mathbf{a}).
\]
Thus, we can conclude that
\begin{itemize}
\item[-] if $h_{\P^{\mathbf{n}}}(\widetilde{Z}, \mathbf{a}')$ and
$h_{\P^{\mathbf{n}'}}(Z \cap \P^{\mathbf{n}'}, \mathbf{a}')$ are the expected values and
\item[-] if the degrees of $\widetilde{Z}$ and $Z\cap \P^{\mathbf{n}'}$ are both less than or both greater than $\dim H^0(\P^{\mathbf{n}}, \cO(\mathbf{a}'))$ and $\dim H^0(\P^{\mathbf{n}'}, \cO(\mathbf{a}))$ respectively,
\end{itemize}
then $h_{\P^{\mathbf{n}}}(Z, \mathbf{a})$ is also the expected value.
By semicontinuity,  the Hilbert function of
a general collection of $s$ double points in $\P^{\mathbf{n}}$
has the expected value at $\mathbf{a}$.
This enables one to check whether or not $\sigma_s(X_{\mathbf{n}}^{\mathbf{a}})$
has the expected dimension by induction on $\mathbf{n}$ and $\mathbf{a}$.

To apply this inductive approach, we need some initial cases regarding either
dimensions or degrees.
The class of secant varieties of two-factor Segre-Veronese varieties
embedded by the morphism given by $\cO(1,2)$ can be viewed as one of such initial cases.
In fact, in this case the above-mentioned specialization technique
would involve  secant varieties of two-factor Segre varieties, most of which are known to be defective,
and thus we cannot apply this technique to find $\dim \sigma_s(X_{\mathbf{n}}^{\mathbf{a}})$
for $\mathbf{n}=(m,n)$ and $\mathbf{a}=(1,2)$.
To sidestep this problem, we therefore need an {\it ad hoc} approach.

This paper is devoted to studying secant varieties of Segre-Veronese varieties  $\P^m \times \P^n$
embedded by the morphism given by $\cO(1,2)$. Let
\[
q(m,n)= \left\lfloor\frac{(m+1){n+2 \choose 2}}{m+n+1} \right\rfloor.
\]
Our main goal is to prove the following theorem:
\begin{theorem}
Let $\mathbf{n} = (m,n)$ and let $\mathbf{a}=(1,2)$. If $n$ is
sufficiently large, then $\sigma_{s}(X_{\mathbf{n}}^{\mathbf{a}})$
has the expected dimension for $s = q(m,n)$. \label{th:mainintro}
\end{theorem}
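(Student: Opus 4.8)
The plan is to reduce the statement, via the equivalence recalled in the introduction, to the assertion that a general collection $Z$ of $q(m,n)$ double points in $\P^m\times\P^n$ imposes independent conditions on $\cO(1,2)$ — equivalently, since $q(m,n)$ is the largest integer $s$ with $s(m+n+1)\le(m+1)\binom{n+2}{2}$, that $h^0(\P^m\times\P^n,\sI_Z(1,2))$ attains its expected value. Because the naive specialization onto a divisor $\P^{m-1}\times\P^n$ or $\P^m\times\P^{n-1}$ drags in secant varieties of Segre varieties $\P^m\times\P^{n}$ embedded by $\cO(1,1)$, which are typically defective, I would not induct by peeling off a whole hyperplane. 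Instead I would use the \emph{ad hoc} specialization appropriate to $\cO(1,2)$: degenerate a controlled number of the double points onto a copy of $\P^m\times\P^{n-1}\hookrightarrow\P^m\times\P^n$ (the divisor of class $\cO(0,1)$), and analyze the residual/trace exact sequence $0\to\sI_{\widetilde Z}(1,1)\to\sI_Z(1,2)\to\sI_{Z\cap\P^m\times\P^{n-1}}(1,2)\to 0$. The trace piece lives on $\P^m\times\P^{n-1}$ embedded by $\cO(1,2)$, so it is governed by the same problem in one lower value of $n$; the residual piece lives in bidegree $(1,1)$, where a double point imposes at most the rank of the differential of the Segre map — here one should specialize \emph{further}, splitting each residual double point as a simple point plus a tangent-vector condition, so that the $(1,1)$-side becomes a question about a scheme of simple points plus a few tangent directions, whose postulation one can control directly because $\dim H^0(\cO(1,1))=(m+1)(n+1)$ is large relative to the number of surviving conditions.

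The key steps, in order, are: (1) record $\dim H^0(\cO(1,2))=(m+1)\binom{n+2}{2}$ and set up the numerology so that $q(m,n)(m+n+1)$ leaves a remainder $r=r(m,n)\in\{0,\dots,m+n\}$, which measures the "slack"; (2) choose integers $s_1,s_2$ with $s_1+s_2=q(m,n)$, placing $s_1$ double points generically on $\P^m\times\P^{n-1}$ and keeping $s_2$ generic in the ambient space, chosen so that $s_1$ is (up to the correction from the extra tangent directions) the right count for the $\cO(1,2)$-problem on $\P^m\times\P^{n-1}$ and $s_2$ double points plus $s_1$ simple points (the residual) is the right count in bidegree $(1,1)$; (3) verify the $(1,1)$-base case — that $s_2$ general double points together with $s_1$ general simple points impose independent conditions on $\cO(1,1)$ on $\P^m\times\P^n$ — using the flattening of double points on a Segre variety (a double point in $\cO(1,1)$ imposes $m+n+1$ conditions, the dimension of the affine cone over the Segre), and here one may need a secondary degeneration or a direct rank computation on the $(m+1)\times(n+1)$ matrix of coordinates; (4) run the induction on $n$, using the trace exact sequence to pass from $n-1$ to $n$, with the base of the induction being a value $n_0=n_0(m)$ small enough to check by hand or by the known Segre-Veronese results cited (\cite{CGG,CaCh,Ott}); (5) invoke semicontinuity to pass from the special configuration back to a general one, and read off $\dim\sigma_{q(m,n)}(X_{(m,n)}^{(1,2)})$.

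The main obstacle I anticipate is step (2)–(3): making the bookkeeping of $s_1,s_2$ and the number of leftover tangent directions work out \emph{simultaneously} on both sides of the exact sequence. The Castelnuovo inequality only gives the expected value for $h_{\P^{\mathbf n}}(Z,\mathbf a)$ when the two residual/trace contributions are both below (or both above) their respective $h^0$ thresholds; near the critical value $s=q(m,n)$ one is exactly at the boundary, so the remainder $r(m,n)$ must be distributed between the two sides with surgical precision, and this is where an honest computation — possibly organized by the congruence class of $n$ modulo $m+1$ — cannot be avoided. A further delicate point is that after the degeneration the residual scheme is no longer a union of honest double points but carries attached tangent vectors in prescribed directions (normal to the divisor); controlling the postulation of such "thickened" schemes in bidegree $(1,1)$ is the technical heart, and "$n$ sufficiently large" is precisely the hypothesis that buys enough room in $H^0(\cO(1,1))$ and $H^0(\cO(1,2))$ for the slack to be absorbed. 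If a clean closed form for the optimal split is elusive, the fallback is to prove the two one-sided inequalities $\dim\sigma_{q}(X)\le$ expected (trivial) and $\ge$ expected (via exhibiting one configuration of the right rank, found by the above degeneration plus a computer-assisted check in finitely many residue classes), which still yields the theorem.
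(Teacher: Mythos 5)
Your proposal correctly sets up the Terracini/fat-points dictionary, the general shape (specialize, residual/trace sequence, induct on $n$, conclude by semicontinuity), and even names the right danger — but the central specialization step you then adopt is precisely the one the paper's introduction rules out, and your proposed repair does not overcome it. You degenerate points onto the divisor $D=\P^m\times\P^{n-1}$ of class $\cO(0,1)$, so the residual scheme must impose independent conditions on $\cO(1,1)$. In bidegree $(1,1)$ the relevant secant varieties are those of the Segre variety, i.e.\ loci of $(m+1)\times(n+1)$ matrices of bounded rank, and already \emph{two} general double points fail to impose independent conditions there. The differential-Horace device you invoke (replacing a residual double point by a simple point plus a tangent vector) does not remove this: the span of the tangent spaces to the Segre at $s\ge 2$ general points is always deficient, so no choice of the split $s_1+s_2=q(m,n)$ with $s_2\ge 2$ can make the residual step work, while $s_2\le 1$ overloads the trace side, since $q(m,n)-q(m,n-1)$ grows roughly like $(m+1)/2$. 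The "surgical distribution of the remainder" you defer to is therefore not a bookkeeping issue but a genuine impossibility for this choice of divisor.

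The paper's actual route keeps both sides of the exact sequence in bidegree $(1,2)$: it specializes $\underline{s}(m,n)-(m+1)$ points onto a codimension-\emph{two} subvariety $L=\P(V)\times\P(U_L)\cong\P^m\times\P^{n-2}$ and uses $0\to\sI_{Z\cup L}(1,2)\to\sI_Z(1,2)\to\sI_{Z\cap L,L}(1,2)\to 0$, with no twist down. The trace term is the same problem for $(m,n-2)$, giving a two-step induction on $n$; the residual term is the auxiliary statement $\underline{R}(m,n)$, itself proved by a further induction through $Q(m,n)$ (two codimension-2 subvarieties) with explicit base cases $\underline{R}(m,m)$ and $\underline{R}(m,m+1)$. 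The base of the induction on $n$, the pairs $(n+1,n)$, is handled by a splitting theorem that decomposes the \emph{first} factor $V=V'\oplus V''$ — an ingredient absent from your sketch. Finally, "$n$ sufficiently large" enters not as room to absorb slack but because $\underline{s}(m,n)=q(m,n)$ exactly once $n$ exceeds the explicit bound $r(m,n)$. To repair your argument you would need to replace the $\cO(0,1)$ divisorial specialization by the codimension-2 one (or some other device that never asks a collection of two or more double points to behave well in bidegree $(1,1)$).
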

A straightforward consequence of this theorem is the following:
\begin{corollary}
Let $\mathbf{n} = (m,n)$ and let $\mathbf{a}=(1,2)$. If $n$ is
sufficiently large, then $\sigma_{s}(X_{\mathbf{n}}^{\mathbf{a}})$
has the expected dimension for all $s \leq q(m,n)$.
\end{corollary}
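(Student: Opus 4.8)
The plan is to reduce the statement about the dimension of $\sigma_s(X_{\mathbf{n}}^{\mathbf{a}})$ for $s = q(m,n)$ to a statement about the Hilbert function of a collection of $s$ general double points in $\P^m \times \P^n$ at bidegree $(1,2)$, using the equivalence recalled in the introduction (to be made precise in Section~\ref{sec:splittingtheorem}). Since $q(m,n) = \lfloor (m+1)\binom{n+2}{2}/(m+n+1)\rfloor$ is chosen so that $s(m+n+1)$ is as close as possible to $\dim H^0(\P^{\mathbf{n}}, \cO(1,2)) = (m+1)\binom{n+2}{2}$, the expected value of $h_{\P^{\mathbf{n}}}(Z,(1,2))$ at this critical value of $s$ is $\min\{s(m+n+1), (m+1)\binom{n+2}{2}\} = s(m+n+1)$; equivalently, $\dim H^0(\P^{\mathbf{n}}, \sI_Z(1,2))$ should equal $(m+1)\binom{n+2}{2} - s(m+n+1)$, which is the smallest nonnegative value compatible with the number of imposed conditions. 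So the goal becomes: for $n \gg 0$, a general union $Z$ of $s = q(m,n)$ double points in $\P^m\times\P^n$ imposes independent conditions on forms of bidegree $(1,2)$.

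The main tool will be a specialization/degeneration argument, but — as the authors emphasize — \emph{not} the standard Castelnuovo specialization onto a $\P^{\mathbf{n}'}$ divisor, since that would drag in defective Segre varieties. Instead I would specialize some of the $s$ double points onto a subvariety on which the restricted linear system is better understood; a natural candidate is to place points on a fibre $\{p\}\times\P^n$ (where $\cO(1,2)$ restricts to $\cO_{\P^n}(2)$, a Veronese system governed by Alexander–Hirschowitz) or on $\P^m\times\{q\}$ (where it restricts to $\cO_{\P^m}(1)$), and to use the residuation exact sequence associated to such a fibre. One sets up an exact sequence $0 \to \sI_{\widetilde Z}(?) \to \sI_Z(1,2) \to \sI_{Z\cap F, F}(1,2|_F) \to 0$ with $F$ the chosen fibre, distributes the $s$ double points between $F$ and its complement optimally (chopping one unit off the bidegree in the $\P^n$ or $\P^m$ direction), and checks that both the residual system and the trace system impose independent conditions — the latter via Alexander–Hirschowitz on $\P^n$, the former by an induction on $n$. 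Because $q(m,n)$ grows like a polynomial in $n$ of degree two while the fibre only absorbs $O(n)$ conditions, one iterates: peeling off one fibre at a time reduces $n$, and "sufficiently large $n$" is exactly the hypothesis that allows the inductive counting to close and the small-$n$ base cases (handled separately, perhaps by direct computation à la \Mac) to be bypassed.

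In executing this, the steps in order are: (i) translate to the double-points Hilbert function problem via the splitting theorem of Section~\ref{sec:splittingtheorem}; (ii) verify the arithmetic that with $s=q(m,n)$ the expected value is $s(m+n+1)$, so that non-defectiveness is equivalent to independence of conditions and one only needs $h^1(\P^{\mathbf{n}}, \sI_Z(1,2)) = 0$ or $h^0(\P^{\mathbf{n}}, \sI_Z(1,2))$ minimal — whichever side the count falls on, controlled by the floor function; (iii) choose the specialization locus (a fibre, or a union of a fibre plus a point) and write the residuation sequence; (iv) apply Alexander–Hirschowitz to control the trace on the Veronese factor $\P^n$ and distribute points so that both trace and residue are in the "all-or-nothing" regime forced by the degree comparison in the second bullet of the introduction; (v) set up the induction on $n$ and identify the threshold on $n$ past which it runs. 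The hard part will be step (iii)–(iv): finding a specialization whose residual and trace pieces \emph{both} have the expected Hilbert function simultaneously, since the obvious distribution of points tends to leave one of the two pieces slightly off, and one must either borrow the trick of moving a single point between the two strata, or use a more refined degeneration (e.g. a "limit linear system" type argument in the style of~\cite{CGG, CaCh}) to absorb the leftover fractional part introduced by the floor in $q(m,n)$. Controlling this remainder term uniformly in $m$ as $n\to\infty$ is the crux; everything else is bookkeeping, and the corollary then follows immediately by semicontinuity, since having the expected (maximal) dimension for $s=q(m,n)$ forces it for all smaller $s$.
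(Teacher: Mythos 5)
For the corollary as stated, the only part of your proposal that is actually needed is your final sentence: Theorem~\ref{th:mainintro}, stated immediately before the corollary, already gives the expected dimension at $s=q(m,n)$, and since $q(m,n)=\lfloor (m+1)\binom{n+2}{2}/(m+n+1)\rfloor$ makes $(m,n;1,2;q(m,n))$ subabundant, the $q(m,n)$ tangent spaces at general points are linearly independent; hence any $s\le q(m,n)$ of them are independent as well, and by semicontinuity a general $s$-tuple of points does at least as well as a subset of a general $q(m,n)$-tuple. That is exactly the paper's (implicit, one-line) proof, and your closing deduction is correct; your earlier check that the minimum in the expected dimension equals $s(m+n+1)$ at $s=q(m,n)$ is precisely the subabundance verification this descent relies on.

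The bulk of your proposal, however, is an attempt to re-prove Theorem~\ref{th:mainintro} itself, and there the plan has a concrete gap. The loci $\{p\}\times\P^n$ and $\P^m\times\{q\}$ you propose to specialize onto are not divisors in $\P^m\times\P^n$ once $m\ge 2$ (resp.\ $n\ge 2$), so the residuation exact sequence you write down, with its twist by the chosen locus, does not exist in the form you need. The actual divisors have bidegree $(1,0)$ or $(0,1)$: residuating with respect to $\P^m\times\P^{n-1}$ lands you in bidegree $(1,1)$, i.e.\ precisely the defective Segre systems the introduction warns against, while residuating with respect to $\P^{m-1}\times\P^n$ lands you in bidegree $(0,2)$, which is too degenerate to carry the induction. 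The paper escapes this with two different tools: the splitting theorem of Section~\ref{sec:splittingtheorem}, which decomposes the first factor as $V=V'\oplus V''$ and settles the near-diagonal base cases $T(n+1,n;1,2;\cdot)$, and the codimension-two specialization of Section~\ref{sec:subabundant}, where points placed on $L=\P(V)\times\P(U_L)\cong\P^m\times\P^{n-2}$ contribute only $2$ dimensions each modulo $V\otimes S_2(U_L)$ (the statements $\underline{R}(m,n)$ and $Q(m,n)$). Neither is a divisorial Castelnuovo argument, and Alexander--Hirschowitz on the $\P^n$ factor never enters. So if you intend your write-up to be self-contained rather than to quote Theorem~\ref{th:mainintro}, the difficulty you yourself flag in steps (iii)--(iv) is not bookkeeping: it is the entire content of Sections~\ref{sec:splittingtheorem} and~\ref{sec:subabundant}.
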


In order to prove Theorem~\ref{th:mainintro},
we show that if $m \leq n+2$,  then $\sigma_{\underline{s}(m,n)}(X_{\mathbf{n}}^{\mathbf{a}})$
has the expected dimension, where
\[
 \underline{s}(m,n)=
 \left\{
 \begin{array}{ll}
 (m+1)\left\lfloor n/2 \right\rfloor-\frac{(m-2)(m+1)}{2} & \mbox{if $n$ is even;}  \\
 (m+1)\left\lfloor n/2 \right\rfloor-\frac{(m-3)(m+1)}{2} & \mbox{if $m$ and $n$ are  odd;} \\
  (m+1)\left\lfloor n/2 \right\rfloor-\frac{(m-3)(m+1)+1}{2} & \mbox{if $m$ is even and if $n$ is odd. } \
 \end{array}
 \right.
\]
Theorem~\ref{th:mainintro} then follows immediately, because $\underline{s}(m,n) =q(m,n)$ for a sufficiently large $n$ (an explicit bound for $n$ can be found just before Corollary~\ref{cor:subabundant}).

To prove that $\sigma_{\underline{s}(m,n)}(X_{\mathbf{n}}^{\mathbf{a}})$
has the expected dimension, we will use double induction on $m$ and $n$.
More precisely, we will show the following two claims:
\begin{itemize}
 \item[(i)]
Let $\mathbf{n}=(n+1,n)$. Then the secant variety
$\sigma_{\underline{s}(n+1,n)}(X_{\mathbf{n}}^{\mathbf{a}})$ has the
expected dimension. Note that the case $\mathbf{n}=(n+2,n)$ is trivial
since $\underline{s}(n+2,n)=0$.
\item[(ii)]
 Let $\mathbf{n}'=(m,n-2)$ and $\mathbf{n}=(m,n)$. If
 $\sigma_{\underline{s}(m,n-2)}(X_{\mathbf{n}'}^{\mathbf{a}})$ has the
 expected dimension, then
 $\sigma_{\underline{s}(m,n)}(X_{\mathbf{n}}^{\mathbf{a}})$ has also the
 expected dimension.
\end{itemize}

Claim (i) can be proved by an inductive approach that
specializes a certain number of points on a subvariety of $\P^m \times
\P^n$ of the form $\P^{m'} \times \P^n$ (see Section~\ref{sec:splittingtheorem} for more details).
Note that a similar approach was successfully
applied to study secant varieties of Segre varieties (see for example
\cite{AOP}).

The proof of (ii) relies on a different specialization
technique which allows to place a certain number of points on a two-codimensional
subvariety of $\P^m \times \P^n$ of the form $\P^m \times \P^{n-2}$ (see Section~\ref{sec:subabundant} for more details).
This approach can be regarded as a modification of the approach introduced in~\cite{BO} that simplifies
the proof of the Alexander-Hirschowitz theorem for cubic Veronese varieties.
We also would like to mention that the same approach
was extended to secant varieties of Grassmannians of planes in~\cite{AOP2}.

In Section~\ref{sec:superabundant}, we will modify  the above techniques
to prove the following theorem:
 \begin{theorem}
Let $\mathbf{n} = (m,n)$ and let $\mathbf{a}=(1,2)$ and let
\[
 \bar{s}(m,n)=
 \left\{
 \begin{array}{ll}
(m+1) \left\lfloor n/2 \right\rfloor +1 & \mbox{if $n$ is even;}  \\
(m+1)\left\lfloor n/2 \right\rfloor+3 & \mbox{otherwise.}
 \end{array}
 \right.
\]
Then $\sigma_s(X_{\mathbf{n}}^{\mathbf{a}})$ has the expected dimension
for any $s \geq \bar{s}(m,n)$.
 \label{th:mainintro'}
 \end{theorem}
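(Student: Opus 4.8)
The plan is to convert the statement into a cohomological vanishing for a single, smallest value of $s$, and then to establish that vanishing by a double induction on $m$ and $n$ that runs the specialization machinery of Theorem~\ref{th:mainintro} in the superabundant regime.

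First I would reduce to $s=\bar{s}(m,n)$. By the equivalence recalled in Section~\ref{sec:splittingtheorem}, $\dim\sigma_s(X_{\mathbf{n}}^{\mathbf{a}})=h_{\P^{\mathbf{n}}}(Z,\mathbf{a})-1$ for $Z$ a general union of $s$ double points in $\P^m\times\P^n$, and a direct computation shows that already $s=\bar{s}(m,n)$ satisfies $s(m+n+1)\ge\dim H^0(\P^{\mathbf{n}},\cO(1,2))=(m+1)\binom{n+2}{2}$, so the expected dimension for every $s\ge\bar{s}(m,n)$ is $N=\dim H^0(\cO(1,2))-1$. Thus the theorem amounts to the vanishing
\[
H^0\bigl(\P^m\times\P^n,\sI_Z(1,2)\bigr)=0
\]
for $Z$ a general union of $\bar{s}(m,n)$ double points; and it suffices to treat this one case, since for $s>\bar{s}(m,n)$ and $Z'\supseteq Z$ one has $H^0(\sI_{Z'}(1,2))\subseteq H^0(\sI_Z(1,2))=0$, and by semicontinuity it is enough to exhibit a single special configuration with no sections.

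Next I would set up the induction, using the two kinds of specialization from the proof of Theorem~\ref{th:mainintro}. When $m$ is large compared with $n$, I would specialize $a$ of the double points so that their supports lie on a divisor $D_0=\P^{m-1}\times\P^n$ of class $\cO(1,0)$; the associated exact sequence
\[
0\to\sI_{\widetilde{Z}}(0,2)\to\sI_Z(1,2)\to\sI_{Z\cap D_0,\,D_0}(1,2)\to 0
\]
reduces the problem to the vanishing of $H^0(\sI_{\widetilde{Z}}(0,2))$, which is a vanishing for quadrics on $\P^n$ and holds once $a$ is at least $\binom{n+2}{2}$, together with the vanishing of $H^0(\sI_{Z\cap D_0,\,D_0}(1,2))$ on $\P^{m-1}\times\P^n$, which is the inductive hypothesis in $m$. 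When $m$ is small compared with $n$, I would instead specialize a suitable number of the double points onto the codimension-two subvariety $D=\P^m\times\P^{n-2}$ and run the differential Horace method exactly as in the proof of claim~(ii): distributing the points among general points off $D$, points placed on $D$, and ``differential'' points on $D$ with a prescribed first-order direction, one obtains a chain of restriction and residuation exact sequences whose only essential terms are the analogous vanishing on $\P^m\times\P^{n-2}$ (the inductive hypothesis for $(m,n-2)$) together with $\cO(1,1)$- and $\cO(1,0)$-twisted residual schemes, the former on the Segre variety $\P^m\times\P^{n-1}$. These $\cO(1,1)$-terms are handled either by the classical non-defectiveness of Segre varieties in the superabundant range or by arranging the specialization so that the offending double points fall entirely onto $D$, so that the surviving $(1,1)$-contribution comes from a residual scheme with no sections. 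Alternating the two reductions according to which regime $(m,n)$ lies in decreases $m+n$, and the induction terminates at a short list of small base cases, to be checked directly.

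The step I expect to be the main obstacle is the exact arithmetic of the specialization. At every node of the (possibly long) chain of exact sequences one needs a scheme with \emph{no} sections and no slack, so the number of points sent to each subvariety, and the number of differential points among them, must be chosen so that all the resulting inequalities hold simultaneously and tightly; the fact that the correction term in $\bar{s}(m,n)$ changes with the parity of $n$ is a direct reflection of this delicacy, and making the count exact---rather than asymptotic---near the boundary $m\approx n$ is where the real work lies. Secondary difficulties are verifying the collision (degeneration) lemmas that justify the differential Horace specialization, and checking that the auxiliary quadric problems on $\P^n$ and $\cO(1,1)$-problems on $\P^m\times\P^{n-1}$ that appear after cutting are themselves non-defective in the relevant ranges---both of which can fail if too few points are placed on the subvariety, so they feed back into the choice of the split. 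Once a workable split is found and these lemmas are in place, the induction closes and semicontinuity completes the proof.
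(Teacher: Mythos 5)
Your opening reduction is exactly the paper's: since $(m,n;1,2;\bar{s}(m,n))$ is superabundant, it suffices to prove $H^0(\P^m\times\P^n,\sI_Z(1,2))=0$ for a general (hence some special) union $Z$ of $\bar{s}(m,n)$ double points, and monotonicity plus semicontinuity handle all larger $s$. The gap is in the inductive engine for the step $(m,n)\to(m,n-2)$. Your differential-Horace specialization onto $\P^m\times\P^{n-2}$ necessarily produces intermediate residual schemes in bidegree $(1,1)$ on $\P^m\times\P^{n-1}$, and you propose to kill these by ``the classical non-defectiveness of Segre varieties in the superabundant range.'' That statement is false: under the Segre embedding, $\sigma_s(\P^a\times\P^b)$ is the locus of matrices of rank $\le s$ and fails to fill its ambient space for every $s\le\min(a,b)$, which includes a whole interval of superabundant values of $s$. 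This is precisely the obstruction the introduction of the paper singles out as the reason a naive Horace induction cannot be run in bidegree $(1,2)$, and your fallback (``arrange that the offending double points fall entirely onto $D$'') is a hope, not a mechanism.

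The paper's way around this is structurally different: it adjoins the codimension-two subvariety $L=\P(V)\times\P(U_L)$ itself (and a second one, $M$) to the zero-dimensional scheme, i.e.\ it works with $H^0(\sI_{Z\cup L}(1,2))$ via the auxiliary statements $\overline{R}(m,n)$ and $Q(m,n)$ and the sequence $0\to\sI_{Z\cup L}\to\sI_Z\to\sI_{Z\cap L,L}\to0$. The vanishing $Q(m,n)$ is then proved not by residuation in the $W$-factor but by a direct-sum splitting in the $V$-factor (Lemma~\ref{th:splitting4}), reducing to $Q(1,n)$ and $Q(2,n)$, which are checked by explicit computation; a $(1,1)$-computation does appear, but only in base cases where the scheme contains $L$ and a handful of points and is verified by hand, never as a general secant statement for Segre varieties. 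Two further points: your large-$m$ branch (residuation by a $(1,0)$-divisor, leaving a quadric problem on $\P^n$) is a genuinely different idea from anything in the paper, but it requires $\bar{s}(m,n)\ge\binom{n+2}{2}$, i.e.\ roughly $m\ge n+2$, so it cannot cover the intermediate range where the $(1,1)$ problem bites; and the base cases are heavier than ``a short list to be checked directly''---the paper needs $\overline{R}(m,2)$ and $\overline{R}(m,3)$ for $3\le m\le 7$ (done in {\tt Macaulay2}), the unbalanced classification for $m\ge 8$, and $T(m,1;1,2;3)$ for all $m$ via the splitting theorem.
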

Theorems~\ref{th:mainintro} and~\ref{th:mainintro'} complete
the classification of defective  Segre-Veronese varieties $X_{m,n}^{1,2}$ for $m=1,2$.
To be more precise,  the following is  an immediate consequence of
these theorems:
\begin{corollary}
Let $\mathbf{n}=(m,n)$ and let $\mathbf{a}=(1,2)$.
\begin{itemize}
\item[(i)] If $m=1$, then $\sigma_s(X_{\mathbf{n}}^{\mathbf{a}})$ has the expected
dimension for any $s$.
\item[(ii)]
If $m=2$, then
$\sigma_s(X_{\mathbf{n}}^{\mathbf{a}})$ has the expected dimension
unless $(n,s)=(2k+1,3k+2)$ with $k \geq 1$.
\end{itemize}
\label{cor:2n}
\end{corollary}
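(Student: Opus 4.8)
The plan is to obtain the corollary by intersecting the two ranges in which non-defectivity is already established. The sub-abundant estimate of the paper says that, whenever $m \le n+2$, the secant variety $\sigma_{\underline s(m,n)}(X_{\mathbf{n}}^{\mathbf{a}})$ has the expected dimension, and hence so does $\sigma_s(X_{\mathbf{n}}^{\mathbf{a}})$ for every $s \le \underline s(m,n)$ — the same passage used to deduce the Corollary to Theorem~\ref{th:mainintro}, since removing a double point from a general collection cannot raise $h^1(\sI_Z(\mathbf{a}))$. Theorem~\ref{th:mainintro'} gives the expected dimension for every $s \ge \bar s(m,n)$. Since $m \in \{1,2\}$ automatically satisfies $m \le n+2$, the only values of $s$ not covered by these two statements are the integers in the open interval $\bigl(\underline s(m,n),\,\bar s(m,n)\bigr)$, so the argument reduces to evaluating $\underline s$ and $\bar s$ for $m = 1,2$.

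For $m=1$ the formulas give $\underline s(1,n) = 2\lfloor n/2\rfloor + 1 = n+1$ when $n$ is even and $\underline s(1,n) = 2\lfloor n/2\rfloor + 2 = n+1$ when $n$ is odd, while $\bar s(1,n) = n+1$ for $n$ even and $\bar s(1,n) = n+2$ for $n$ odd. In both parities $\bar s(1,n) \le \underline s(1,n)+1$, so $\bigl(\underline s(1,n),\bar s(1,n)\bigr)$ contains no integer and $\sigma_s(X_{\mathbf{n}}^{\mathbf{a}})$ has the expected dimension for every $s$; this is part~(i).

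For $m = 2$ and $n = 2j$ even, $\underline s(2,2j) = 3j$ and $\bar s(2,2j) = 3j+1$, so the interval is again empty. For $m = 2$ and $n = 2k+1$ odd the ``$m$ even, $n$ odd'' branch gives $\underline s(2,2k+1) = 3k+1$ and $\bar s(2,2k+1) = 3k+3$, so the only uncovered value is $s = 3k+2$. Hence $\sigma_s(X_{\mathbf{n}}^{\mathbf{a}})$ has the expected dimension for all $(n,s)$ with $m=2$ except possibly $(n,s) = (2k+1,3k+2)$, $k \ge 0$. The case $k = 0$, i.e.\ $(n,s) = (1,2)$, lies strictly between $\underline s(2,1) = 1$ and $\bar s(2,1) = 3$ and has to be checked by hand: for $\P^2\times\P^1\subset\P^8$ every section of $\cO(1,2)$ singular at a general double point $Z'$ is divisible by the linear form cutting out the divisor $\P^2\times\{q\}$ through $Z'$, hence equals that form times a $(1,1)$-form vanishing at $Z'$; imposing a second general double point contributes four further independent conditions, and one gets $h^0(\sI_Z(1,2)) = 1$ for a general pair $Z$ of double points, i.e.\ $\sigma_2$ has the expected dimension $7$ (this also follows from the study of $\P^1\times\P^2$, e.g.\ \cite{CGG}). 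So $(1,2)$ is not exceptional, which is why part~(ii) excludes $k = 0$.

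It remains to justify the word ``unless'', i.e.\ to show that for $k \ge 1$ the variety $\sigma_{3k+2}(X_{\mathbf{n}}^{\mathbf{a}})$ with $\mathbf{n} = (2,2k+1)$ is genuinely defective. Here $\dim H^0(\P^2\times\P^{2k+1},\cO(1,2)) = 3\binom{2k+3}{2}$ and the degree $(3k+2)(2k+4)$ of a general union $Z$ of $3k+2$ double points differ by $k-1 \ge 0$, so the expected value of $h^0(\sI_Z(1,2))$ is $0$ and defectivity means exhibiting a nonzero section of $\cO(1,2)$ singular along $Z$. Writing such a section as $\sum_{j=0}^2 x_j q_j(y)$ and the points of $Z$ as supported at $(P_i,Q_i)\in\P^2\times\P^{2k+1}$, singularity along $Z$ forces each $q_j$ to vanish at $Q_1,\dots,Q_{3k+2}$ and, in addition, forces the quadric $\sum_j (P_i)_j q_j$ on $\P^{2k+1}$ to be singular at $Q_i$ for every $i$; for $k\ge 1$ the number of these conditions equals the number of free coefficients, but they fail to be independent — there is always a relation among them — so $h^0(\sI_Z(1,2))\ge 1$. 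Producing this unexpected section explicitly (already for $k=1$ it can be built starting from a twisted cubic through the five points $Q_i$) is the one step that is genuine work, and I expect it to be the main obstacle; with it in hand, parts~(i) and~(ii) both follow.
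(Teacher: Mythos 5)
Your argument for the positive direction is exactly the paper's: combine Theorem~\ref{th:main} (all $s\le\underline{s}(m,n)$, valid since $m\le n+2$ is automatic for $m\in\{1,2\}$) with Theorem~\ref{th:mainsuper} (all $s\ge\bar{s}(m,n)$), evaluate the two bounds, and observe that the gap is empty for $m=1$ and for $m=2$, $n$ even, and consists of the single value $s=3k+2$ for $m=2$, $n=2k+1$. Your arithmetic is right (indeed $\underline{s}(2,2j)=3j$, which is what the formula gives, even though the paper's own proof writes $3j+1$; the interval is empty either way). The one leftover case $(m,n,s)=(2,1,2)$ you settle by the divisibility argument for sections of $\cO(1,2)$ on $\P^2\times\P^1$ singular at a double point; that computation is correct and is a reasonable substitute for the paper's route via Example~\ref{th:example1}, which records $T(2,1;1,2;2)$ as the case $n=1$ of $T(n+1,n;1,2;\lfloor(n+1)/2\rfloor+1)$.

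The only real shortfall is your last paragraph. The claim that the $(3k+2)(2k+4)$ conditions imposed by $Z$ ``fail to be independent --- there is always a relation among them'' is an assertion, not an argument, and you correctly identify that you have not produced the unexpected section. But this is not a step you need to invent: the defectivity of $\sigma_{3k+2}(X_{2,2k+1})$ is a known result, proved in \cite{CaCh} and \cite{Ott} (the latter by a Strassen-type argument: the contraction operator $S_\phi$ is a skew-symmetric matrix of order $3(2k+2)$ whose rank is at most $2s$ on the $s$-secant variety, so $s=3k+2$ cannot fill), and the paper handles this half of the classification purely by citation, with the pfaffian argument sketched in Remark~\ref{rem:defective}~(i). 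Replace your heuristic with that citation and the proof is complete; as written, the ``unless'' direction is not established.
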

Note that (i) is well-known, see for example~\cite{CaCh}. We also
mention that Theorem 1.3 of \cite{BD} gives a complete
classification of the case $m=1, n=2$ for any degree
$\mathbf{a}=(d_1,d_2)$, where $d_1,d_2\geq1$. On the other hand,
to our best knowledge, (ii) was previously  unknown. The
defectiveness of  the $(3k+2)^{\mathrm{nd}}$ secant variety of
$X_{2,2k+1}^{1,2}$  has already been established (see \cite{CaCh,
Ott} for the proofs). Thus Corollary~\ref{cor:2n} (ii) completes
the classification of defective secant varieties of
$X_{2,n}^{1,2}$.

In Section~\ref{sec:conjecture},
we will give a conjecturally complete list of defective secant varieties
of $X_{m,n}^{1,2}$.  Evidence for the conjecture was provided by
results in \cite{CGG, CaCh, Ott}.
Further evidence in support of the conjecture was obtained via the
computational experiments we carried out.
Thus the first part of this section will be devoted to explaining
these experiments, which were done with the computer algebra system
{\tt Macaulay2} developed by Dan Grayson and Mike
Stillman~\cite{GS}.
The proofs of Lemmas~\ref{th:m1and2} and~\ref{th:Rbarmn} are also
based on computations in {\tt Macaulay2}.
All the {\tt Macaulay2} scripts needed to make these computations
are available at
{\tt http://www.webpages.uidaho.edu/\~{ }abo/programs.html}.
\section{Splitting Theorem}
\label{sec:splittingtheorem}
Let $V$ be an $(m+1)$-dimensional vector space over $\mathbb{C}$ and let $W$ be an $(n+1)$-dimensional vector space over $\mathbb{C}$.
For simplicity, we write $\P^{m,n}$ for   $\P^m \times \P^n=\P(V)\times \P(W)$.
In this section,  we indicate by $X_{m,n}$ for the Segre-Veronese variety
$\P^{m,n}$ embedded by the morphism $\nu_{1,d}$ given by $\cO(1,d)$ for simplicity.
Let $T_p(X_{m,n})$ be the affine cone over the tangent space  $\T_p(X_{m,n})$
to $X_{m,n}$ at a point $p \in X_{m,n}$.

For each  $p \in X_{m,n}$, there are two vectors $u \in V \setminus \{0\}$
and $v \in W \setminus \{0\}$, such that $p= [u \otimes v^d] \in  \P(V \otimes S_d(W))$.
In this way,  $p$ can be identified with $([u], [v]) \in \P^{m,n}$ through $\nu_{1,d}$.
Thus $p$ is also denoted by $([u], [v])$.
Let $p =[u \otimes v^d] \in X_{m,n}$. Then $T_p(X_{m,n}) = V \otimes v^d + u \otimes v^{d-1}W$.
We denote by  $Y_p(X_{m,n})$ (or just by $Y_p$) the $(m+1)$-dimensional subspace $V \otimes v^d$ of $V \otimes S_d(W)$.
\begin{definition}
Let $p_1, \dots, p_s, q_1, \dots, q_t$ be
general points of $X_{m,n}$ and let $U_{m,n}(s,t)$
be
the subspace of $V \otimes S_d(W)$ spanned by $\sum_{i=1}^s T_{p_i}(X_{m,n})$ and $\sum_{i=1}^t Y_{q_i}(X_{m,n})$.
Then $U_{m,n}(s,t)$
is expected to have dimension
\[
 \min \left\{ s(m+n+1)+t(m+1), (m+1){n+d \choose d} \right\}.
\]
We say that {\it $S(m,n;1,d;s;t)$ is true} if $U_{m,n}(s,t)$ has the expected dimension.
For simplicity, we denote $S(m,n;1,d;s;0)$ by $T(m,n;1,d;s)$.
\end{definition}
Note that $U_{m,n}(s,0)$ is the affine cone of $\sigma_s(X_{m,n})$.
\begin{remark}
Let $q_1, \dots, q_t$ be general points of $X_{m,n}$ and let $\sigma_s(X_{m,n})$ be the $s^{\mathrm{th}}$
secant variety of $X_{m,n}$.
By Terracini's lemma~\cite{terracini}, the span of the tangent spaces to
$X_{m,n}$ at $s$ generic points  is equal to the tangent space to $\sigma_s(X_{m,n})$
at the generic $z$ point in the linear subspace spanned by the $s$ points.
Thus the vector space $U_{m,n}(s,t)$ can be thought of as
the affine cone over the tangent space to the join $J(\P(Y_{q_1}), \dots,  \P(Y_{q_t}),\sigma_s(X_{m,n}))$
of $\P(Y_{q_1}), \dots,  \P(Y_{q_t})$ and $\sigma_s(X_{m,n})$
at a general point in the linear subspace spanned by $q_1, \dots, q_t$ and $z$.  Therefore,
$S(m,n;1,d;s;t)$ is true if and only if $J( \P(Y_{q_1}), \dots,  \P(Y_{q_t}),\sigma_s(X_{m,n}))$
has the expected dimension.
In particular, $\sigma_s(X_{m,n})$ has the expected dimension if and only if
$S(m,n;1,d;s;0)$ is true.
\label{th:terracini}
\end{remark}
\begin{remark}
Let $N=(m+1){n+d \choose d}$.
Then $H^0(\P^{m,n}, \cO(1,d))$ can be identified with the set of hyperplanes in $\P^{N-1}$.
Since the condition that a hyperplane $H \subset \P^N$ contains $\T_p(X_{m,n})$
is equivalent to the condition that $H$ intersects $X_{m,n}$ in the first infinitesimal neighborhood of $p$,
the elements of $H^0(\P^{m,n}, \sI_{p^2}(1,d))$ can be viewed as hyperplanes containing $\T_p(X_{m,n})$.
Let $q \in X_{m,n}$.  A similar argument shows that  the elements of $H^0(\P^{m,n}, \sI_{q^2|_{\P(Y_q)}}(1,d))$
can be identified with  hyperplanes containing $Y_q$, where $q^2|_{\P(Y_q)}$ is a zero-dimensional subscheme
of $X_{m,n}$ of length $m+1$.

Let $p_1, \dots, p_s, q_1, \dots, q_t \in X_{m,n}$ and
let $Z=\{p_1^2, \dots, p_s^2, q_1^2|_{\P(Y_{q_1})}, \dots, q_t^2|_{\P(Y_{q_t})}\}$.
Recall that Terracini's lemma says that
the linear subspace spanned by $\T_{p_1}(X_{m,n}), \dots, \T_{p_s}(X_{m,n})$ is the tangent
space to $\sigma_s(X_{m,n})$ at a general point in the linear subspace spanned
by $p_1, \dots, p_s$. This implies that $\dim  J(\P(Y_{q_1}), \dots,  \P(Y_{q_t}),\sigma_s(X_{m,n}))$
equals the value of  the Hilbert function
$h_{\P^{m,n}}(Z, \cdot )$ of $Z$ at $(1,d)$, i.e.,
\[
h_{\P^{m,n}}(Z,(1,d))=\dim H^0(\P^{m,n}, \cO(1,d))-
\dim H^0(\P^{m,n}, \sI_Z(1,d)).
\]
In particular,
\[
h_{\P^{m,n}}(Z,(1,d))= \min \left\{
 s(m+n+1)+t(m+1), \ N
 \right\}.
\]
if and only if $S(m,n;1,d;s;t)$ is true.
\label{th:fatPoints}
\end{remark}
\begin{definition}
\label{th:abundance}
A six-tuple $(m,n;1,d;s;t)$ is called {\it subabundant} (resp. {\it superabundant})
\[
 s(m+n+1)+t(m+1) \leq  (m+1){n+d \choose d} \ (\mbox{resp. $\geq$}).
\]
We say that  $(m,n;1,d;s;t)$  is  {\it equiabundant} if it is both subabundant and superabundant.
For brevity, we will write the five-tuple $(m,n;1,d;s)$ instead of the six-tuple $(m,n;1,d;s;0)$.
\end{definition}
Assume that $S(m,n;1,d;s;t)$ is true.
Note that when $(m,n;1,d;s;t)$ is superabundant,  $U_{m,n}(s,t)$ coincides with the whole space $V \otimes S_d(W)$, whereas
for subabundant $(m,n;1,d;s;t)$, $U_{m,n}(s,t)$ can be a proper subspace of the whole space.
\begin{remark}
Given two vectors $(s,t)$ and $(s',t')$, we say that $(s,t) \geq (s',t')$ if $s \geq s'$ and $t \geq t$.
Suppose that  $S(m,n;1,2;s;t)$ is true and that $(m,n;1,2;s;t)$ is subabundant (resp. superabundant).
Then $S(m,n;1,2;s';t')$ is true for any choice of $s'$ and $t'$ with $(s,t) \geq (s',t')$ (resp. with $(s,t) \leq (s',t')$).
\end{remark}
\begin{remark}\label{easyremark}
Suppose that $m=0$. We make the following simple remarks:
\begin{itemize}
\item[(i)] Let $q \in X_{0,m}$.
Then $\P(Y_q(X_{0,n}))$ is just $q$ itself.
If $q_1, \dots, q_t$ are general points of $X_{0,n}$
and if $(0,n;1,d;s;t)$ is subabundant, then
$S(0,n;1,d;s;t)$ is true if and only if $T(0,n;1,d;s)$ is true.
\item[(ii)] By the Alexander-Hirschowitz theorem, \cite{AH}, we know that
$T(0,n;1,d;n+1)$ is true. Then if $(0,n;1,d;s)$ is superabundant and if $s \geq n+1$, then
$T(0,n;1,d;s)$ is true.
\end{itemize}
\end{remark}
\begin{theorem}
Let $m=m'+m''+1$ and let $s=s'+s''$. If $(m',n;1,d;s';s''+t)$ and $(m'',n;1,d;s'';s'+t)$ are
subabundant (resp. superabundant, resp. equiabundant) and if
$S(m',n;1,d;s';s''+t)$ and $S(m'',n;1,d;s'';s'+t)$ are true, then $(m,n;1,d;s;t)$ is subabundant
(resp. superabundant, resp. equiabundant) and $S(m,n;1,d;s;t)$ is true.
\label{th:splitting}
\end{theorem}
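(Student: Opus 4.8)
The plan is to prove this by a single specialization, reducing the dimension count for $U_{m,n}(s,t)$ to the two hypotheses $S(m',n;1,d;s';s''+t)$ and $S(m'',n;1,d;s'';s'+t)$, and then passing from the resulting special configuration to a general one by the lower semicontinuity of the dimension of a linear span. Since $m+1=(m'+1)+(m''+1)$, I would fix a decomposition $V=V'\oplus V''$ with $\dim V'=m'+1$ and $\dim V''=m''+1$, inducing $V\otimes S_d(W)=\bigl(V'\otimes S_d(W)\bigr)\oplus\bigl(V''\otimes S_d(W)\bigr)$ with projections $\pi'$ and $\pi''$, together with the identifications $\P(V')\times\P(W)=X_{m',n}$ and $\P(V'')\times\P(W)=X_{m'',n}$ (embedded by $\cO(1,d)$ into $\P(V'\otimes S_d(W))$ and $\P(V''\otimes S_d(W))$). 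For the special configuration I take $s'$ general points $p_1,\dots,p_{s'}$ on $X_{m',n}$, $s''$ general points $p_{s'+1},\dots,p_s$ on $X_{m'',n}$, and $t$ general points $q_1,\dots,q_t$ on $X_{m,n}$, and let $U$ denote the span of the corresponding spaces $T_{p_i}(X_{m,n})$ and $Y_{q_k}(X_{m,n})$.

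The first step is to describe how each of these building blocks meets the two summands. Write $p_i=([u_i],[v_i])$, so $u_i\in V'$ for $i\le s'$ and $u_i\in V''$ for $i>s'$, and $q_k=([u'_k],[w_k])$. Using $T_p(X_{m,n})=V\otimes v^d+u\otimes v^{d-1}W$ and $Y_q(X_{m,n})=V\otimes v^d$, I would verify: for $i\le s'$, the projection $\pi'\bigl(T_{p_i}(X_{m,n})\bigr)$ is the full space $T_{p_i}(X_{m',n})$, while $T_{p_i}(X_{m,n})\cap\bigl(V''\otimes S_d(W)\bigr)=V''\otimes v_i^d$, a $Y$-space of $X_{m'',n}$; symmetrically, for $i>s'$, $T_{p_i}(X_{m,n})\cap\bigl(V''\otimes S_d(W)\bigr)=T_{p_i}(X_{m'',n})$ and $\pi'\bigl(T_{p_i}(X_{m,n})\bigr)=V'\otimes v_i^d$, a $Y$-space of $X_{m',n}$; and for each $k$, $\pi'\bigl(Y_{q_k}(X_{m,n})\bigr)=V'\otimes w_k^d$ and $Y_{q_k}(X_{m,n})\cap\bigl(V''\otimes S_d(W)\bigr)=V''\otimes w_k^d$ are $Y$-spaces of $X_{m',n}$ and $X_{m'',n}$. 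The only non-formal point is that $T_{p_i}(X_{m,n})\cap\bigl(V''\otimes S_d(W)\bigr)$ is \emph{no larger} than $V''\otimes v_i^d$ when $i\le s'$, which follows because an equality $a'\otimes v_i^d+u_i\otimes v_i^{d-1}b=0$ in $V'\otimes S_d(W)$ forces $a'\in\C u_i$ and $b\in\C v_i$.

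Assembling these, $\pi'(U)=U_{m',n}(s',s''+t)$ for the tangent points $p_1,\dots,p_{s'}$ and the $s''+t$ $Y$-points coming from $p_{s'+1},\dots,p_s$ and $q_1,\dots,q_t$, while $U\cap\bigl(V''\otimes S_d(W)\bigr)$ contains $U_{m'',n}(s'',s'+t)$ for the tangent points $p_{s'+1},\dots,p_s$ and the $s'+t$ $Y$-points coming from $p_1,\dots,p_{s'}$ and $q_1,\dots,q_t$. Granting that these induced configurations are generic enough to apply the hypotheses—here one uses that a $Y$-space depends only on the $\P(W)$-coordinate of its point—one gets the expected values for $\dim\pi'(U)$ and $\dim\bigl(U\cap(V''\otimes S_d(W))\bigr)$. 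Since $\dim U=\dim\pi'(U)+\dim\bigl(U\cap\ker\pi'\bigr)=\dim\pi'(U)+\dim\bigl(U\cap(V''\otimes S_d(W))\bigr)$, and since $m+n+1=(m'+n+1)+(m''+1)=(m'+1)+(m''+n+1)$ and $m+1=(m'+1)+(m''+1)$, the two expected values add up as wanted: in the subabundant case this gives $\dim U\ge s(m+n+1)+t(m+1)$, hence equality since $\dim U\le s(m+n+1)+t(m+1)$ always holds, and the subabundance of $(m,n;1,d;s;t)$ is the sum of the two subabundance inequalities; in the superabundant case one gets $\pi'(U)=V'\otimes S_d(W)$ and $U\supseteq V''\otimes S_d(W)$, whence $U=V\otimes S_d(W)$, and the superabundance of $(m,n;1,d;s;t)$ is again the sum of the two; the equiabundant case is the conjunction. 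In all cases the special configuration attains $\min\{s(m+n+1)+t(m+1),(m+1){n+d \choose d}\}$, so by lower semicontinuity of the span dimension a general configuration does too, i.e., $S(m,n;1,d;s;t)$ is true.

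The step I expect to be the main obstacle is the first one: pinning down $T_{p}(X_{m,n})\cap\bigl(V''\otimes S_d(W)\bigr)$ and $\pi'\bigl(T_{p}(X_{m,n})\bigr)$ exactly—that the intersection is a \emph{full} $T_{p}(X_{m'',n})$ (resp. a full $Y$-space), not something of smaller dimension, and that $\pi'$ is onto the expected space—together with the bookkeeping that matches the $Y$-point counts $s''+t$ and $s'+t$ and confirms that the induced configurations on $X_{m',n}$ and $X_{m'',n}$ lie in the dense open loci on which $S(m',n;\cdots)$ and $S(m'',n;\cdots)$ hold.
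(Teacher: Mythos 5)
Your proposal is correct and follows essentially the same route as the paper: the same splitting $V=V'\oplus V''$, the same specialization of the $s'$ and $s''$ tangent points onto $X_{m',n}$ and $X_{m'',n}$, and the same observation that each $T_{p}(X_{m,n})$ and $Y_{q}(X_{m,n})$ decomposes into a $V'$-piece and a $V''$-piece, so that the total span is $U_{m',n}(s',s''+t)\oplus U_{m'',n}(s'',s'+t)$. The only difference is cosmetic: you phrase the count via $\dim U=\dim\pi'(U)+\dim\bigl(U\cap\ker\pi'\bigr)$ where the paper states the direct-sum decomposition outright, and you make explicit the semicontinuity step and the genericity of the induced $Y$-configurations, which the paper leaves implicit.
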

\begin{proof}
Here we only prove the theorem in the case where  $(m',n;1,d;s';s''+t)$ and $(m'',n;1,d;s'';s'+t)$ are both subabundant,
because the remaining cases can be proved in a similar manner.
Let $V'$ and $V''$ be subspaces of $V$ of dimensions $m'+1$ and $m''+1$ respectively.
Suppose that $V$ is the direct sum of $V'$ and $V''$.
Let $p = [u \otimes v^d] \in X_{m,n}$. If $u \in V'$, then  we have
\begin{eqnarray*}
 T_p(X_{m,n})
 &=& V \otimes v^d + u \otimes v^{d-1}W \\
 &=&( V' \otimes v^d + u \otimes v^{d-1}W) \oplus (V'' \otimes v^d) \\
 &=& T_p (X_{m',n}) \oplus Y_{p''}(X_{m'',n})
\end{eqnarray*}
for some $p'' \in X_{m'',n}$ ($p''$ must be of the form $[u'' \otimes v^d]$ with $u'' \in V''$).
Similarly, one can prove that if $u \in V''$, then $T_p(X_{m,n}) = Y_{p'} (X_{m',n}) \oplus T_p(X_{m'',n})$
for some $p' \in X_{m',n}$.

Let $q=[u'\otimes v'^d] \in X_{m,n}$. Then there exist $q' \in X_{m',n}$ and $q'' \in X_{m'',n}$ such that
\begin{eqnarray*}
Y_q(X_{m,n})
&=&
 V \otimes v'^d \\
&=&
(V' \otimes v'^d) \oplus (V'' \otimes v'^d) \\
&=& Y_{q'}(X_{m',n}) \oplus Y_{q''}(X_{m'',n}).
\end{eqnarray*}
Thus one can conclude that $U_{m,n}(s,t) \simeq U_{m',n}(s',s''+t) \oplus U_{m'',n}(s'',s'+t)$.
By assumption,  $\dim U_{m',n}(s',s''+t) = s'(m'+n+1)+(s''+t)(m'+1)$ and $\dim U_{m'',n}(s'',s'+t) = s''(m''+n+1)+(s'+t)(m''+1)$.
Thus $\dim U_{m,n}(s,t)=\dim U_{m',n}(s',s''+t)+\dim U_{m'',n}(s'',s'+t)= s(m+n+1)+t(m+1) \leq
(m'+1){n+d \choose d}+(m''+1){n+d \choose d}=(m+1) {n+d \choose d}$, and hence
$(m,n;1,d;s,t)$ is subabundant and $S(m,n;1,d;s;t)$ is true
\end{proof}
 We will discuss three examples to illustrate how to use  Theorem~\ref{th:splitting} below.
 These examples will be used in later sections.
\begin{example}
In this example, we apply Theorem~\ref{th:splitting} to prove that $T(2,2;1,2;s)$ is true
for every $s \leq 3$.
Note that $(2,2;1,2;s)$ is subabundant for $s\leq 3$. Thus it suffices to
show that $T(2,2;1,2;3)$ is true. 
Taking $m'=1, m''=0$ and $s'=2, s''=1$,
one can reduce $ T(2,2;1,2;3)$ to $S(1,2;1,2;2;1)$ and $S(0,2;1,2;1;2)$.
Indeed $(1,2;1,2;2;1)$ and $(0,2;1,2;1;2)$ are both
subabundant.
The statement $S(1,2;1,2;2;1)$ can be reduced again to twice $S(0,2;1,2;1;2)$ by 
taking $m'=m''=0$ and $s'=s''=1$.
This means that $ T(2,2;1,2;3)$ is reduced to triple $S(0,2;1,2;1;2)$.
Clearly  $S(0,2;1,2;1;0)$ is true, and so is $S(0,2;1,2;1;2)$ by Remark \ref{easyremark} (i).
Hence we completed the proof.
\label{th:(2,2;1,2;s)}
\end{example}
\begin{example}
We prove that $T(m,1;1,2;3)$ is true for any $m$.  The proof is by induction.
It has been already proved that $T(1,1;1,2;3)$ is true (see~\cite{CGG}).
Suppose now that $T(m-1,1;1,2;3)$ is true for some $m$.
Note that $(m,1;1,2;3)$ is superabundant. Since $(m-1,1;1,2;3;0)$ and $(0,1;1,2;0;3)$ are also
superabundant, we can reduce $T(m,1;1,2;3)$ to $T(m-1,1;1,2;3)$ and $S(0,1;1,2;0;3)$.
Clearly, $S(0,1;1,2;0;3)$ is true, by Remark \ref{easyremark} (i).
Since $T(m-1,1;1,2;3)$
is true by induction hypothesis, $T(m,1;1,2;3)$ is also true.
\label{th:(m,1;1,2;3)}
\end{example}
\begin{example}
Here we prove that  $T(n+1,n;1,2;s)$ is true for any $s \leq \left\lfloor \frac{n+1}{2}\right\rfloor+1$ and any $n\geq1$.
Note that $(n+1,n;1,2;s)$ is subabundant for such an $s$. Thus it is sufficient to prove that
$T(n+1,n;1,2;s)$ is true if $s = \left\lfloor \frac{n+1}{2}\right\rfloor+1$.

First suppose that $n$ is even, i.e., $n=2k$ for some integer $k\geq1$. Then $s=k+1$.
Since $(2k,2k;1,2;k;1)$ and $(0,2k;1,2;1;k)$ are both subabundant, then
$T(2k+1,2k;1,2;k+1)$ can be reduced to $S(2k,2k;1,2;k;1)$ and
$S(0,2k;1,2;1;k)$. Analogously, $S(2k,2k;1,2;k;1)$ can be reduced to
$S(2k-1,2k;1,2;k-1;2)$ and $S(0,2k;1,2;1;k)$. That means $T(2k+1,2k;1,2;k+1)$
is now reduced to $S(2k-1,2k;1,2;k-1;2)$ and twice $S(0,2k;1,2;1;k)$
(we will denote it by $2*S(0,2k;1,2;1;k)$).
 We can repeat the same process $(k-2)$ times to reduce
 $T(2k+1,2k;1,2;k+1)$ to $S(k,2k;1,2;0;k+1)$ and $(k+1)*S(0,2k;1,2;1;k)$.
Indeed we have only to check that $(2k+1-h,2k;1,2;k+1-h;h)$ is subabundant for any $1\leq h\leq k+1$, which is true.
 Now the statement $S(k,2k;1,2;0;k+1)$ can be reduced to $S(k-1,2k;1,2;0;k+1)$ and
 $S(0,2k;1,2;0;k+1)$, since both $(k,2k;1,2;0;k+1)$ and $(k-1,2k;1,2;0;k+1)$ are subabundant.
 Analogously $S(k-1,2k;1,2;0;k+1)$ can be reduced to $S(k-2,2k;1,2;0;k+1)$ and
 $S(0,2k;1,2;0;k+1)$. Repeating the same process $k-2$ times, we can reduce
 $S(k,2k;1,2;0;k+1)$ to $(k+1)*S(0,2k;1,2;0;k+1)$.
 Recall that $(0,2k;1,2;1;k)$ and $(0,2k;1,2;0;k+1)$ are subabundant.
 Thus $S(0,2k;1,2;1;k)$ and $S(0,2k;1,2;0;k+1)$ are true, because
 $T(0,2k;1,2;1)$ and $T(0,2k;1,2;0)$ are true and by Remark \ref{easyremark} (i).
 This implies that $T(2k+1,2k;1,2;k+1)$ is true.

 In the same way, we can also prove that $T(n+1,n;1,2;s)$ is true when $n$ is odd.
 Indeed, $T(2k+2,2k+1;1,2;k+2)$ can be reduced to $(k+2)*S(0,2k+1;1,2;1;k+1)$ and
 $(k+1)*S(0,2k+1;1,2;0;k+2)$.  Since $S(0,2k+1;1,2;1;k+1)$ and $S(0,2k+1;1,2;0;k+2)$
 are true, so is $T(2k+2,2k+1;1,2;k+2)$.
 \label{th:example1}
\end{example}
As immediate consequences of Theorem~\ref{th:splitting}, we can prove the following two
propositions:
\begin{proposition}
$T(m,n;1,2;s)$ is true if $s \leq m+1$ and $m\leq {n+1 \choose 2}$ or
if $s \geq (m+1)(n+1)$.
\label{th:mn12s}
\end{proposition}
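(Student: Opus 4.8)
The plan is to derive both assertions from the Splitting Theorem (Theorem~\ref{th:splitting}) by an iteration that peels off a factor $\P^0$ at a time, much as in Examples~\ref{th:(m,1;1,2;3)} and~\ref{th:example1}. In both cases, monotonicity of $T(m,n;1,2;\cdot)$ along a subabundant (resp. superabundant) range lets us reduce to a single boundary value of $s$. For the first assertion one checks that $(m,n;1,2;m+1)$ is subabundant exactly when $m+n+1\le{n+2\choose 2}$, that is, when $m\le{n+1\choose 2}$; hence it suffices to prove $T(m,n;1,2;m+1)$ in that range. For the second assertion $(m,n;1,2;(m+1)(n+1))$ is superabundant for all $m,n$, so it suffices to prove $T(m,n;1,2;(m+1)(n+1))$.

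For the subabundant case I would apply Theorem~\ref{th:splitting} repeatedly, at each stage writing the first factor as $j=(j-1)+0+1$ and splitting the point count as $s'+s''$ with $s''=1$. A first application turns $T(m,n;1,2;m+1)$ into the pair $S(m-1,n;1,2;m;1)$ and $S(0,n;1,2;1;m)$; applying the same reduction to the surviving $(m-1)$-factor term, and iterating, one peels off one copy of $S(0,n;1,2;1;m)$ at each stage until $T(m,n;1,2;m+1)$ is expressed as $m+1$ copies of $S(0,n;1,2;1;m)$. The bookkeeping step is to verify that every six-tuple appearing in the process---namely $(j-1,n;1,2;j;m-j+1)$ for $1\le j\le m$, and $(0,n;1,2;1;m)$---is subabundant, and a direct computation shows that each of these inequalities is again equivalent to $m\le{n+1\choose 2}$, so the hypothesis is exactly what is needed. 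Finally $S(0,n;1,2;1;m)$ is true: it is subabundant and $T(0,n;1,2;1)$ holds trivially, so Remark~\ref{easyremark}(i) applies. Theorem~\ref{th:splitting} then shows $T(m,n;1,2;m+1)$ is true (and subabundant), and monotonicity gives $T(m,n;1,2;s)$ for every $s\le m+1$.

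For the superabundant case the scheme is identical, but now I split the point count with $s''=n+1$ at the stage with first factor $j$ (so $s'=j(n+1)$). Starting from $T(m,n;1,2;(m+1)(n+1))$, repeated use of the superabundant version of Theorem~\ref{th:splitting} reduces it to $m+1$ copies of $S(0,n;1,2;n+1;m(n+1))$, every intermediate six-tuple $(j-1,n;1,2;j(n+1);(m-j+1)(n+1))$ being superabundant because $(n+1)(n+m+1)\ge{n+2\choose 2}$ holds unconditionally. Each $S(0,n;1,2;n+1;m(n+1))$ is true: $(0,n;1,2;n+1)$ is superabundant, and $T(0,n;1,2;n+1)$ holds by the Alexander-Hirschowitz theorem (Remark~\ref{easyremark}(ii)), so $U_{0,n}(n+1,0)$ already equals the whole space $S_2(W)$ and hence so does $U_{0,n}(n+1,m(n+1))$. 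Applying Theorem~\ref{th:splitting} gives $T(m,n;1,2;(m+1)(n+1))$, and monotonicity settles all $s\ge(m+1)(n+1)$.

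The only real obstacle I anticipate is the abundance bookkeeping: one must make sure that at every stage the two six-tuples handed to the Splitting Theorem carry the same abundance type, and in the subabundant case this is precisely where the hypothesis $m\le{n+1\choose 2}$ is consumed. The potential edge cases are harmless: $m=0$ in either assertion is immediate ($T(0,n;1,2;1)$ trivially, $T(0,n;1,2;n+1)$ by Alexander-Hirschowitz), and the iteration terminates cleanly because its base terms are exactly these two.
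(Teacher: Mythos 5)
Your proposal is correct and follows essentially the same route as the paper: peel off $\P^0$ factors via Theorem~\ref{th:splitting}, reduce $T(m,n;1,2;m+1)$ to $(m+1)$ copies of $S(0,n;1,2;1;m)$ (with the hypothesis $m\le\binom{n+1}{2}$ consumed exactly in the subabundance checks) and $T(m,n;1,2;(m+1)(n+1))$ to $(m+1)$ copies of a full statement over $\P^0\times\P^n$ settled by Alexander--Hirschowitz. Your bookkeeping in the superabundant case, $S(0,n;1,2;n+1;m(n+1))$, is in fact the arithmetically consistent version of the paper's $S(0,n;1,2;m+1;(m+1)n)$, so no changes are needed.
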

\begin{proof}
We first prove that if $m\leq {n+1 \choose 2}$, then
$T(m,n;1,2;s)$ is true for any $s \leq m+1$. Since $(m,n;1,2;s)$
is subabundant for any $s \leq m+1$, it is enough to prove that
$T(m,n;1,2;m+1)$ is true.  Applying Theorem~\ref{th:splitting}
$(m+1)$ times, we can reduce to $(m+1)*S(0,n;1,2;1;m)$. Indeed
$(0,n;1,2;1;m)$ is subabundant, since from the assumption $m\leq
{n+1 \choose 2}$ it follows
\[
 (n+1)+m \leq {n+2 \choose 2}.
\]
It also follows that $(m-h,n;1,2;m+1-h;h)$ is subabundant for
any $1\leq h\leq m-1$. Since $S(0,n;1,2;1;0)$ is true, so is
$S(0,n;1,2;1;m)$. This implies that $T(m,n;1,2;m+1)$ is true.

To show that $T(m,n;1,2;s)$ is true for any $s \geq (m+1)(n+1)$,
it is enough to prove that $T(m,n;1,2;(m+1)(n+1))$ is true, since
$(m,n;1,2;(m+1)(n+1))$ is superabundant. In the same way as in the
previous case the statement can be reduced to
$(m+1)*S(0,n;1,2;m+1;(m+1)n)$. Since $(0,n;1,2;m+1)$ is
superabundant and $T(0,n;1,2;m+1)$ is true, it follows that
$(0,n;1,2;m+1;(m+1)n)$ is superabundant and
$S(0,n;1,2;m+1;(m+1)n)$ is true. Thus $T(m,n;1,2;(m+1)(n+1))$ is
true.
\end{proof}
\begin{remark}
In Sections~\ref{sec:subabundant} and \ref{sec:superabundant},
we will use different techniques to improve the bounds  given in Proposition~\ref{th:mn12s}.
\end{remark}
\begin{proposition}
Suppose that $m \geq 1$ and $d \geq 3$. Let $\displaystyle \ell =
\left\lfloor \frac{{n+d \choose d}}{m+n+1} \right\rfloor$ and let
$\displaystyle h = \left\lceil \frac{{n+d \choose d}}{n+1}
\right\rceil$. Then
\begin{itemize}
\item[(i)]  $T(m,n;1,d;s)$ is true for any $s \leq \ell(m+1)$.
\item[(ii)] If $(n,d)\not=(2,4)$, $(3,4)$, $(4,3)$, $(4,4)$ and if $s \geq h(m+1)$, then $T(m,n;1,d;s)$ is true.
\item[(iii)] If $(n,d)=(2,4)$, $(3,4)$, $(4,3)$ or $(4,4)$, then
$T(m,n;1,d;s)$ is true for any  $s \geq (h+1)(m+1)$.
\end{itemize}
\end{proposition}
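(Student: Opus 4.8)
The plan is to follow the pattern of the proof of Proposition~\ref{th:mn12s}, using the Splitting Theorem~\ref{th:splitting} to reduce each statement to a base case on $\P^0\times\P^n$ and then invoking the Alexander--Hirschowitz theorem~\cite{AH}. In all three parts one first reduces to a single value of $s$: since $\ell(m+n+1)\le {n+d\choose d}$ the tuple $(m,n;1,d;s)$ is subabundant whenever $s\le \ell(m+1)$, and since $h(n+1)\ge {n+d\choose d}$ it is superabundant whenever $s\ge h(m+1)$; it therefore suffices to prove $T(m,n;1,d;\ell(m+1))$ in case (i), $T(m,n;1,d;h(m+1))$ in case (ii), and $T(m,n;1,d;(h+1)(m+1))$ in case (iii). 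In each case one then applies Theorem~\ref{th:splitting} repeatedly with $m'=m-1$, $m''=0$, peeling off one copy of $\P^0$ at a time and splitting the double points as evenly as possible, so that after $m+1$ steps the statement is reduced to $m+1$ copies of $S(0,n;1,d;\ell;\ell m)$ (resp.\ $S(0,n;1,d;h;hm)$, resp.\ $S(0,n;1,d;h+1;(h+1)m)$).

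For part (i): after $j$ splittings one is left with the tuple $S(m-j,n;1,d;\ell(m+1-j);j\ell)$ together with $j$ already produced copies of $S(0,n;1,d;\ell;\ell m)$, and a short computation shows that at every stage the relevant abundance inequality collapses to $\ell(m+n+1)\le {n+d\choose d}$, so all of these tuples are subabundant and Theorem~\ref{th:splitting} applies. By Remark~\ref{easyremark}(i) each base case $S(0,n;1,d;\ell;\ell m)$ holds as soon as $T(0,n;1,d;\ell)$ does. Finally, since $m\ge 1$ one has $\ell\le \lfloor {n+d\choose d}/(n+2)\rfloor< {n+d\choose d}/(n+1)\le h$, so $\ell$ is strictly below the multiplicity $h$ at which an Alexander--Hirschowitz exception could occur; as $(0,n;1,d;\ell)$ is subabundant and $d\ge 3$, the theorem of~\cite{AH} gives that $\ell$ general double points of $\P^n$ impose independent conditions on $\cO(d)$, that is, $T(0,n;1,d;\ell)$ holds. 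This completes (i).

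Parts (ii) and (iii) run along the same lines, all intermediate tuples now being superabundant, and reduce respectively to the base cases $S(0,n;1,d;h;hm)$ and $S(0,n;1,d;h+1;(h+1)m)$. Since $h(n+1)\ge {n+d\choose d}$ (resp.\ $(h+1)(n+1)> {n+d\choose d}$), each such base case is true provided the span of the tangent spaces to $X_{0,n}$ at $h$ (resp.\ $h+1$) general points already fills $H^0(\P^n,\cO(d))$ --- once this span is the whole space, adjoining the one-dimensional subspaces $Y_{q_i}$ keeps it so --- i.e.\ provided $T(0,n;1,d;h)$ (resp.\ $T(0,n;1,d;h+1)$) holds. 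If $(n,d)\notin\{(2,4),(3,4),(4,3),(4,4)\}$ then $T(0,n;1,d;h)$ follows directly from~\cite{AH}, giving (ii). If $(n,d)$ is one of those four pairs, one checks that $h$ is precisely the corresponding Alexander--Hirschowitz exceptional multiplicity ($5,9,7,14$ respectively), so $T(0,n;1,d;h)$ fails; but the exception occurs at that single value of $s$ only, and $(h+1)(n+1)>{n+d\choose d}$, so the theorem of~\cite{AH} gives that the linear system of degree-$d$ forms singular at $h+1$ general points of $\P^n$ is empty, i.e.\ $T(0,n;1,d;h+1)$ holds. This gives (iii).

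The main obstacle is the bookkeeping around the Alexander--Hirschowitz exceptions: one has to ensure that the evenly distributed multiplicity reached in the base cases is never an exceptional one, which is exactly why the hypothesis $m\ge 1$ is needed in (i) (it forces $\ell<h$) and why $h$ must be replaced by $h+1$ in the four special cases of (iii). The remaining ingredient, verifying that every tuple produced during the iterated splitting is subabundant (resp.\ superabundant), is routine, since at each stage the inequality reduces to the single condition $\ell(m+n+1)\le {n+d\choose d}$ (resp.\ $h(m+n+1)\ge {n+d\choose d}$).
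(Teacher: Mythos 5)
Your proposal is correct and follows essentially the same route as the paper: reduce to the boundary value of $s$ by sub/superabundance, apply Theorem~\ref{th:splitting} with $m''=0$ repeatedly to reach $(m+1)$ copies of $S(0,n;1,d;\ell;\ell m)$ (resp.\ $h$, $h+1$), and settle the base cases via Remark~\ref{easyremark} and the Alexander--Hirschowitz theorem. Your explicit check that $\ell<h$ keeps part (i) clear of the four exceptional multiplicities is exactly the point the paper makes via the inequality $\ell<\bigl\lfloor\binom{n+d}{d}/(n+1)\bigr\rfloor$, just spelled out a bit more carefully.
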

\begin{proof}
(i) Suppose that $s = \ell(m+1)$. Since
\[ \ell(n+1)+\ell m = \ell (m+n+1) \leq \frac{{n+d \choose d}}{m+n+1}
(m+n+1)= {n+d \choose d},
\]
then $(0,n;1,d;\ell;\ell m)$ is subabundant (this implies that
$(h,n;1,d;\ell+ h\ell;\ell( m - h))$ is also subabundant for all $1\leq
h \leq m$). Then $T(m,n;1,d; \ell(m+1))$ can be reduced to
$(m+1)*S(0,n;1,d;\ell;\ell m)$. Furthermore,  since $\displaystyle
\ell < \left\lfloor\frac{{n+d \choose d}}{n+1} \right\rfloor$,
then $S(0,n;1,d;\ell;0)$ is true
by the Alexander-Hirschowitz theorem.
Thus $S(0,n;1,d;\ell; \ell m)$ is true by Remark \ref{easyremark} (i).
This implies, by Theorem~\ref{th:splitting}, that
$T(m,n;1,d; \ell(m+1))$  is true.

\vspace{2mm}
\noindent
(ii) Let $s=h(m+1)$. Then $(m,n;1,d;s)$ is clearly superabundant.
The statement $T(m,n;1,d;s)$ can be reduced to $(m+1)*S(0,n;1,d;h;hm)$.
Suppose that $n \not=3, 4$. Then the Alexander-Hirschowitz theorem says that
$T(0,n;1,d;h)$ is true, and so is  $S(0,n;1,d;h;hm)$.
Hence by Theorem~\ref{th:splitting} it follows that $T(m,n;1,d;h(m+1))$ is true.

\vspace{2mm}
\noindent
(iii)
Suppose that  $(n,d)=(2,4)$, $(3,4)$, $(4,3)$ or $(4,4)$.
Then $T(0,n;1,d;h+1)$ is true by the Alexander-Hirschowitz theorem,
and thus $S(0,n;1,d;h;(h+1)m)$ is also true.
Therefore the same argument as in (ii) proves that $T(m,n;1,d;(h+1)(m+1))$ is true.
\end{proof}
\section{Segre-Veronese varieties $\P^m \times \P^n$ embedded by $\cO(1,2)$: Subabundant Case}
\label{sec:subabundant}
Let $V$ be an $(m+1)$-dimensional vector space over $\mathbb{C}$ with basis
$\{e_0, \dots, e_m\}$ and let $W$ be an $(n+1)$-dimensional vector space over $\mathbb{C}$
with basis $\{f_0, \dots, f_n\}$. As in the previous section, $X_{m,n}$ denotes $X_{m,n}^{1,2}$.
Let $U_L$ be a two-codimensional subspace of $W$ and let
$L= \P(V)\times \P(U_L)$.
Note that if $p$ is a point of $\nu_{1,2}(L)$, then
the affine cone $T_p(X_{m,n})$ over the tangent space to $X_{m,n}$ at $p$
modulo $V\otimes S_2(U_L)$
has dimension $(m+n+1)-(m+n-2+1)=2$.
\begin{definition}
Let $k = \left\lfloor n/2 \right\rfloor$ and let
\[
 \underline{s}(m,n)=
 \left\{
 \begin{array}{ll}
 (m+1)k-\frac{(m-2)(m+1)}{2} & \mbox{if $n$ is even;}  \\
 (m+1)k-\frac{(m-3)(m+1)}{2} & \mbox{if $m$ and $n$ are  odd;} \\
  (m+1)k-\frac{(m-3)(m+1)+1}{2} & \mbox{if $m$ is even and if $n$ is odd.} \
 \end{array}
 \right.
\]
Note that $\underline{s}(m,m-2)=0$.
We will sometimes drop the parameters $m,n$ when they are clear from the context.
\label{th:defn0}
\end{definition}
The goal of this section is to prove that if $m \leq n+2$, then
$T(m,n;1,2;s)$ is true for any $s \leq \underline{s}(m,n)$.
Since $(m,n;1,2;s)$ is subabundant,
it is sufficient to prove that  $T(m,n;1,2;\underline{s}(m,n))$ is true.
The key point is to restrict to subspaces of codimension $2$ and to use two-step induction
on $n$. 
It is obvious that $T(m,m-2;1,2;0)$ is true.
It also follows from  Example~\ref{th:example1} that
$T(m,m-1;1,2;\underline{s}(m,m-1))$ is true.
Thus it remains only to show that if $T(m,n-2;1,2;\underline{s}(m,n-2))$ is true,
then so is $T(m,n;1,2;\underline{s}(m,n))$. To do this, we need to
introduce the auxiliary statements $\underline{R}(m,n)$ and $Q(m,n)$
(see Definitions~\ref{th:defn} and \ref{th:defn'})
and use double induction on $m$ and $n$ to prove such auxiliary statements.
\begin{definition}
Let $k$ and $\underline{s}=\underline{s}(m,n)$ be as given in Definition~\ref{th:defn0}.
Note that
$\underline{s}(m,n-2)=\underline{s}-(m+1)$.
Let $p_1, \dots, p_{\underline{s}-(m+1)}$ be general points of  $L$, let $q_1, \dots, q_{m+1}$ be general points
of $\P^{m,n} \setminus L$
and let $\underline{V}_{m,n}$ be the vector space
$\langle V \otimes S_2(U_L), \sum_{i=1}^{\underline{s}-(m+1)} T_{p_i}(X_{m,n}), \sum_{i=1}^{m+1} T_{q_i}(X_{m,n})
\rangle$.
Note that the following inequality holds:
\begin{eqnarray*}
\dim \underline{V}_{m,n} & \leq &
(m+1){n \choose 2}+2[\underline{s}-(m+1)]+(m+1)(m+n+1) \\
& = &
\left\{
\begin{array}{ll}
(m+1){n+2 \choose 2} & \mbox{if $n$ is even, or if $m$ and $n$ are odd;} \\
(m+1){n+2 \choose 2}-1  & \mbox{if $m$ is even  and if $n$ is odd.}
\end{array}
\right.
\end{eqnarray*}
We say that {\it $\underline{R}(m,n)$ is true} if equality holds.
\label{th:defn}
\end{definition}
Remark~\ref{th:fatPoints} implies that  $\underline{R}(m,n)$ is true if and only if
\[
\dim H^0 (\P^{m,n},\sI_{Z \cup L}(1,2)) =
\left\{
\begin{array}{ll}
0 & \mbox{if $n$ is even or if $m$ and $n$ are odd;} \\
1  & \mbox{if $m$ is even  and if $n$ is odd,}
\end{array}
\right.
\]
where  $Z=\{p_1^2, \dots, p_{\underline{s}-(m+1)}^2, q_1^2, \dots, q_{m+1}^2\}$.
\begin{proposition}
Let $k$ and $\underline{s}=\underline{s}(m,n)$ be as given in Definition~\ref{th:defn0}.
If $\underline{R}(m,n)$ is true and if $T(m,n-2;1,2;\underline{s}-(m+1))$ is true, then
$T(m,n;1,2;\underline{s})$ is true.
\label{th:splitting1}
\end{proposition}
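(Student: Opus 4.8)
The plan is to convert $T(m,n;1,2;\underline{s})$ into a statement about a scheme of double points, specialize that scheme onto the codimension-two subvariety $L=\P(V)\times\P(U_L)$, and then read off the two resulting contributions from the hypotheses $\underline{R}(m,n)$ and $T(m,n-2;1,2;\underline{s}-(m+1))$. First, by Remark~\ref{th:fatPoints} and the fact that $(m,n;1,2;\underline{s})$ is subabundant, $T(m,n;1,2;\underline{s})$ is equivalent to
\[
\dim H^0(\P^{m,n},\sI_Z(1,2)) = N - \underline{s}(m+n+1), \qquad N:=(m+1){n+2 \choose 2},
\]
for $Z$ a general union of $\underline{s}$ double points of $\P^{m,n}$. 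Since a length $m+n+1$ subscheme imposes at most $m+n+1$ conditions on $|\cO(1,2)|$, the inequality "$\ge$" holds for every such $Z$; and $\dim H^0(\sI_{(-)}(1,2))$ is upper semicontinuous in flat families. Hence it suffices to exhibit one special configuration $Z'$ with $\dim H^0(\P^{m,n},\sI_{Z'}(1,2)) \le N - \underline{s}(m+n+1)$.

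For $Z'$ I would take exactly the configuration underlying Definition~\ref{th:defn}: $Z'=\{p_1^2,\dots,p_{\underline{s}-(m+1)}^2\}\cup\{q_1^2,\dots,q_{m+1}^2\}$, with $p_1,\dots,p_{\underline{s}-(m+1)}$ general on $L$ and $q_1,\dots,q_{m+1}$ general on $\P^{m,n}\setminus L$. Restriction of forms to $L$ gives the left exact sequence
\[
0\to H^0(\P^{m,n},\sI_{L\cup Z'}(1,2))\to H^0(\P^{m,n},\sI_{Z'}(1,2))\to H^0(L,\sI_{Z'\cap L,\,L}(1,2)),
\]
since a form vanishing on $Z'$ restricts on $L$ to a form vanishing on the scheme-theoretic trace $Z'\cap L$, and its restriction to $L$ is zero precisely when it also vanishes on $L$. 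As $L$ is smooth of codimension two, contains every $p_i$, and meets no $q_j$, the trace $Z'\cap L$ is a general union of $\underline{s}-(m+1)$ double points of $L$; and $L$, embedded by $\cO_L(1,2)$, is the Segre-Veronese variety $X_{m,n-2}$. Consequently
\[
\dim H^0(\P^{m,n},\sI_{Z'}(1,2))\le \dim H^0(\P^{m,n},\sI_{L\cup Z'}(1,2))+\dim H^0(L,\sI_{Z'\cap L,\,L}(1,2)).
\]

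It then remains to bound the two terms and add them. The first is exactly what $\underline{R}(m,n)$ governs: by the reformulation recorded just after Definition~\ref{th:defn}, it equals $\epsilon$, where $\epsilon=0$ if $n$ is even or if $m$ and $n$ are both odd, and $\epsilon=1$ if $m$ is even and $n$ is odd. For the second, since $(m,n-2;1,2;\underline{s}-(m+1))$ is subabundant and $T(m,n-2;1,2;\underline{s}-(m+1))$ holds, Remark~\ref{th:fatPoints} applied to $X_{m,n-2}$ gives $\dim H^0(L,\sI_{Z'\cap L,\,L}(1,2))=(m+1){n \choose 2}-(\underline{s}-(m+1))(m+n-1)$. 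Finally, the arithmetic identity built into Definition~\ref{th:defn} — where the precise shape of $\underline{s}(m,n)$ is used —
\[
(m+1){n \choose 2}+2(\underline{s}-(m+1))+(m+1)(m+n+1)=(m+1){n+2 \choose 2}-\epsilon
\]
rearranges to $\epsilon+(m+1){n \choose 2}-(\underline{s}-(m+1))(m+n-1)=N-\underline{s}(m+n+1)$. Combining this with the two bounds just obtained yields $\dim H^0(\P^{m,n},\sI_{Z'}(1,2))\le N-\underline{s}(m+n+1)$, which by the first paragraph finishes the proof.

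The only place demanding care is the restriction step: one must confirm that the trace of $Z'$ on $L$ is exactly $\underline{s}-(m+1)$ genuine double points of $L$ — each $p_i^2$ cuts $L$ in the double point of $L$ at $p_i$ because $L$ is smooth there, while the $q_j$, lying off $L$, contribute nothing — and that the kernel of the restriction map is precisely $H^0(\P^{m,n},\sI_{L\cup Z'}(1,2))$, so that $\underline{R}(m,n)$ applies verbatim. I do not expect a genuine obstacle here: the substantive input has been deliberately packaged into the two hypotheses, and the formula defining $\underline{s}(m,n)$ is rigged precisely so that the kernel and image dimensions add up to the required codimension.
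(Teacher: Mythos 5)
Your proposal is correct and follows essentially the same route as the paper: specialize $\underline{s}-(m+1)$ of the double points onto $L$, use the residual exact sequence $0\to \sI_{Z\cup L}(1,2)\to \sI_Z(1,2)\to \sI_{Z\cap L,L}(1,2)$, bound the kernel by $\underline{R}(m,n)$ and the trace by $T(m,n-2;1,2;\underline{s}-(m+1))$, and conclude by semicontinuity. The only difference is that you carry out the final dimension count explicitly (and verify it correctly), whereas the paper leaves that arithmetic implicit.
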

\begin{proof}
Let $p_1, \dots, p_{\underline{s}} \in \P^{m,n}$ and let  $Z = \{p_1^2, \dots, p_{\underline{s}}^2\}$. Then
it is easy to check that
\[
 \dim H^0 (\P^{m,n}, \sI_Z(1,2)) \geq
 \left\{
 \begin{array}{ll}
\displaystyle \frac{m^3-m}{2} & \mbox{if $n$ is even or if $m$ and $n$ are odd;} \\
\displaystyle k + 1 +\frac{m^3}{2} & \mbox{if $m$ is even and $n$ is odd.}
 \end{array}
 \right.
\]
Suppose that $p_1, \dots, p_{\underline{s}-(m+1)} \in L$
and that $p_{\underline{s}-m}, \dots, p_{\underline{s}} \in \P^{m,n} \setminus L$.
Let $Z = \{p_1^2, \dots, p_{\underline{s}}^2\}$.
Let $Z' = Z \cap L = \{p_1^2, \dots, p_{\underline{s}-(m+1)}^2\}$.
Then we have the following short exact sequence:
\[
 0 \rightarrow \sI_{Z \cup L} (1,2) \rightarrow
 \sI_Z (1,2) \rightarrow \sI_{Z', L}(1,2) \rightarrow 0.
\]
Taking cohomology, we have
\[
 0 \rightarrow H^0 (\P^{m,n}, \sI_{Z \cup L}(1,2)) \rightarrow
  H^0 (\P^{m,n}, \sI_{Z}(1,2)) \rightarrow H^0 (L, \sI_{Z'}(1,2)).
\]
Thus we must have
\[
 \dim H^0 (\P^{m,n}, \sI_{Z}(1,2)) \leq \dim H^0 (\P^{m,n}, \sI_{Z \cup L}(1,2))+\dim H^0 (L, \sI_{Z'}(1,2)).
\]
Since  $\underline{R}(m,n)$ and $T(m,n-2;1,2;\underline{s}-(m+1))$ are  true, we
have
\[
 \dim H^0 (\P^{m,n}, \sI_Z(1,2)) \leq
 \left\{
 \begin{array}{ll}
 \dim H^0 (L, \sI_{Z'}(1,2))+ 1  & \mbox{if $m$ is even and if $n$ is odd; }\\
 \dim H^0 (L, \sI_{Z'}(1,2)) & \mbox{otherwise, }
 \end{array}
 \right.
\]
from which the proposition follows.
\end{proof}
To prove that $T(m,n;1,2;\underline{s}(m,n))$ is true, it  is therefore enough to
show that $\underline{R}(m,n)$ is true if $m \leq n$.
The proof is again by two-step induction on $n$.
To be more precise,
we first prove $\underline{R}(m,m)$ and $\underline{R}(m,m+1)$ are true.
Then we show that if $\underline{R}(m,n-2)$ is true, then $\underline{R}(m,n)$ is also true.
\begin{proposition}\label{Rmm}
$\underline{R}(m,m)$ is true for any $m \geq 1$.
\end{proposition}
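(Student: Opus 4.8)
The plan is to prove $\underline{R}(m,m)$ by induction on $m$, using the splitting philosophy of Theorem~\ref{th:splitting} together with a careful count of tangent spaces at points placed on the codimension-two subvariety $L = \P(V) \times \P(U_L)$. Recall that $\underline{R}(m,m)$ asserts that the vector space $\underline{V}_{m,m} = \langle V \otimes S_2(U_L),\ \sum_{i=1}^{\underline{s}-(m+1)} T_{p_i}(X_{m,m}),\ \sum_{i=1}^{m+1} T_{q_i}(X_{m,m})\rangle$ attains its expected dimension, where the $p_i$ lie on $L$ and the $q_i$ lie off $L$. Since $n=m$ is even when $m$ is even and odd when $m$ is odd, we must track the two parity cases in parallel; in particular the target dimension is $(m+1)\binom{m+2}{2}$ when $m$ is odd and $(m+1)\binom{m+2}{2}-1$ when $m$ is even, and one checks directly from Definition~\ref{th:defn0} that $\underline{s}(m,m) - (m+1) = \underline{s}(m,m-2)$, so the bookkeeping is consistent.

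First I would set up the base cases $m=1$ and $m=2$ by hand (or cite the $m=1,2$ computations whose scripts are promised in Section~\ref{sec:conjecture}, i.e.\ Lemma~\ref{th:m1and2}); for $m=1$, $\underline{s}(1,1)=1$ and the statement reduces to a short explicit linear-algebra check in $V \otimes S_2(W)$ with $\dim = 2\cdot 3 = 6$. For the inductive step, assume $\underline{R}(m-1,m-1)$ holds. The idea is to split $V = V' \oplus V''$ with $\dim V' = m$ and $\dim V'' = 1$ (so $m' = m-1$, $m'' = 0$), and to distribute the $\underline{s}(m,m)$ double points $p_i$ and the $m+1$ tangent-only points $q_i$ between the two factors so that, after restricting to $V' \otimes S_2(W)$ and to $V'' \otimes S_2(W)$, one lands on an instance of $\underline{R}(m-1,m-1)$ on the first factor and on an Alexander--Hirschowitz-type statement $T(0,m;1,2;\cdot)$ on the second. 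Concretely, a point $p=[u\otimes v^2]$ with $u\in V'$ contributes $T_p(X_{m-1,m}) \oplus Y_{p''}(X_{0,m})$ and one with $u\in V''$ contributes $Y_{p'}(X_{m-1,m}) \oplus T_p(X_{0,m})$, exactly as in the proof of Theorem~\ref{th:splitting}; the subtlety here is that some of the points are constrained to lie on $L$, so the splitting must be arranged to respect $L = \P(V)\times\P(U_L)$, which it does since $L$ splits compatibly as $\P(V')\times\P(U_L)$ and $\P(V'')\times\P(U_L)$.

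The main obstacle I anticipate is the arithmetic of how many points go to each side: one must verify that the prescribed split of $\underline{s}(m,m)$ and of $m+1$ simultaneously yields (a) the correct subabundant count on the $V'$-side so that it is literally an instance of $\underline{R}(m-1,m-1)$ (using $\underline{s}(m,m) = \underline{s}(m-1,m) + \text{correction}$-type identities coming from the quadratic term $\tfrac{(m-2)(m+1)}{2}$ in Definition~\ref{th:defn0}), and (b) a count on the $V''$-side that matches a true statement $T(0,m;1,2;s)$ or $S(0,m;1,2;s;t)$ — which by Remark~\ref{easyremark}(i)--(ii) and the Alexander--Hirschowitz theorem holds as soon as the relevant tuple is subabundant with $s \le m+1$, or superabundant with $s \ge m+1$. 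The parity shift between $m$ and $m-1$ (even $\leftrightarrow$ odd) means the "$-1$" in the target dimension migrates between the two cases, and getting the off-by-one to land on the $0$-dimensional factor (where $\dim H^0$ of the relevant ideal sheaf is controlled exactly by Alexander--Hirschowitz) rather than being lost is the delicate point. Once the split is pinned down, the dimension identity $\dim \underline{V}_{m,m} = \dim \underline{V}_{m-1,m}^{(V')} + \dim \underline{V}^{(V'')} = (m+1)\binom{m+2}{2}$ (resp.\ $-1$) follows by the same direct-sum computation as in Theorem~\ref{th:splitting}, completing the induction.
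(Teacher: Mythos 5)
Your plan has a structural flaw that I do not see how to repair. The splitting of Theorem~\ref{th:splitting} decomposes only the first factor $V=V'\oplus V''$; the second factor $W$, and hence the second index $n$, is untouched. So splitting off a line from $V$ in the statement $\underline{R}(m,m)$ produces auxiliary statements about $\P^{m-1}\times\P^{m}$ and $\P^{0}\times\P^{m}$, never about $\P^{m-1}\times\P^{m-1}$; your inductive hypothesis $\underline{R}(m-1,m-1)$ is therefore unreachable by this route. Worse, the pieces you do land on are not instances of $T$, $S$, or $\underline{R}$ as defined in the paper: because $\underline{R}$ carries the extra summand $V\otimes S_2(U_L)$, the $\P^{0}\times\P^{m}$ piece becomes a statement about quadrics in $\P(W)$ spanning modulo $S_2(U_L)$ together with a mixture of tangent spaces and $Y$-spaces, which is not what Remark~\ref{easyremark} plus Alexander--Hirschowitz controls, and the $\P^{m-1}\times\P^{m}$ piece is an ``$\underline{R}$ with $Y$-spaces'' hybrid that the paper never defines. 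Closing the induction would require introducing and proving a whole new family of auxiliary statements, which your proposal does not supply.

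There are also two arithmetic slips that matter. First, for $n=m$ one computes $\underline{s}(m,m)=m+1$ in both parities, so $\underline{s}(m,m)-(m+1)=0$: there are \emph{no} double points on $L$ at all, and $\underline{R}(m,m)$ is exactly the equiabundant assertion that $V\otimes S_2(U_L)$ together with the $m+1$ tangent spaces at general points off $L$ spans all of $V\otimes S_2(W)$. Second, the ``$-1$'' in the expected dimension of $\underline{V}_{m,n}$ occurs only when $m$ is even \emph{and} $n$ is odd, which never happens for $n=m$; so the target is $(m+1)\binom{m+2}{2}$ in every case and there is no off-by-one to chase between parities. The paper's proof exploits precisely this clean formulation: it takes explicit points $[e_i\otimes v_i^2]$ with $v_0=f_0$, $v_1=f_1$ and $v_i=if_0+f_1+f_i$ for $i\geq 2$, and verifies by direct elimination modulo $V\otimes S_2(U_L)$ that every monomial $e_i\otimes f_jf_k$ with $j$ or $k$ in $\{0,1\}$ lies in the span. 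If you want to avoid that computation, you would need a reduction along the second factor (in the spirit of Proposition~\ref{th:splitting3}), not a splitting of $V$.
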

\begin{proof}
Without loss of generality, we may assume that $U_L = \langle f_2, \dots, f_{m+1} \rangle$.
Let $p_0 \dots, p_m \in \P^{m,m} \setminus L$.  For each $i \in \{0, \dots, m\}$,
we have $p_i=[u_i\otimes v_i^2]$  where $u_i\in V$ and $v_i \in W \setminus U_L$.
Recall that
\[
 T_{p_i} (X_{m,m}) = V \otimes v_i^2 + u_i \otimes v_i W.
\]
To prove the proposition, we will  find explicit vectors $u_i$'s and $v_i$'s such that
\[
V \otimes S_2(W) \equiv \sum_{i=0}^m T_{p_i}(X_{m,m}) \  \
\mbox{(mod $V \otimes S_2(U_L)$)}.
\]
Let $u_i=e_i$ for each $i \in \{0, \dots, m \}$ and let
\[
v_i = \left\{
\begin{array}{ll}
f_i  & \mbox{for $i=0, 1$,} \\
if_0+f_1+f_i & \mbox{for $2 \leq i \leq m$.}
\end{array}
\right.
\]
Then we have
\[
T_{p_i}(X_{m,m})=\left\{
\begin{array}{ll}
\left\langle e_0 \otimes f_0^2, \dots, e_m \otimes f_0^2, e_0 \otimes f_0f_1, \dots, e_0 \otimes f_0f_m  \right\rangle & \mbox{if $i=0$;} \\
\left\langle e_0\otimes f_1^2, \dots, e_m \otimes f_1^2, e_1\otimes f_0f_1, \dots, e_1 \otimes f_1f_m \right\rangle & \mbox{if $i=1$;}  \\
\left\langle e_0 \otimes (i f_0 + f_1 + f_i)^2, \dots, e_m \otimes (i f_0 + f_1 + f_i)^2,  \right. & \\
\left. e_i \otimes (if_0+f_1+f_i)f_0, \dots, e_i \otimes (if_0+f_1+f_i)f_m \right\rangle & \mbox{if $i \geq 2$.}
\end{array}
\right.
\]
Now we prove that every monomial in $\{ \ e_i \otimes f_j f_k \ | \ 0 \leq i, j \leq 1, j \leq k \leq m\ \}$
lies in $\langle V \otimes S_2(U_L), \ \sum_{i=0}^m T_{p_i}(X_{m,m}) \rangle$.

For each $i \in \{2, \dots, m\}$, we have
\begin{eqnarray*}
e_0 \otimes (if_0+f_1+f_i)^2
& \equiv &
e_0 \otimes ( i^2f_0^2 + f_1^2 +f_i^2 +2if_0f_1+2if_0f_i + 2f_1f_i) \\
& \equiv &
e_0 \otimes 2 f_1f_i
\quad (\mbox{mod $\langle V \otimes S_2(U_L), \ T_{p_1}(X_{m,m}), \ T_{p_2}(X_{m,m})\rangle$}).
\end{eqnarray*}
Indeed,
$e_0 \otimes f_0^2, e_0 \otimes f_0f_1$ and $e_0 \otimes f_0f_i$ are in $T_{p_1}(X_{m,m})$,
$e_0 \otimes f_1^2$ is in $T_{p_2}(X_{m,m})$,
$e_0 \otimes f_i^2$ is in $V \otimes S_2(U_L)$.
Similarly, one can prove that
\[
 e_1 \otimes (if_0+f_1+f_i)^2 \equiv
 e_1 \otimes 2if_0 f_i
 \quad (\mbox{mod $\langle V \otimes S_2(U_L), \ T_{p_1}(X_{m,m}), \ T_{p_2}(X_{m,m})
 \rangle$})
\]
for each $i \in \{2, \dots, m\}$.
So we have proved that $e_i \otimes f_jf_k \in \sum_{i=0}^m T_{p_i}(X_{m,m})$
if $i,j \in \{0, 1\}$ and $k \in \{0, \dots, m\}$.

Note that, for each $i \in \{2, \dots, m\}$,
\[
\begin{array}{lll}
e_i \otimes (if_0+f_1+f_i)f_0 &\equiv & i e_i \otimes f_0f_1 + e_i \otimes f_0 f_i; \\
e_i \otimes (if_0+f_1+f_i)^2 &\equiv&
2ie_i \otimes f_0f_1 + 2ie_i \otimes f_0f_i + 2e_i \otimes f_1f_i;  \\
e_i \otimes (if_0 + f_1 +f_i)f_1 & \equiv & i e_i \otimes f_0f_1 +  e_i \otimes f_1f_i
\end{array}
\]
modulo $\langle V \otimes S_2(U_L), \ \sum_{i=0}^m T_{p_i}(X_{m,m}) \rangle$.
Thus
\[
 e_i \otimes (if_0+f_1+f_i)^2 - 2e_i \otimes (if_0+f_1+f_i)f_0- ( 2 -2/i)  e_i \otimes (if_0+f_1+f_i)f_1
\]
is congruent to $(2/i) e_i \otimes f_1f_i$ modulo $\langle V \otimes S_2(U_L), \ \sum_{i=0}^m T_{p_i}(X_{m,m}) \rangle$.
Thus $e_i \otimes f_1f_i$, and hence $e_i \otimes f_0f_1$ and $e_i \otimes f_0f_i$,
is in $\langle V \otimes S_2(U_L), \ \sum_{i=0}^m T_{p_i}(X_{m,m}) \rangle$.

For every integer $j$ such that $i \not= j$ and $j \geq 2$,  we have
\[
\begin{array}{lll}
e_i \otimes (if_0+f_1+f_i)f_j & \equiv & i e_i \otimes f_0f_j + e_i \otimes f_1f_j; \\
e_i \otimes (j f_0 + f_1 +f_j)^2 & \equiv & 2j e_i \otimes f_0f_j +2e_i \otimes f_1f_j
\end{array}
\]
modulo $\langle V \otimes S_2(U_L), \ \sum_{i=0}^m T_{p_i}(X_{m,m}) \rangle$. Hence
\[
e_i \otimes (jf_0+f_1+f_j)^2 - (2j/i) e_i \otimes (if_0+f_1+f_i)f_j \equiv (2-2j/i)e_i \otimes f_1f_j.
\]
This implies that $e_i \otimes f_1f_j$, and hence $e_i \otimes f_0f_j$, is contained in
$\langle V \otimes S_2(U_L), \ \sum_{i=0}^m T_{p_i}(X_{m,m}) \rangle$, which completes the proof.
 \end{proof}
\begin{proposition}\label{Rmm+1}
$\underline{R}(m,m+1)$ is true for any $m \geq 1$.
\label{th:Rmm1}
\end{proposition}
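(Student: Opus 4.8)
The plan is to prove $\underline{R}(m,m+1)$ by an explicit construction, much as in the proof of Proposition~\ref{Rmm}: I choose specific points and specific representing vectors and compute the span of the associated tangent spaces modulo $V\otimes S_2(U_L)$. Take coordinates so that $W=\langle f_0,\dots,f_{m+1}\rangle$ and $U_L=\langle f_2,\dots,f_{m+1}\rangle$, and split $S_2(W)/S_2(U_L)$ into the complementary subspaces $B=\langle f_if_j:0\le i\le 1,\ i\le j\le m\rangle$ and $C=\langle f_0f_{m+1},\,f_1f_{m+1}\rangle$. Modulo $V\otimes S_2(U_L)$, a point $q=[u\otimes v^2]$ off $L$ contributes its full $(2m+2)$-dimensional tangent space, while a point $p=[a\otimes b^2]$ on $L$ (so $b\in U_L$) contributes only the $2$-plane $\langle a\otimes bf_0,\,a\otimes bf_1\rangle$. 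The bound in Definition~\ref{th:defn} is precisely the sum of $\dim(V\otimes S_2(U_L))$, of the $m+1$ contributions of the off-$L$ points, and of the $\underline{s}(m,m-1)=\lfloor(m+1)/2\rfloor$ contributions of the on-$L$ points; so $\underline{R}(m,m+1)$ says exactly that the span of all these subspaces attains that maximal dimension, i.e., that they are in direct position.

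For the off-$L$ points I take the configuration of Proposition~\ref{Rmm}: $q_i=[e_i\otimes v_i^2]$ for $0\le i\le m$, with $v_0=f_0$, $v_1=f_1$, and $v_i=if_0+f_1+f_i$ for $i\ge 2$. Since every $v_i$ lies in $\langle f_0,\dots,f_m\rangle$, the computation in the proof of Proposition~\ref{Rmm} applies word for word and shows that, modulo $V\otimes S_2(U_L)$, these $m+1$ tangent spaces span all of $V\otimes B$; in the present larger ambient space they additionally span the $(m+1)$-dimensional subspace $P\subseteq V\otimes C$ generated by $e_0\otimes f_0f_{m+1}$, $e_1\otimes f_1f_{m+1}$ and $e_i\otimes(if_0+f_1)f_{m+1}$ for $2\le i\le m$. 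For the points on $L$ I take $p_\ell=[a_\ell\otimes f_{m+1}^2]$ for $1\le\ell\le\lfloor(m+1)/2\rfloor$, with the $a_\ell\in V$ still to be chosen; since $f_{m+1}\in U_L$, the point $p_\ell$ contributes $\langle a_\ell\rangle\otimes C$, so jointly the on-$L$ points contribute $A\otimes C$, where $A=\langle a_1,\dots,a_{\lfloor(m+1)/2\rfloor}\rangle$. The total span modulo $V\otimes S_2(U_L)$ is therefore the direct sum $\bigl(V\otimes B\bigr)\oplus\bigl(P+A\otimes C\bigr)$, and everything comes down to computing $\dim\bigl(P\cap(A\otimes C)\bigr)$.

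Identifying $V\otimes C$ with $V\oplus V$ through the basis $\{f_0f_{m+1},f_1f_{m+1}\}$, one finds that $P=\{(x,y):x_{e_1}=0,\ y_{e_0}=0,\ iy_{e_i}=x_{e_i}\text{ for }2\le i\le m\}$, cut out by $m+1$ linearly independent conditions, while $A\otimes C$ corresponds to $\{(a,a'):a,a'\in A\}$. Hence for a general subspace $A$ of dimension $r:=\lfloor(m+1)/2\rfloor$ one gets $\dim\bigl(P\cap(A\otimes C)\bigr)=\max\{0,2r-(m+1)\}=0$, so $\dim\bigl(P+A\otimes C\bigr)=(m+1)+2r$. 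Feeding this into the count gives $\dim\underline{V}_{m,m+1}=\dim\bigl(V\otimes S_2(U_L)\bigr)+(m+1)(2m+1)+(m+1)+2r$, and a short arithmetic check identifies this with $(m+1)\binom{m+3}{2}$ when $m$ is odd and with $(m+1)\binom{m+3}{2}-1$ when $m$ is even --- precisely the equality asserted by $\underline{R}(m,m+1)$. The construction is uniform in $m$; the two parities surface only in this last count.

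The step I expect to be the real obstacle is the transversality statement $\dim\bigl(P\cap(A\otimes C)\bigr)=0$. This means committing to a concrete choice of the $a_\ell$ --- say, suitable combinations of $e_0$, $e_1$ and a few of the other $e_i$ --- and then using the explicit description of $P$ to check that no nonzero pair $(x,y)$ with $x,y\in A$ satisfies all $m+1$ defining conditions of $P$; by semicontinuity it suffices to verify this for one such choice, so the argument remains effective. (One could instead attempt an induction on $m$, restricting to a $\P^{m-1}\times\P^{m+1}$ via the splitting of Theorem~\ref{th:splitting}, but the direct construction looks shorter and more self-contained.)
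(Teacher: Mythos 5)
Your route is genuinely different from the paper's, and the reduction itself is sound: with the off-$L$ points taken to be exactly the configuration of Proposition~\ref{Rmm}, their tangent spaces modulo $V\otimes S_2(U_L)$ do span precisely $(V\otimes B)\oplus P$ (the computation in Proposition~\ref{Rmm} only ever discards monomials $e_a\otimes f_jf_k$ with $2\le j\le k\le m$, which still lie in $V\otimes S_2(U_L)$ after $f_{m+1}$ is adjoined, and the new vectors $e_i\otimes v_if_{m+1}$ reduce to $(e_0,0)$, $(0,e_1)$, $(ie_i,e_i)$ in $V\otimes C\cong V\oplus V$); the on-$L$ points contribute $A\otimes C$; and your final count matches the two target values of Definition~\ref{th:defn} in both parities. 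The one genuine gap is exactly where you flag it: the claim $\dim\bigl(P\cap(A\otimes C)\bigr)=\max\{0,2r-(m+1)\}$ ``for general $A$'' is not a consequence of generic transversality, because $A\otimes C$ is not a general $2r$-dimensional subspace of $V\otimes C$ --- it ranges over a proper subvariety of the Grassmannian --- so the expected-dimension count must be certified by exhibiting one $A$, and your write-up never does so. The choice $a_\ell=e_{2\ell-2}+e_{2\ell-1}$ for $1\le\ell\le r$ works: for $(x,y)\in P\cap(A\otimes C)$ the conditions $x_{e_1}=0$ and $y_{e_0}=0$ kill the $e_0+e_1$ coordinates of $x$ and $y$, and for each $j\ge1$ the two conditions $x_{e_{2j}}=2j\,y_{e_{2j}}$ and $x_{e_{2j+1}}=(2j+1)\,y_{e_{2j+1}}$ force the $(e_{2j}+e_{2j+1})$-coordinates $\beta_j$ of $x$ and $\delta_j$ of $y$ to satisfy $\beta_j=2j\delta_j=(2j+1)\delta_j$, hence $\delta_j=\beta_j=0$. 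Adding those three lines closes the argument.

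For comparison, the paper's proof takes a residuation route: it specializes the $m+1$ points $q_i$ onto a divisor $H=\P(V)\times\P(W')$ meeting $L$ in codimension two, applies $\underline{R}(m,m)$ on $H$ to kill the restricted system, and reduces the residual to an explicit rank computation for a scheme of simple and double points together with $L$ in the Segre embedding $\cO(1,1)$, i.e., inside $V\otimes W$. Your approach trades the exact-sequence step and the $(1,1)$ computation for a direct extension of the explicit span computation of Proposition~\ref{Rmm} to the larger ambient space. The two are of comparable length; yours has the mild advantages of treating both parities of $m$ uniformly (the paper writes out only $m$ even) and of isolating the whole difficulty in a transparent $2(m+1)$-dimensional linear-algebra problem, at the cost of having to justify carefully that the Proposition~\ref{Rmm} computation survives the enlargement of $W$ and $U_L$.
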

\begin{proof}
We only prove the statement for $m$ even,
since the other case can be proved in the same way.

If $m$  is even, then $\underline{s}(m,m+1)=3m/2+1$.
Let $p_1, \dots, p_{m/2} \in L$ and let $q_1, \dots, q_{m+1} \in \P^{m,m+1} \setminus L$.
Choose a subvariety $\P^{m,m} = \P(V) \times \P(W') \subset \P^{m,m+1}$ in such a way that the intersection of $\P^{m,m}$
with $L$ is $\P^{m,m-2}$. We denote it by $H$.  Specialize $q_1, \dots, q_{m+1}$ on $H \setminus L$.
Suppose that $p_1, \dots, p_{m/2} \not\in H$.
Let $Z=\{p_1^2, \dots, p_{m/2}^2, q_1^2, \dots, q_{m+1}^2 \}$. Then we have an exact sequence
\[
 0 \rightarrow \sI_{Z \cup L \cup H}(1,2) \rightarrow
 \sI_{Z \cup L}(1,2) \rightarrow \sI_{(Z \cup L) \cap H, H}(1,2) \rightarrow 0.
\]
By Proposition~\ref{Rmm},  $\underline{R}(m,m)$ is true. Thus  $\dim H^0(\sI_{(Z \cup L) \cap H, H}(1,2))=0$.
So we have
\[
\dim  H^0(\P^{m,m+1}, \sI_{Z \cup L \cup H}(1,2)) = \dim H^0  (\P^{m,m+1}, \sI_{Z \cup L}(1,2)).
\]
Thus we need to prove that  $\dim  H^0(\P^{m,m+1}, \sI_{Z \cup L \cup H}(1,2)) =1$.

Let $\widetilde{Z}$ be the residual  of $Z \cup L$ by $H$. Then
$H^0(\P^{m,m+1}, \sI_{Z \cup L \cup H}(1,2)) \simeq  H^0
(\P^{m,m+1}, \sI_{\widetilde{Z}}(1,1))$. Note that $\widetilde{Z}$
consists of $L$, $m+1$ simple points $q_1, \dots, q_{m+1}$ and  $m/2$ double points $p_1^2, \dots, p_{m/2}^2$.

We denote by $X'_{m,m+1}$ the Segre variety $X_{m,m+1}^{1,1}$ obtained
embedding $\P^{m,m+1}$ by the morphism given by $\cO(1,1)$.
The condition that $\dim H^0 (\P^{m,m+1},
\sI_{\widetilde{Z}}(1,1))=1$, i.e., $h_{\P^{m,m+1}}(\widetilde{Z},
(1,1))= (m+1)(m+2)-1$, is equivalent to the condition that the
following subspace of $V \otimes W$ has dimension $(m+1)(m+2)-1$:
\[
\left\langle V \otimes U_L,  \ \sum_{i=1}^{m/2} T_{p_i}(X'_{m,m+1}),
\ \sum_{i=1}^{m+1} \langle u_i' \otimes v_i'\ \rangle \right\rangle,
\]
where $q_i = [u_i' \otimes v_i']$.
Without loss of generality, we may assume that
$U_L = \langle f_0, \dots, f_{m-1} \rangle$ and that
$W' = \langle f_1, \dots, f_{m+1} \rangle$.
Since $p_i \in L$
for each $i \in \{1, \dots, m/2\}$,
there are $u_i \in V$ and $v_i \in U_L$ such that $p_i = [u_i \otimes
v_i]$. Recall that 
$T_{p_i}(X_{m,m+1}') = V \otimes v_i + u_i \otimes W$. So we have 
\[
T_{p_i}(X'_{m,m+1}) \equiv u_i \otimes \langle f_m, f_{m+1}  \rangle \ \mbox{(mod $V \otimes U_L$)},
\]
which implies that
\[
\langle V \otimes U_L, T_{p_i}(X'_{m,m+1}) \rangle = ( V \otimes f_0 ) \oplus
\left\langle V \otimes (U_L \cap W'), \  \sum_{i=1}^{m/2} u_i \otimes \langle f_m, f_{m+1} \rangle \right\rangle.
\]
Thus
\begin{eqnarray*}
&& \left\langle V \otimes U_L,  \ \sum_{i=1}^{m/2} T_{p_i}(X'_{m,m+1}),
\ \sum_{i=1}^{m+1} \langle u_i' \otimes v_i'\ \rangle \right\rangle \\
&&=  (V \otimes f_0 ) \oplus \left\langle
V \otimes (U_L \cap W'), \ \sum_{i=1}^{m/2} u_i \otimes \langle f_m, f_{m+1} \rangle, \
\sum_{i=1}^{m+1} \langle u_i' \otimes v_i' \ \rangle  \right\rangle.
\end{eqnarray*}
Note that  $T_1=\{ \  e_i \otimes  f_0 \ | \ 0 \leq i \leq m \ \}$ and
$T_2=\{\ e_i \otimes f_j\  | \ 0 \leq i \leq m, 1 \leq j \leq m-1\ \}$ are bases for $V \otimes f_0$ and $V\otimes (U_L \cap W')$ respectively.
Let $u_i=e_{i-1}$ for every $i \in \left\{1, \dots, m/2\right\}$.
Then $T_3=\left\{\left. \ e_i \otimes f_j \ \right| \ 0 \leq i \leq m/2-1, m \leq j \leq m+1 \right\}$.
Let $T_4$ be the set of vectors of the standard basis for $V \otimes W$
not included in the set $T_1 \cup T_2 \cup T_3$.
Then $T_4$ consists of $m+2$ distinct non-zero vectors. Choose $m+1$ distinct elements
of $T_4$ as $u_i' \otimes v_i'$'s. Then $\bigcup_{i=1}^4 T_i$ spans a
vector space of dimension $(m+1)(m+2)-1$. Thus we completed the proof.
\end{proof}
Let  $U_M$ be another two-codimensional subspaces of $W$ and let
$M$ be the subvariety of $\P^{m,n}$ of the forms $\P(V)\times \P(U_M)$.
If $L$ and $M$ are general,  then we have
\[
 \dim H^0(\P^{m,n},\sI_{L\cup M}(1,2))=(m+1)\left[{n+2 \choose 2} - 2{n \choose 2}+{n-2 \choose 2}\right] = 4(m+1).
\]
This is equivalent to the condition that
the subspace of $V \otimes W$
spanned by $V \otimes U_L$ and $V \otimes U_M$ has codimension $4(m+1)$.
\begin{definition}
Let $p_1, \dots, p_{m+1}$ be general points of $L$ and let $q_1, \dots, q_{m+1}$
be general points of $M$. We denote by $W_{m,n}$ the subspace of $V \otimes S_2(W)$
spanned by $V \otimes S_2(U_L)$, $V \otimes S_2(U_M)$, $\sum_{i=1}^{m+1} T_{p_i}(X_{m,n})$
and  $\sum_{i=1}^{m+1} T_{q_i}(X_{m,n})$.  Then
 $\dim W_{m,n}$ is expected to be $(m+1){n+2 \choose 2}$.
 We say that {\it $Q(m,n)$ is true} if $W_{m,n}$ has the expected dimension.
 \label{th:defn'}
\end{definition}
\begin{remark}
Keeping the same notation as in the previous definition,
we denote by $Z$ the zero-dimensional subscheme
$\{ p_1^2, \dots, p_{m+1}^2, q_1^2, \dots, q_{m+1}^2\}$.
Then $Q(m,n)$ is true if and only if $\dim H^0 (\P^{m,n}, \sI_{Z\cup L \cup M}(1,2))=0$.
\end{remark}
\begin{proposition}
If $Q(m,n)$ and $\underline{R}(m,n-2)$ are true, then $\underline{R}(m,n)$ is also true.
\label{th:splitting3}
\end{proposition}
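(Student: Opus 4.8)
The plan is to prove $\underline{R}(m,n)$ by specializing the $\underline{s}(m,n)$ double points onto and around a second two-codimensional subvariety $M$, and then restricting to $M$; by semicontinuity it suffices to exhibit \emph{one} such specialization for which $\dim H^0(\P^{m,n},\sI_{Z\cup L}(1,2))$ is at most the expected value ($0$, or $1$ when $m$ is even and $n$ is odd, as in Definition~\ref{th:defn}). So I would take $L=\P(V)\times\P(U_L)$ and $M=\P(V)\times\P(U_M)$ general of the prescribed form, and split the double points into three batches: $m+1$ general double points on $L$ (the ``$L$-batch''); $m+1$ general double points on $M$ (the ``$M$-batch'' — these will play the role of the $m+1$ points off $L$ in Definition~\ref{th:defn}, since a general point of $M$ is not on $L$); and $\underline{s}(m,n-2)-(m+1)=\underline{s}(m,n-4)$ general double points on $L\cap M$. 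The totals match: there are $(m+1)+(m+1)+\bigl[\underline{s}(m,n-2)-(m+1)\bigr]=\underline{s}(m,n)$ double points in all, of which $\underline{s}(m,n-2)=\underline{s}(m,n)-(m+1)$ lie on $L$ and $m+1$ lie off $L$, so $Z$ (the union of these double points) is a specialization of the general configuration of Definition~\ref{th:defn}.

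Writing $Z$ for this specialized scheme, I would use the exact sequence coming from restriction to $M$,
\[
0\to H^0(\P^{m,n},\sI_{Z\cup L\cup M}(1,2))\to H^0(\P^{m,n},\sI_{Z\cup L}(1,2))\to H^0(M,\sI_{(Z\cup L)\cap M,\,M}(1,2)),
\]
which gives $h^0(\sI_{Z\cup L}(1,2))\le h^0(\sI_{Z\cup L\cup M}(1,2))+h^0(\sI_{(Z\cup L)\cap M,M}(1,2))$ (only left-exactness is needed, exactly as in the proofs of Propositions~\ref{th:splitting1} and~\ref{Rmm+1}). For the first term, $Z\cup L\cup M$ contains $L$, $M$, the $L$-batch and the $M$-batch, and these together form precisely a general configuration of the kind in Definition~\ref{th:defn'}; hence $h^0(\sI_{Z\cup L\cup M}(1,2))=0$ since $Q(m,n)$ is true. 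For the second term, since $L\not\subset M$ and $M$ is smooth, $(Z\cup L)\cap M=(Z\cap M)\cup(L\cap M)$, where $L\cap M=\P(V)\times\P(U_L\cap U_M)$ is of the required form and has codimension $2$ in $M\cong\P^{m,n-2}$, while $Z\cap M$ consists of the double points of $M$ at the $M$-batch points (general on $M$) together with the double points of $M$ at the $\underline{s}(m,n-4)$ points of the $L\cap M$-batch (general on $L\cap M$); the $L$-batch points lie off $M$ and contribute nothing. This is exactly the configuration in $\underline{R}(m,n-2)$ — note that $\underline{s}(m,n-4)=\underline{s}(m,n-2)-(m+1)$ is precisely the number of ``on-$L'$'' double points there — so $h^0(\sI_{(Z\cup L)\cap M,M}(1,2))$ equals the expected value for $\underline{R}(m,n-2)$.

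Combining, $h^0(\sI_{Z\cup L}(1,2))\le 0+\varepsilon$, where $\varepsilon\in\{0,1\}$ is the expected value for $\underline{R}(m,n-2)$; since $n$ and $n-2$ have the same parity, $\varepsilon$ is also the expected value for $\underline{R}(m,n)$ (both equal $1$ precisely when $m$ is even and $n$ is odd). Semicontinuity then upgrades this to the general configuration, so $\underline{R}(m,n)$ is true. The routine points to dispose of are that the scheme-theoretic trace on the smooth subvariety $M$ of a double point of $\P^{m,n}$ at a point of $M$ is a double point of $M$, and the arithmetic of the batch sizes and the parity cases. The one genuinely delicate step is the identification, after restriction, of $(Z\cup L)\cap M$ with a general instance of the $\underline{R}(m,n-2)$ configuration: this is what dictates the exact choice of batch sizes above, and it is precisely why the inductive step consumes both $Q(m,n)$ and the statement two steps down, $\underline{R}(m,n-2)$, rather than $\underline{R}(m,n-1)$.
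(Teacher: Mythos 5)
Your proof is correct and follows essentially the same route as the paper's: specialize $\underline{s}(m,n)-2(m+1)$ of the double points onto $L\cap M$ and $m+1$ onto $M$, restrict to $M$, kill the kernel term with $Q(m,n)$, and recognize the trace as the $\underline{R}(m,n-2)$ configuration. The batch sizes, the left-exact sequence, and the use of the two hypotheses all match the paper's argument, which simply records the same specialization more tersely.
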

\begin{proof}
Let $\underline{s}=\underline{s}(m,n)$, 
let $p_1, \dots, p_{\underline{s}-(m+1)} \in L$ and let $q_1, \dots, q_{m+1} \in \P^{m,n} \setminus L$.
Suppose that $p_1, \dots, p_{\underline{s}-2(m+1)} \in L \cap M$, $p_{\underline{s}-2m-1} \dots,
p_{\underline{s}-(m+1)} \in L \setminus (L\cap M)$ and $q_1, \dots, q_{m+1} \in M$.
Let $Z'=Z \cap M$. Then we have an exact sequence
\[
0 \rightarrow \sI_{Z \cup L \cup M}(1,2)
\rightarrow \sI_{Z \cup L} (1,2) \rightarrow
\sI_{Z'\cup(L\cap M), M}(1,2) \rightarrow 0.
\]
Taking cohomology gives rise to the following exact sequence:
\[
0 \rightarrow H^0 (\P^{m,n}, \sI_{Z \cup L \cup M}(1,2))
\rightarrow H^0 (\P^{m,n}, \sI_{Z \cup L }(1,2))
\rightarrow
 H^0 (M, \sI_{Z'\cup(L\cap M) , M}(1,2)).
\]
By the assumption that $Q(m,n)$ is true,
we have $\dim H^0 (\P^{m,n}, \sI_{Z \cup L \cup M}(1,2)) =0$.
Thus the following inequality holds:
\[
 \dim H^0 (\P^{m,n}, \sI_{Z \cup L }(1,2))
 \leq  \dim H^0 (M, \sI_{Z' \cup(L\cap M) , M}(1,2)).
\]
Hence if $\underline{R}(m,n-2)$ is true, then  so is $\underline{R}(m,n)$.
\end{proof}
\begin{lemma}
If $Q(m-2,n)$ and $Q(1,n)$ are true, then $Q(m,n)$ is also true.
\label{th:splitting4}
\end{lemma}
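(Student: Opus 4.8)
This is the two-subspace analogue of the Splitting Theorem (Theorem~\ref{th:splitting}), and the plan is to prove it in the same way, by splitting $V$ into a piece of dimension $m-1$ and a piece of dimension $2$. So I would write $V=V'\oplus V''$ with $\dim V'=m-1$ and $\dim V''=2$, put $m'=m-2$ and $m''=1$ (so that $m=m'+m''+1$), and set $L'=\P(V')\times\P(U_L)$, $L''=\P(V'')\times\P(U_L)$, $M'=\P(V')\times\P(U_M)$, $M''=\P(V'')\times\P(U_M)$. Since $Q(m,n)$ concerns a general configuration and the condition is open, it suffices to exhibit one configuration with $\dim W_{m,n}=(m+1){n+2\choose 2}$, and I would specialize $p_1,\dots,p_{m-1}$ to general points of $L'$, $p_m,p_{m+1}$ to general points of $L''$, $q_1,\dots,q_{m-1}$ to general points of $M'$, and $q_m,q_{m+1}$ to general points of $M''$.

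Next I would invoke the tangent-space identities from the proof of Theorem~\ref{th:splitting}: for $1\le i\le m-1$ one has $T_{p_i}(X_{m,n})=T_{p_i}(X_{m',n})\oplus Y_{p_i''}(X_{m'',n})$ for a suitable $p_i''\in X_{m'',n}$, while for $i\in\{m,m+1\}$ one has $T_{p_i}(X_{m,n})=Y_{p_i'}(X_{m',n})\oplus T_{p_i}(X_{m'',n})$, and likewise for the $q_i$ with $U_M$ in place of $U_L$. The key extra observation — and the only place where it matters that the points lie on $L$ (resp.\ $M$) rather than on an arbitrary locus of $X_{m,n}$ — is that all the residual $Y$-spaces produced this way already lie inside $V\otimes S_2(U_L)$ or $V\otimes S_2(U_M)$, hence contribute nothing new to $W_{m,n}$. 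Indeed, $p_i\in L$ means $p_i=[u_i\otimes v_i^2]$ with $v_i\in U_L$, so $Y_{p_i''}(X_{m'',n})=V''\otimes v_i^2\subseteq V''\otimes S_2(U_L)$ and $Y_{p_i'}(X_{m',n})=V'\otimes v_i^2\subseteq V'\otimes S_2(U_L)$; the residual $Y$-spaces coming from the $q_i$ lie in $V'\otimes S_2(U_M)$ or $V''\otimes S_2(U_M)$ for the same reason.

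Collecting the $V'$-part and the $V''$-part of the spanning set separately then gives $W_{m,n}\simeq W_{m',n}\oplus W_{m'',n}$, where $W_{m',n}\subseteq V'\otimes S_2(W)$ is exactly the space of Definition~\ref{th:defn'} attached to $Q(m-2,n)$ — spanned by $V'\otimes S_2(U_L)$, $V'\otimes S_2(U_M)$, $\sum_{i=1}^{m-1}T_{p_i}(X_{m',n})$ with $p_i\in L'$, and $\sum_{i=1}^{m-1}T_{q_i}(X_{m',n})$ with $q_i\in M'$ — and $W_{m'',n}\subseteq V''\otimes S_2(W)$ is exactly the space attached to $Q(1,n)$, with its two points on $L''$ and two on $M''$. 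By the hypotheses $Q(m-2,n)$ and $Q(1,n)$ each summand attains its expected (here, full ambient) dimension, so for this configuration $\dim W_{m,n}=(m-1){n+2\choose 2}+2{n+2\choose 2}=(m+1){n+2\choose 2}$; since $W_{m,n}=V\otimes S_2(W)$ is an open condition on the parameter space of $Q(m,n)$, it follows that $Q(m,n)$ is true. I do not expect any genuine obstacle: the only point requiring care is the bookkeeping of the previous two paragraphs, namely checking that every residual $Y$-term is absorbed into the appropriate $S_2(U_L)$ or $S_2(U_M)$ summand and that the two collected pieces are literally the spans of Definition~\ref{th:defn'} for the parameters $(m-2,n)$ and $(1,n)$.
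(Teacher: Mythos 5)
Your proof is correct and follows essentially the same route as the paper's: decompose $V=V'\oplus V''$ with $\dim V'=m-1$ and $\dim V''=2$, note that the residual $Y$-spaces produced by the splitting are absorbed into $V\otimes S_2(U_L)$ or $V\otimes S_2(U_M)$, and conclude $W_{m,n}\simeq W_{m-2,n}\oplus W_{1,n}$ so that the two hypotheses give the expected dimension. If anything, your bookkeeping of how the $2(m+1)$ points are distributed among $L'$, $L''$, $M'$, $M''$ is more explicit than the paper's own write-up.
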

\begin{proof}
Let $V'$ be a $(m-1)$-dimensional subspace of $V$ and let $V''$ be
a two-dimensional subspace of $V$. Suppose that
$V$ can be written as the direct sum of  $V' $ and $V''$.
Let $U= \langle V \otimes S_2(U_L), V \otimes S_2(U_M),T_p(X_{m,n}) \rangle$.
Suppose that $p=([u], [v]) \in \P^{m-2,n}= \P(V') \times \P(U_L)$. Then
$V \otimes v^2 \subset V \otimes S_2(W)$. Thus
\[
T_p(X_{m,n}) \equiv  T_p(X_{m-2,n}) \  (\mbox{mod $V \otimes S_2(U_L)$}).
\]
Similarly, one can prove that
\[
T_q(X_{m,n}) \equiv  T_q(X_{1,n}) \  (\mbox{mod $V \otimes S_2(U_L)$})
\]
if $q = ([u],[v]) \in \P(V'') \times \P(W)$.

This means that if $p_1, \dots, p_{m+1} \in \P(V') \times \P(W)$ and if
$q_1, \dots, q_{m+1} \in \P(V'') \times \P(W)$, then
\begin{eqnarray*}
&&\left\langle V \otimes S_2(U_L), \sum_{i=1}^{m+1}T_{p_i}(X_{m,n}),
\sum_{i=1}^{m+1}T_{q_i}(X_{m,n}) \right\rangle \\
&=& \left\langle V' \otimes S_2(U_L), \sum_{i=1}^{m+1}T_{p_i}(X_{m-2,n}) \right\rangle
\oplus  \left\langle V'' \otimes S_2(U_L), \sum_{i=1}^{m+1}T_{q_i}(X_{1,n}) \right\rangle.
\end{eqnarray*}
In other words,
$W_{m,n} \simeq W_{m-2,n} \oplus W_{1,n}$.
Thus if $Q(m-2,n)$ and $Q(1,n)$ are true, so is $Q(m,n)$.
\end{proof}
\begin{lemma}
Let $n \geq 3$.  Then $Q(1,n)$ and $Q(2,n)$ are true.
\label{th:m1and2}
\end{lemma}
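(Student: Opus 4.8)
The plan is to reduce the statement, for each fixed $m$, to a single finite-dimensional rank computation that does not depend on $n$, and then to carry out that computation for $m=1$ and $m=2$.

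First I would record what $Q(m,n)$ actually asks. By Definition~\ref{th:defn'}, $Q(m,n)$ holds exactly when $W_{m,n}=V\otimes S_2(W)$. As noted just before Definition~\ref{th:defn'}, since $L$ and $M$ are general the subspace $V\otimes S_2(U_L)+V\otimes S_2(U_M)$ has codimension $4(m+1)$ in $V\otimes S_2(W)$, and I would fix a basis $f_0,\dots,f_n$ of $W$ with $U_L=\langle f_2,\dots,f_n\rangle$ and $U_M=\langle f_0,f_1,f_4,\dots,f_n\rangle$. With these coordinates the images of $f_0f_2,f_0f_3,f_1f_2,f_1f_3$ form a basis of the quotient $S_2(W)/(S_2(U_L)+S_2(U_M))$, which is therefore canonically $\bar A\otimes\bar B$ with $\bar A=\langle f_0,f_1\rangle$ and $\bar B=\langle f_2,f_3\rangle$. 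Hence $Q(m,n)$ is equivalent to the assertion that the images of $\sum_{i=1}^{m+1}T_{p_i}(X_{m,n})$ and $\sum_{i=1}^{m+1}T_{q_i}(X_{m,n})$ span the $4(m+1)$-dimensional space $V\otimes\bar A\otimes\bar B$.

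Next I would compute those images. If $p_i=[u_i\otimes v_i^2]\in L$ then $v_i\in U_L$, so $V\otimes v_i^2$ and $u_i\otimes v_iU_L$ both lie in $V\otimes S_2(U_L)$ and die in the quotient, while the surviving part $u_i\otimes v_i\langle f_0,f_1\rangle$ has image $u_i\otimes\bar A\otimes\beta_i$, where $\beta_i\in\bar B$ is the $\langle f_2,f_3\rangle$-component of $v_i$. Symmetrically, for $q_j=[u_j'\otimes v_j'^2]\in M$ the image of $T_{q_j}(X_{m,n})$ is $u_j'\otimes\alpha_j\otimes\bar B$, with $\alpha_j\in\bar A$ the $\langle f_0,f_1\rangle$-component of $v_j'$. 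Because $n\geq3$, the points of $L$ and of $M$ vary in positive-dimensional families, so the resulting $u_i,u_j'\in V$, $\beta_i\in\bar B$ and $\alpha_j\in\bar A$ are general. Thus $Q(m,n)$, for every $n\geq3$, is equivalent to one and the same $n$-independent assertion: for general $u_1,\dots,u_{m+1},u_1',\dots,u_{m+1}'\in V$, $\beta_1,\dots,\beta_{m+1}\in\bar B$ and $\alpha_1,\dots,\alpha_{m+1}\in\bar A$,
\[
\sum_{i=1}^{m+1}u_i\otimes\bar A\otimes\beta_i \ +\ \sum_{j=1}^{m+1}u_j'\otimes\alpha_j\otimes\bar B \ =\ V\otimes\bar A\otimes\bar B .
\]
Since both sides have dimension $4(m+1)$, this is a maximal-rank condition. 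To finish, for $m=1$ the ambient space is $\C^2\otimes\C^2\otimes\C^2$ and for $m=2$ it is $\C^3\otimes\C^2\otimes\C^2$; in each case, by semicontinuity it is enough to exhibit one explicit choice of the $u_i,u_j',\beta_i,\alpha_j$ for which the sum is everything — an elementary rank check (an $8\times8$, resp.\ $12\times12$, determinant) which I would run in {\tt Macaulay2}, or verify by hand with explicit vectors in the style of Propositions~\ref{Rmm} and~\ref{Rmm+1}.

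The main obstacle is precisely that this last condition is equiabundant, so there is no room to spare: many of the obvious symmetric choices of points — e.g.\ taking several $\beta_i$ equal, or the $u_i$ equal to coordinate vectors — produce a proper subspace, and one must pick a configuration generic enough that all $4(m+1)$ pieces are linearly independent. Everything else (identifying the quotient $S_2(W)/(S_2(U_L)+S_2(U_M))$ and describing the images of the tangent spaces) is routine monomial bookkeeping, but it is exactly this bookkeeping that makes the problem independent of $n$ and reduces it to the two finite verifications above. (For $m\geq3$ one then obtains $Q(m,n)$ from $Q(1,n)$ and $Q(2,n)$ by Lemma~\ref{th:splitting4}, so $m=1,2$ are precisely the cases that must be done directly.)
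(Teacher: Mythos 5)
Your argument is correct, and it takes a genuinely different route from the paper's. The paper stays in the ambient space $V\otimes S_2(W)$, introduces a third codimension-two subvariety $K=\P(V)\times\P(\langle f_0,f_1,f_{n-1},f_n\rangle)$ satisfying $H^0(\sI_{L\cup M\cup K}(1,2))=0$, specializes two of the points onto $K\cap L$ and two onto $K\cap M$, and uses the restriction exact sequence to $K$ to reduce $Q(m,n)$ to $Q(m,3)$, which is then verified by a \texttt{Macaulay2} rank computation. You instead pass to the quotient by $V\otimes S_2(U_L)+V\otimes S_2(U_M)$ from the outset; your identification of that quotient with $V\otimes\bar A\otimes\bar B$ and of the images of the tangent spaces with $u_i\otimes\bar A\otimes\beta_i$ and $u_j'\otimes\alpha_j\otimes\bar B$ is correct (the cross terms $wf_j$ with $w\in\langle f_4,\dots,f_n\rangle$ and $j\in\{0,1\}$ do die in $S_2(U_M)$), and the induced parameters are general because the projections $U_L\to\bar B$ and $U_M\to\bar A$ are surjective. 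What this buys is that the independence of $n$ becomes structural rather than a by-product of a degeneration, and the final check shrinks to an $8\times8$ (resp.\ $12\times12$) determinant: for $m=1$ the choice $u_1=e_0$, $u_2=e_1$, $\beta_1=f_2$, $\beta_2=f_3$, $u_1'=e_0+e_1$, $u_2'=e_0-e_1$, $\alpha_1=f_0$, $\alpha_2=f_1$ already annihilates every coordinate of a putative orthogonal tensor, and for $m=2$ one can even avoid computation altogether, since a nonzero $\phi\in V^*\otimes\bar A^*\otimes\bar B^*$ orthogonal to all six planes makes each of the $2\times2$ matrices $\phi(u_i)$, $\phi(u_j')$ singular, so the six general points $[u_i],[u_j']$ would lie on the conic $\det\phi=0$ in $\P(V)=\P^2$, and the degenerate possibility $\det\phi\equiv0$ is excluded because the $\alpha_j$ (resp.\ the $\beta_i$) are three distinct points of a $\P^1$. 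The only item you still owe is the explicit witness for $m=2$ (your warning is apt: several symmetric-looking configurations do drop rank), which leaves your proof at the same level of completeness as the paper's, whose base case likewise rests on an unprinted \texttt{Macaulay2} script.
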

\begin{proof}
Here we only prove that $Q(1,n)$ is true for any $n \geq 3$,
because the proof of the remaining case follows the same path.

Without loss of generality, we may assume that 
$U_L=\langle f_0, \dots, f_{n-2}\rangle$ and $U_M = \langle f_2, \dots, f_ n \rangle$. 
Let $U_K= \langle f_0, f_1, f_{n-1}, f_n\rangle$  
and let  $K = \P(V) \times \P(U_K)$.  
Note that  $S_2(W) = \langle S_2(U_L),  S_2(U_M),  S_2(U_K)\rangle$, and so 
\[
 V \otimes S_2(W) = \langle V\otimes S_2(U_L), V\otimes S_2(U_M), V \otimes S_2(U_K)\rangle. 
\]
In other words, $H^0 (\P^{1,n}, \sI_{L \cup M \cup K}(1,2) )=0$. 
Specializing $p_1$ and $p_2$ on $K \cap L$ 
and $q_1$ and $q_2$ on $K \cap M$ yields the following short exact sequence, where $Z=\{p_1^2,p_2^2,q_1^2,q_1^2\}$:
\[
 0 \rightarrow \sI_{Z \cup L \cup M \cup K}(1,2) 
 \rightarrow \sI_{Z \cup L \cup M}(1,2) 
 \rightarrow \sI_{(Z \cup L \cup M) \cap K, K}(1,2) 
 \rightarrow 0. 
\]
Since $H^0 (\P^{1,n}, \sI_{L \cup M \cup K}(1,2) )$ vanishes, 
so does $H^0 (\P^{1,n}, \sI_{Z \cup L \cup M \cup K}(1,2) )$. 
Thus, in order to show that $Q(1,n)$ is true,
it is enough to prove that $Q(1,3)$ is true.  

Let $p_1$ and $p_2$ be general points of $L$ and let $q_1$ and $q_2$ be general points of $M$.
To prove that $Q(1,3)$ is true, we directly show that
\begin{eqnarray}
W_{1,3} = \left\langle
V \otimes S_2(U_L), V \otimes S_2(U_M), T_{p_1}(X_{1,3}),
T_{p_2}(X_{1,3}), T_{q_1}(X_{1,3}), T_{q_2}(X_{1,3})
\right\rangle.
\label{eq:span}
\end{eqnarray}
Recall that  $T_p(X_{1,3})$ for $p=[u \otimes v^2]$ is isomorphic to
$V \otimes v^2+u \otimes v W$.
Thus we can check equality~(\ref{eq:span}) as follows.
Let $S= \C[e_0,e_1,f_0, \dots, f_3]$. Choose randomly $u_1, \dots, u_4
\in V$, $v_1, v_2 \in U_L$ and $v_3,v_4 \in U_M$. For each $i \in \{1,
\dots, 4\}$, let $T_i$ be the ideal of $S$  generated by $u_i \otimes
v_i^2, e_1 \otimes v_i^2, u_i \otimes v_i f_0, \dots, u_i
\otimes v_if_3$. Let $I_L$ and $I_M$ be the ideals of $S$ generated by
$V \otimes S_2(U_L)$ and $V \otimes S_2(U_M)$ respectively and let
$I=\sum_{i=1}^4 T_i+I_L+I_M$. The minimal set of generators for $I$ can be
computed in {\tt Macaulay2} and we checked that the members of the minimal generating set
form a basis for $V \otimes S_2(W)$.
\end{proof}
\begin{proposition}
Let $n \geq 3$. Then $Q(m,n)$ is true for any $m$.
\label{th:Qmn}
\end{proposition}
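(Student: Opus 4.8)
The plan is to argue by induction on $m$, using Lemma~\ref{th:splitting4} as the inductive step and Lemma~\ref{th:m1and2} to seed the induction. Concretely, the two base cases $m=1$ and $m=2$ are exactly the content of Lemma~\ref{th:m1and2}, which applies since $n\ge 3$. For the inductive step, suppose $m\ge 3$ and that $Q(m',n)$ is true for every $m'$ with $1\le m'<m$. Since $m\ge 3$ we have $m-2\ge 1$, so the inductive hypothesis gives that $Q(m-2,n)$ is true, while $Q(1,n)$ is true by Lemma~\ref{th:m1and2}. Lemma~\ref{th:splitting4}—whose hypothesis is precisely the truth of $Q(m-2,n)$ and $Q(1,n)$, corresponding to the splitting $V=V'\oplus V''$ with $\dim V'=m-1$ and $\dim V''=2$—then yields that $Q(m,n)$ is true. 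This closes the induction and proves the proposition for all $m$.

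Essentially no new work is required here: the real substance is already packaged in Lemma~\ref{th:splitting4} (the two-codimensional analogue of the splitting theorem, Theorem~\ref{th:splitting}) and in Lemma~\ref{th:m1and2} (the \texttt{Macaulay2}-assisted verification of the two small cases $Q(1,3)$ and $Q(2,3)$, together with the reduction of $Q(1,n)$ and $Q(2,n)$ to those cases via the auxiliary subvariety $K$). The only point deserving a word of care is bookkeeping on the parameter ranges: one must make sure that when we invoke Lemma~\ref{th:splitting4} the arguments $m-2$ and $1$ both fall within the range already handled, which is guaranteed by $m\ge 3$, and that the case $m=2$ is taken from Lemma~\ref{th:m1and2} rather than from the splitting (the latter would require $Q(0,n)$, which we neither prove nor need).

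If a cleaner presentation is preferred, one can phrase the induction by parity: for $m$ odd, repeatedly apply Lemma~\ref{th:splitting4} to peel off two-dimensional summands until only $Q(1,n)$ remains, and for $m$ even until only $Q(2,n)$ remains; in either case the required small cases are supplied by Lemma~\ref{th:m1and2}. I do not anticipate any genuine obstacle in this argument—the potential pitfalls are purely notational (keeping straight which lemma covers which range of $m$), and there is no delicate geometric or cohomological input beyond what the earlier results already provide.
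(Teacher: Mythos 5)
Your proposal is correct and is essentially identical to the paper's own proof: a two-step induction on $m$ seeded by the cases $m=1,2$ from Lemma~\ref{th:m1and2}, with the inductive step supplied by Lemma~\ref{th:splitting4} applied to $Q(m-2,n)$ and $Q(1,n)$. The parity remark and the bookkeeping caveats are sound but add nothing beyond what the paper already does.
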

\begin{proof}
The proof is by two-step induction on $m$.
Since we have already proved this proposition for $m=1$ and $2$, we may assume that $m \geq 3$.
By induction hypothesis, $Q(m-2,n)$ is true. Since $Q(1,n)$ is true by Lemma~\ref{th:m1and2},
it immediately follows from Lemma~\ref{th:splitting4} that $Q(m,n)$ is true.
\end{proof}
As we already mentioned, the following is an immediate consequence of
Proposition~\ref{th:Qmn}:
\begin{corollary}
Let $m \leq n$. Then $\underline{R}(m,n)$ is true.
\label{th:Rmn}
\end{corollary}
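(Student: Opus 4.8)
The plan is to carry out the two-step induction on $n$ that was already set up immediately after Proposition~\ref{th:splitting3}. The base cases are $\underline{R}(m,m)$ and $\underline{R}(m,m+1)$, which are exactly Propositions~\ref{Rmm} and~\ref{Rmm+1}, so those require no further work. For the inductive step, suppose $n \geq m+2$ and that $\underline{R}(m,n-2)$ holds; I need $\underline{R}(m,n)$. Since $n-2 \geq m \geq 1$, the hypothesis $\underline{R}(m,n-2)$ is available, and since $n \geq m+2 > m \geq 1$ we also have $n \geq 3$, so Proposition~\ref{th:Qmn} applies and tells us that $Q(m,n)$ is true. Feeding both of these into Proposition~\ref{th:splitting3} yields $\underline{R}(m,n)$ directly, closing the induction.

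First I would check the range of $n$ carefully: the statement is for $m \leq n$, and the induction on $n$ in steps of $2$ starting from $n = m$ and $n = m+1$ covers exactly the integers $n \geq m$ of both parities, so every $n$ with $m \leq n$ is reached. Then at each inductive step the only nontrivial thing to verify is that the two inputs to Proposition~\ref{th:splitting3} are legitimately available: that $\underline{R}(m,n-2)$ is the immediately preceding case in the induction (it is, since $n - 2 \geq m$), and that $Q(m,n)$ holds (it does, by Proposition~\ref{th:Qmn}, whose hypothesis $n \geq 3$ is satisfied because $n \geq m+2 \geq 3$).

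There is essentially no obstacle here: all the real content has been front-loaded into Propositions~\ref{Rmm}, \ref{Rmm+1}, \ref{th:splitting3}, and~\ref{th:Qmn}. The only point that needs a moment's care is the boundary of the induction — confirming that the base cases $\underline{R}(m,m)$ and $\underline{R}(m,m+1)$ together with the step $n-2 \rightsquigarrow n$ exhaust all $n \geq m$, and that $Q(m,n)$ is never invoked outside its proven range $n \geq 3$. Both are immediate, so the proof is a short bookkeeping argument assembling the earlier results.
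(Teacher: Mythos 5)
Your proposal is correct and follows exactly the same route as the paper's own proof: two-step induction on $n$ with base cases $\underline{R}(m,m)$ and $\underline{R}(m,m+1)$ from Propositions~\ref{Rmm} and~\ref{Rmm+1}, and the inductive step supplied by Propositions~\ref{th:splitting3} and~\ref{th:Qmn}. Your extra bookkeeping about parity coverage and the $n\geq 3$ hypothesis for $Q(m,n)$ is the same check the paper makes implicitly.
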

\begin{proof}
The proof is by induction on $n$. By~Proposition~\ref{Rmm},
$\underline{R}(m,m)$ is true. The statement $\underline{R}(m,m+1)$
is also true by Proposition~\ref{Rmm+1}. Assume that
$\underline{R}(m,n-2)$ is true for some $n\geq m$. We may also
assume that $n\ge3$. From Proposition~\ref{th:splitting3} and
Proposition~\ref{th:Qmn} it follows, therefore, that
$\underline{R}(m,n)$ is true. Thus we have completed the proof.
\end{proof}
\begin{theorem}
Let $k$ and $\underline{s}=\underline{s}(m,n)$ be as given in Definition~\ref{th:defn0} and
suppose that $m \leq n+2$. Then $T(m,n;1,2;s)$ is true for any $s \leq \underline{s}$.
\label{th:main}
\end{theorem}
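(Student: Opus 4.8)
The plan is to assemble Theorem~\ref{th:main} from the results already proved in this section, so that what remains is essentially bookkeeping. First, since $(m,n;1,2;\underline{s})$ is subabundant whenever $m\le n+2$, monotonicity in $s$ for subabundant tuples reduces the claim to showing that $T(m,n;1,2;\underline{s}(m,n))$ is true; all smaller values of $s$ then follow automatically. So I would fix $s=\underline{s}(m,n)$ and argue by induction on $n$ advancing in steps of two (hence the parity of $n$ stays fixed along each chain), anchored at the two values $n=m-2$ and $n=m-1$.

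For the base cases: $\underline{s}(m,m-2)=0$ by Definition~\ref{th:defn0}, so $T(m,m-2;1,2;0)$ holds trivially; and $T(m,m-1;1,2;\underline{s}(m,m-1))$ is true by Example~\ref{th:example1} (applied with its parameter $n$ replaced by $m-1$, which gives $T(m,m-1;1,2;s)$ for all $s\le\lfloor m/2\rfloor+1$), once one checks by a short computation from Definition~\ref{th:defn0} that $\underline{s}(m,m-1)\le\lfloor m/2\rfloor+1$, with equality when $m$ is odd. Because every $n\ge m-2$ is reached from $n=m-2$ or $n=m-1$ by repeatedly adding $2$, these two anchors suffice.

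For the inductive step, suppose $n\ge m$ and that $T(m,n-2;1,2;\underline{s}(m,n-2))$ is true. Definition~\ref{th:defn} records the identity $\underline{s}(m,n-2)=\underline{s}(m,n)-(m+1)$, and Corollary~\ref{th:Rmn} shows that $\underline{R}(m,n)$ is true since $m\le n$; feeding these two facts into Proposition~\ref{th:splitting1} yields that $T(m,n;1,2;\underline{s}(m,n))$ is true. This closes the induction and finishes the proof.

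I do not expect a genuine obstacle here: the substantive work has already been carried out in Propositions~\ref{Rmm}, \ref{Rmm+1} and \ref{th:splitting3} and the $Q(m,n)$-machinery (Lemmas~\ref{th:splitting4} and \ref{th:m1and2}, Proposition~\ref{th:Qmn}), all of which funnel into Corollary~\ref{th:Rmn}, together with the reduction of Proposition~\ref{th:splitting1}. The only points that need care are the two arithmetic checks above — the identity $\underline{s}(m,n-2)=\underline{s}(m,n)-(m+1)$ in each of the three parity regimes of Definition~\ref{th:defn0}, and the base-case bound $\underline{s}(m,m-1)\le\lfloor m/2\rfloor+1$ — together with the routine observation that Corollary~\ref{th:Rmn} is invoked only when $m\le n$, the cases $n\in\{m-2,m-1\}$ being treated directly.
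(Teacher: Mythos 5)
Your proposal is correct and follows exactly the same route as the paper's own proof: reduce to $s=\underline{s}(m,n)$ by subabundance, anchor the two-step induction on $n$ at $n=m-2$ (trivial) and $n=m-1$ (Example~\ref{th:example1}), and advance via Proposition~\ref{th:splitting1} together with Corollary~\ref{th:Rmn}. The arithmetic checks you flag (the identity $\underline{s}(m,n-2)=\underline{s}(m,n)-(m+1)$ and the base-case bound) are both correct and are exactly what the paper relies on.
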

\begin{proof}
Since $(m,n;1,2;s)$ is subabundant, it is enough to prove that
$T(m,n;1,2;\underline{s})$ is true.
The proof is by induction on $n$.
If $n=m-2$, then $\underline{s}(m,m-2)=0$. Thus  $T(m,m-2;1,2;0)$ is clearly true.
If $n=m-1$,  then $\underline{s}(m,m-1)=\left\lfloor (m-1)/2\right\rfloor+1$.
By Example~\ref{th:example1},  $T(m,m-1;1,2;s)$ is true
for any $s\le \left\lfloor m/2\right\rfloor+1$.

Now suppose that the statement is true for some $m \leq n$.
By Proposition~\ref{th:splitting1},
$T(m,n;1,2;\underline{s})$ reduces to $T(m,n-2;1,2;\underline{s}-(m+1))$ and $\underline{R}(m,n)$.
By Corollary~\ref{th:Rmn}, $\underline{R}(m,n)$ is true for any
$m\leq n$. It follows therefore that $T(m,n;1,2;\underline{s})$ is true, which completes the proof.
\end{proof}
Define a function $r(m,n)$ as follows:
\[
r(m,n) =
\left\{
\begin{array}{ll}
m^3-2m & \mbox{if $m$ is even and if $n$ is odd;} \\
\displaystyle \frac{(m-2)(m+1)^2}{2} & \mbox{otherwise.} \\
\end{array}
\right.
\]
\begin{corollary}\label{cor:subabundant}
Suppose that $n > r(m,n)$. Then $T(m,n;1,2;s)$ is true if
\[s \leq
\left\lfloor\frac{(m+1){n+2 \choose 2}}{m+n+1} \right\rfloor. \]
\end{corollary}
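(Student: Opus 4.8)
The plan is to deduce the corollary directly from Theorem~\ref{th:main}: it suffices to show that, under the hypothesis $n>r(m,n)$, the bound $\underline{s}(m,n)$ of Definition~\ref{th:defn0} equals $q(m,n):=\left\lfloor\frac{(m+1){n+2 \choose 2}}{m+n+1}\right\rfloor$. Granting this, together with the (easily checked) fact that $n>r(m,n)$ forces $m\le n+2$, Theorem~\ref{th:main} immediately yields that $T(m,n;1,2;s)$ is true for all $s\le q(m,n)$. So the entire proof reduces to an elementary arithmetic comparison.

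First I would rewrite $q(m,n)$ in a form adapted to division by $m+n+1$. Setting $N=m+n+1$, so that $n+1=N-m$ and $n+2=N-m+1$, one computes $(n+1)(n+2)=N^2-(2m-1)N+m(m-1)$, whence
\[
\frac{(m+1){n+2 \choose 2}}{m+n+1}=\frac{(m+1)(n+1)(n+2)}{2(m+n+1)}=\frac{(m+1)(n-m+2)}{2}+\frac{(m-1)m(m+1)}{2(m+n+1)}.
\]
On the other hand, a case-by-case inspection of Definition~\ref{th:defn0} shows that in all three cases $\underline{s}(m,n)=\left\lfloor\frac{(m+1)(n-m+2)}{2}\right\rfloor$: when $n$ is even, or when $m$ and $n$ are both odd, the quantity $\frac{(m+1)(n-m+2)}{2}$ is already an integer and equals $\underline{s}(m,n)$, while when $m$ is even and $n$ is odd it is a half-integer equal to $\underline{s}(m,n)+\frac12$. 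Since the correction term $\varepsilon:=\frac{(m-1)m(m+1)}{2(m+n+1)}$ is nonnegative, one always has $q(m,n)\ge\underline{s}(m,n)$, and the task is to determine when equality holds.

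It then remains to show that $n>r(m,n)$ is exactly the condition under which $\left\lfloor\frac{(m+1)(n-m+2)}{2}+\varepsilon\right\rfloor=\left\lfloor\frac{(m+1)(n-m+2)}{2}\right\rfloor$. When $\frac{(m+1)(n-m+2)}{2}$ is an integer (the cases $n$ even, or $m$ and $n$ both odd) this says $\varepsilon<1$, i.e.\ $2(m+n+1)>m^3-m$, i.e.\ $n>\frac{(m-2)(m+1)^2}{2}=r(m,n)$. When $\frac{(m+1)(n-m+2)}{2}$ is a half-integer ($m$ even, $n$ odd) it says $\frac12+\varepsilon<1$, i.e.\ $(m-1)m(m+1)<m+n+1$, i.e.\ $n>m^3-2m-1$; since $n$ is odd and $m^3-2m$ is even, this is the same as $n>m^3-2m=r(m,n)$. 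In every case we obtain $q(m,n)=\underline{s}(m,n)$, and Theorem~\ref{th:main} completes the proof. I do not anticipate any genuine obstacle here: the content is entirely elementary, and the only care required is the parity bookkeeping needed to identify $\underline{s}(m,n)$ with $\left\lfloor\frac{(m+1)(n-m+2)}{2}\right\rfloor$ and to collapse the two inequalities on $\varepsilon$ into the single uniform bound $n>r(m,n)$.
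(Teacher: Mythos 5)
Your proof is correct and follows essentially the same route as the paper: both reduce the corollary to the arithmetic identity $q(m,n)=\underline{s}(m,n)$ under the hypothesis $n>r(m,n)$ and then invoke Theorem~\ref{th:main}. The only difference is that you carry out in full the floor/parity computation that the paper dismisses as ``straightforward to show,'' and your bookkeeping (including the observation that $n$ odd and $m^3-2m$ even lets the strict inequality be shifted by one) checks out.
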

\begin{proof}
Since $(m,n;1,2;s)$ is subabundant, it suffices to show that
$T(m,n;1,2;s)$ is true for
$s = \left\lfloor \frac{(m+1){n+2 \choose 2}}{m+n+1} \right\rfloor$.
Note that
\[
 s=
 \left\{
 \begin{array}{ll}
 (m+1)k-\frac{(m-2)(m+1)}{2} + \left\lfloor \frac{m^3-m}{2(m+n+1)}\right\rfloor & \mbox{if $n$ is even;}  \\
 (m+1)k-\frac{(m-3)(m+1)}{2} +\left\lfloor \frac{m^3-m}{2(m+n+1)}\right\rfloor   & \mbox{if $m$ and $n$ are  odd;}  \\
 (m+1)k-\frac{(m-3)(m+1)+1}{2} + \left\lfloor \frac{n+m^3+2}{2(m+n+1)}\right\rfloor   & \mbox{otherwise.}
 \end{array}
 \right.
\]
It is straightforward to show that if $n > r(m,n)$, then $s=\underline{s}(m,n)$.
Thus it follows immediately from Theorem~\ref{th:main} that $T(m,n;1,2;s)$ is true.
\end{proof}
\begin{remark}
If $m=1$, then $r(1,n)<0$. Since $\underline{s}(1,n)=n+1$, it follows that $T(1,n;1,2;n+1)$ is true.
Since $(1,n;1,2;n+1)$ is equiabundant, $T(1,n;1,2;s)$ is therefore true for any $s$.
\end{remark}
\section{Segre-Veronese varieties $\P^m \times \P^n$ embedded by $\cO(1,2)$: Superabundant Case}
\label{sec:superabundant}
In this section, we keep the same notation as in Section~\ref{sec:subabundant}.
Let $k = \left\lfloor n/2 \right\rfloor$ and let
\[
 \bar{s}(m,n)=
 \left\{
 \begin{array}{ll}
 (m+1)k+1 & \mbox{if $n$ is even;}  \\
 (m+1)k+ 3  & \mbox{otherwise.}
 \end{array}
 \right.
\]
It is straightforward to show that $(m,n;1,2;\bar{s}(m,n))$ is superabundant.
The main  goal of this section is to prove that
$T(m,n;1,2;\bar{s}(m,n))$ is true, which implies that  $T(m,n;1,2;s)$  is true for any $s \geq \bar{s}(m,n)$.
\begin{definition}
Let $\bar{s}=\bar{s}(m,n)$, 
let $p_1, \dots, p_{\bar{s}-(m+1)}$ be general points of  $L$, let $q_1, \dots, q_{m+1}$ be general points
of $\P^{m,n} \setminus L$
and let $\overline{V}_{m,n}$ be the vector space  $\left\langle V \otimes S_2(U_L), \sum_{i=1}^{\bar{s}-(m+1)} T_{p_i}(X_{m,n}), \sum_{i=1}^{m+1} T_{q_i}(X_{m,n})
\right\rangle$.
Then the following inequality holds:
\[
\dim \overline{V}_{m,n} \leq   (m+1){n+2 \choose 2}.
\]
We say that {\it $\overline{R}(m,n)$ is true} if the equality holds.
\end{definition}
\begin{remark}
In the same way as in the proofs
of Propositions~\ref{th:splitting1} and \ref{th:splitting3}, one can prove the following:
\begin{itemize}
\item[(i)] If $\overline{R}(m,n)$ and $T(m,n-2;1,2;\bar{s}(m,n-2))$ are true,
then $T(m,n;1,2;\bar{s}(m,n))$ is true.
\item[(ii)] If $Q(m,n)$ and $\overline{R}(m,n-2)$ are true, then  $\overline{R}(m,n)$ is true.  In particular,
if $\overline{R}(m,n-2)$ is true, then  $\overline{R}(m,n)$ is true, because $Q(m,n)$ is true for $n \geq 3$
by Proposition \ref{th:Qmn}. 
\end{itemize}
\label{th:twoRemarks}
\end{remark}
\begin{definition}
Suppose that $(m,n)\not=(1,1)$. A $4$-tuple $(m,n;1,d)$ is said to be {\it balanced} if
\[
 m \leq  {n+d \choose d} -n.
\]
Otherwise, we say that $(m,n;1,d)$ is {\it unbalanced}.
\label{th:unbalance}
\end{definition}
\begin{remark}\label{rem:unbalanced}
The notion of ``unbalanced" was first introduced for Segre varieties
(see for example \cite{CGG3, AOP}).
Then it was extended to Segre-Veronese varieties in
\cite{CGG1}. In the same paper it is also proved that
if $(m,n;1,d)$ is unbalanced, then $T(m,n;1,d;s)$ fails if and only if
\begin{eqnarray}
 {n+d \choose d} -n < s < \min \left\{ m+1, {n+d \choose d}\right\}.
 \label{eq:unbalanced}
\end{eqnarray}
In particular, $T(m,2;1,2;m+1)$ is true if $m \geq 5$ and $T(m,3;1,2;m+1)$ is true if $m \geq 8$.

Here we would like to briefly explain why if $s$ satisfies the above inequalities,
then $\sigma_s(X_{m,n})$ is defective.
Let $p_1, \dots, p_s$ be generic points in $X_{m,n}$.
By assumption, we have  $s < n+1$. Thus there is a proper subvariety of $\P^{m,n}$ of type
$\P^{s-1,n}$ that  contains $p_1, \dots, p_s$.  Thus we have
\begin{eqnarray*}
 \dim \sigma_s(X_{m,n}) &\leq &s(\dim \P^{m,n}-\dim \P^{s-1,n})+s {n+d \choose d} \\
 &=& s \left[ {n+d \choose d} + m+1-s\right].
\end{eqnarray*}
It is straightforward to show that if $s$ fulfills inequalities (\ref{eq:unbalanced}), then
\[
s \left[ {n+d \choose d} + m+1-s\right] < \min \left\{s(m+n+1), \ (m+1){n+d \choose d} \right\}.
\]
Thus $\sigma_s(X_{m,n})$ is defective.  This also says that, for such an $s$,
the expected dimension of $\sigma_s(X_{m,n})$ is $s \left[ {n+d \choose d} + m+1-s\right]$.
\end{remark}
\begin{lemma}

$\mathrm{(i)}$ If $m \geq 3$, then $\overline{R}(m,n)$ is true for any $n \geq 2$;

$\mathrm{(ii)}$ $\overline{R}(2,n)$ is true for any $n \geq 3$.
\label{th:Rbarmn}
\end{lemma}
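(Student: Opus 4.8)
The plan is to reduce both statements to a finite list of base cases by two-step induction on $n$. By Remark~\ref{th:twoRemarks}(ii)---valid because $Q(m,n)$ is true for $n\ge 3$ (Proposition~\ref{th:Qmn})---the truth of $\overline{R}(m,n-2)$ yields that of $\overline{R}(m,n)$ whenever $n\ge 3$. Since $\overline{R}(m,n)$ requires $\bar{s}(m,n)\ge m+1$, which for $m\ge 3$ holds exactly when $n\ge 2$ in the even case and $n\ge 3$ in the odd case, the induction leaves for part~(i) only the base cases $\overline{R}(m,2)$ and $\overline{R}(m,3)$ for all $m\ge 3$, and for part~(ii) only $\overline{R}(2,3)$ and $\overline{R}(2,4)$.

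For part~(ii), and for any fixed value of $m$ in part~(i), these base cases are finitely many, and I would verify each of them exactly as in the proof of Lemma~\ref{th:m1and2}. Using the fat-point reformulation (Remark~\ref{th:fatPoints} and the reformulation of $\underline{R}$ that follows Definition~\ref{th:defn}, which applies verbatim to $\overline{R}$), $\overline{R}(m,n)$ is equivalent to $H^0(\P^{m,n},\sI_{Z\cup L}(1,2))=0$, where $Z$ is the prescribed union of double points; one writes down the defining ideal of $Z\cup L$ from coordinates chosen randomly over $\Q$ and checks in {\tt Macaulay2} that its minimal generators span $V\otimes S_2(W)$. (For instance $\overline{R}(2,3)$ also follows from $\overline{R}(2,1)$, which amounts to $T(2,1;1,2;3)$ and hence holds by Example~\ref{th:(m,1;1,2;3)}, together with $Q(2,3)$.)

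The substantive part of~(i) is $\overline{R}(m,2)$ and $\overline{R}(m,3)$ for unbounded $m$, which I would obtain by a descent on $m$ modeled on the proof of Lemma~\ref{th:splitting4}. Write $V=V'\oplus V''$ with $\dim V'=m-1$ and $\dim V''=2$, and arrange the $\bar{s}$ double points so that $\bar{s}(m-2,n)$ of them, with $\P(V)$-coordinate in $\P(V')$, form an $\overline{R}(m-2,n)$-configuration, while the remaining $\bar{s}(m,n)-\bar{s}(m-2,n)$ have $\P(V)$-coordinate in $\P(V'')$. For a point $p=([u],[v])$ with $u\in V'$: if $p$ lies on $L$ its quadratic part $V\otimes v^2$ is absorbed into $V\otimes S_2(U_L)$ and contributes nothing to the $\P(V'')$-factor; if $p$ lies off $L$ then $T_p(X_{m,n})=T_p(X_{m-2,n})\oplus(V''\otimes v^2)$, so it contributes $T_p(X_{m-2,n})$ to the $\P(V')$-factor and the $Y$-space $V''\otimes v^2$ to the $\P(V'')$-factor. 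Hence $\overline{V}_{m,n}$ splits as the direct sum of a copy of $\overline{V}_{m-2,n}$ sitting in $V'\otimes S_2(W)$ and the subspace of $V''\otimes S_2(W)$ spanned by $V''\otimes S_2(U_L)$, the tangent spaces at the points assigned to $\P(V'')$, and the leftover $Y$-spaces $V''\otimes v^2$; the first summand contains an $\overline{R}(m-2,n)$-configuration (plus a few harmless extra $Y$-spaces), and the second is an $S(1,n;1,2;\cdot;\cdot)$-type space which I would handle with Theorem~\ref{th:splitting} and Remark~\ref{easyremark}. The point counts work out so that the second summand fills $V''\otimes S_2(W)$ precisely when $m\ge 3$; together with the inductive hypothesis $\overline{R}(m-2,n)$ this gives $\overline{R}(m,n)$, and the descent terminates at $\overline{R}(1,n)$ or $\overline{R}(2,n)$ for $n\in\{2,3\}$, which are finitely many and are checked by {\tt Macaulay2} as in the previous paragraph.

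The main obstacle is exactly this $\P(V'')$-factor: one must check that $V''\otimes S_2(U_L)$, the tangent spaces of the points assigned to $\P(V'')$, and the leftover $Y$-spaces together span $V''\otimes S_2(W)$. This $S(1,n;1,2;\cdot;\cdot)$ statement is superabundant but only barely (for $n=3$ the count is tight), so one genuinely needs the spilled $Y$-spaces to be in sufficiently general position, and one must carry the accompanying bookkeeping---how many points on $L$ versus off it, in $\P(V')$ versus $\P(V'')$---consistently through both parities of $n$ and through the base cases $n=2,3$. An alternative that stays closer to the residuation techniques used elsewhere in the paper is to restrict to a divisor $G=\P^{m-1}\times\P^n$ of class $\cO(1,0)$: this replaces the $\cO(1,2)$ problem on $\P^{m,n}$ by an $\overline{R}$-type problem on $\P^{m-1,n}$, plus an $\cO(0,2)$ problem---quadrics in $W$ vanishing on $\P(U_L)$ and passing through the residual points---which vanishes once the number of double points is at least $2n+1$, i.e. again once $m\ge 3$; here the crux is instead to split the double points between $L$ and its complement so that both pieces come out right.
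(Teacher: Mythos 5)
Your reduction to the base cases $n\in\{2,3\}$ (via $Q(m,n)$ and Remark~\ref{th:twoRemarks}(ii)) and your treatment of part~(ii) and of any \emph{fixed} $m$ by direct {\tt Macaulay2} verification match the paper. The gap is in the only part of the lemma that involves infinitely many cases: $\overline{R}(m,2)$ and $\overline{R}(m,3)$ for unbounded $m$. Your proposed descent on $m$ hinges on a spanning statement for the $\P(V'')$-factor, namely that $V''\otimes S_2(U_L)$ together with the tangent spaces of the points assigned to $\P(V'')$ and the spilled $Y$-spaces fills $V''\otimes S_2(W)$. This is not an instance of $S(1,n;1,2;s;t)$ because of the extra subspace $V''\otimes S_2(U_L)$, so Theorem~\ref{th:splitting} and Remark~\ref{easyremark} do not apply to it as stated; it would need its own lemma. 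Moreover, as you yourself observe, for $n=3$ the dimension count for this factor is exactly tight at the bottom of the descent ($6+2\cdot 5+2(m-1)=2m+14$ against $20$ when $m=3$), so there is no slack and the genericity of the spilled $Y$-spaces must actually be established. As written, the hardest and infinite part of the lemma rests on an unproved auxiliary statement.

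The paper avoids all of this with an observation you missed: for $n\in\{2,3\}$ and $m$ large, $(m,n;1,2)$ is unbalanced, $(m,n;1,2;m+1)$ is superabundant, and $T(m,n;1,2;m+1)$ is already known to be true (Remark~\ref{rem:unbalanced}, i.e.\ $m\geq 5$ for $n=2$ and $m\geq 8$ for $n=3$). Since the $\overline{R}(m,n)$ configuration contains $m+1$ general points off $L$ whose tangent spaces alone then span all of $V\otimes S_2(W)$, the equality $\dim \overline{V}_{m,n}=(m+1)\binom{n+2}{2}$ is automatic for $m\geq 8$, and only the finitely many cases $3\leq m\leq 7$, $n\in\{2,3\}$ remain for the computer. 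If you want to salvage your descent instead, you must state and prove the $\P(V'')$-factor lemma (including its equiabundant instance); alternatively, simply invoke the unbalanced result to dispose of all large $m$ at once.
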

\begin{proof}
We first prove (i) for $m \geq 8$.
By Proposition \ref{th:Qmn} and Remark~\ref{th:twoRemarks} (ii), it suffices to show that $\overline{R}(m,2)$ and $\overline{R}(m,3)$ are true for any $m\ge 8$.
Suppose that $n \in \{2,3\}$.
If $m \geq 8$, then $(m,n;1,2)$ is unbalanced. Furthermore,
$(m,n;1,2;m+1)$ is superabundant. Thus $\overline{R}(m,n)$ can be reduced to $T(m,n;1,2;m+1)$.
By Remark \ref{rem:unbalanced}, $T(m,n;1,2;m+1)$ is true
for $n \in \{2,3\}$. Thus $\overline{R}(m,n)$ is also true  for $m \geq 8$ and $n \in \{2,3\}$.

The remaining cases of (i) can be checked directly as follows:
Let $S=\C[e_0, \dots, e_m, f_0, \dots, f_n]$ and let $\bar{s}=\bar{s}(m,n)$.
Choose randomly $u_1, \dots, u_{\bar{s}} \in V$, $v_1, \dots, v_{\bar{s}-(m+1)} \in U_L$ and
$v_{\bar{s}-m}, \dots, v_{\bar{s}} \in W$. For each $i \in \{1, \dots, \bar{s}\}$, let $T_i$ be the ideal
of $S$ generated by $e_0 \otimes v_i^2, \dots, e_m \otimes v_i^2$, $u_i \otimes v_if_0, \dots,
u_i \otimes v_i f_n$ and let $I_L$ be the ideal generated by $V \otimes S_2(U_L)$.
Let $I=\sum_{i=1}^{\bar{s}}T_i+I_L$. Computing the minimal generating set of $I$, we can check in {\tt Macaulay2} that the vector space spanned by homogeneous elements of $I$ of
the multi-degree $(1,2)$  coincides with  $V \otimes S_2(W)$.

Claim (ii) can be also checked in the same way.
\end{proof}
\begin{theorem}
$T(m,n;1,2;s)$ is true for any $s \geq \bar{s}(m,n)$.
\label{th:mainsuper}
\end{theorem}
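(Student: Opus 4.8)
The plan is to reduce the theorem to a single boundary case and then run a two-step induction on $n$, feeding on the auxiliary statements developed in this section. Since $(m,n;1,2;\bar{s}(m,n))$ is superabundant, the monotonicity remark following Definition~\ref{th:abundance} (applied with $t=t'=0$) shows that it suffices to prove that $T(m,n;1,2;\bar{s}(m,n))$ is true; the assertion for all larger $s$ is then automatic. We may assume $m\ge 2$, since the case $m=1$ is already covered by the remark at the end of Section~\ref{sec:subabundant}, which gives $T(1,n;1,2;s)$ for every $s$.

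Next, I would prove $T(m,n;1,2;\bar{s}(m,n))$ by induction on $n$, running the two parities of $n$ side by side. The inductive step is Remark~\ref{th:twoRemarks}(i): if $\overline{R}(m,n)$ is true and $T(m,n-2;1,2;\bar{s}(m,n-2))$ is true, then $T(m,n;1,2;\bar{s}(m,n))$ is true; here one uses the identity $\bar{s}(m,n-2)=\bar{s}(m,n)-(m+1)$, so the step genuinely decreases $n$ by $2$ while keeping the statement in the required shape. The instances of $\overline{R}(m,n)$ needed are exactly those provided by Lemma~\ref{th:Rbarmn}: for $m\ge 3$ all $n\ge 2$, and for $m=2$ all $n\ge 3$. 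For the base cases: when $n=0$ we have $\bar{s}(m,0)=1$, and $T(m,0;1,2;1)$ is trivial because $X_{m,0}$ is a linearly embedded $\P^m$ and a single affine tangent cone already exhausts $V\otimes S_2(W)\cong V$; when $n=1$ we have $\bar{s}(m,1)=3$, and $T(m,1;1,2;3)$ is exactly Example~\ref{th:(m,1;1,2;3)}. For $m\ge 3$ these two cases seed, respectively, the even and the odd branch. For $m=2$ the case $n=1$ seeds the odd branch, while the even branch is seeded one step later, at $n=2$, by checking $T(2,2;1,2;\bar{s}(2,2))=T(2,2;1,2;4)$ directly (an explicit linear-algebra statement in $V\otimes S_2(W)\cong\C^{18}$, of the same type verified by {\tt Macaulay2} for the small cases of Lemma~\ref{th:Rbarmn}); this is precisely why Lemma~\ref{th:Rbarmn}(ii) only needs the range $n\ge 3$. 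Assembling these, $T(m,n;1,2;\bar{s}(m,n))$ holds for all admissible $m$ and $n$, and the theorem follows.

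I do not expect any single deep obstacle: granting Lemma~\ref{th:Rbarmn}, the argument above is a formal induction, and all the genuine content --- the explicit tangent-space computations and the $\overline{R}$-induction driven by Proposition~\ref{th:Qmn} --- has already been discharged in proving that lemma. The one thing that requires real attention is the bookkeeping: keeping the two parities and the two regimes $m\ge 3$ versus $m=2$ straight, and verifying that each application of Remark~\ref{th:twoRemarks}(i) lands on a case already established, so that the slightly different ranges in parts (i) and (ii) of Lemma~\ref{th:Rbarmn} are matched exactly by the chosen base cases.
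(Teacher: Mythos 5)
Your proposal is correct and follows essentially the same route as the paper: reduce to $s=\bar{s}(m,n)$ by superabundance, then induct on $n$ in steps of two via Remark~\ref{th:twoRemarks}(i), seeding the even and odd branches with $T(m,0;1,2;1)$, $T(m,1;1,2;3)$ (Example~\ref{th:(m,1;1,2;3)}), and the direct check of $T(2,2;1,2;4)$, with the needed instances of $\overline{R}(m,n)$ supplied by Lemma~\ref{th:Rbarmn}. Your explicit verification of the identity $\bar{s}(m,n-2)=\bar{s}(m,n)-(m+1)$ and the separate disposal of $m=1$ via the remark at the end of Section~\ref{sec:subabundant} are minor points the paper leaves implicit.
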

\begin{proof}
In Example~\ref{th:(m,1;1,2;3)}, we showed that $T(m,1;1,2;3)$ is true
for any $m$. One can directly check that $T(2,2;1,2;4)$ is true.
So, since $\overline{R}(2,n)$ is true for any $n \geq 3$ by
Proposition~\ref{th:Rbarmn}, it follows from
Remark~\ref{th:twoRemarks} (i) that $T(2,n;1,2;\overline{s})$ is true
for any $n \geq 1$.

Suppose now that $m \geq 3$.
If $n=0$, then $\bar{s}(m,0)=1$, and obviously $T(m,0;1,2;1)$ is true.
If $n=1$, $T(m,1;1,2;3)$ is true.
Moreover by Proposition~\ref{th:Rbarmn}, we know that
$\overline{R}(m,n)$ is true for any $n \geq 2$. Hence,  from
Remark~\ref{th:twoRemarks} (i) it follows that $T(m,n;1,2;\bar{s}(m,n))$ is
true for any $n$ and any $m\geq3$.
This concludes the proof.
\end{proof}
\section{Conjecture}
\label{sec:conjecture}
Let $X_{m,n}$ be the Segre-Veronese variety $\P^{m,n}$ embedded by the morphism given by $\cO(1,2)$.
The main purpose of this section is to give a conjecturally complete list of defective secant varieties
of $X_{m,n}$.

Let $V$ be an $m$-dimensional vector space over $\C$ with basis $\{e_0, \dots, e_m\}$
and let $W$ be an $n$-dimensional vector space
over $\C$ with basis $\{f_0, \dots, f_n\}$.  As mentioned at the beginning of Section
~\ref{sec:splittingtheorem}, for a given point $p=[u \otimes v^2] \in X_{m,n}$, the affine cone
$T_p(X_{m,n})$
over the tangent space to $X_{m,n}$ at $p$ is isomorphic to $V \otimes v^2 + u \otimes v W$.
Let $A(p)$ be the $(m+1) \times (m+1){n+2 \choose 2}$ matrix whose $i^{\mathrm{th}}$
row corresponds to $e_i \otimes v^2$ and
let $B(p)$ be the
$(n+1) \times (m+1){n+2 \choose 2}$ matrix whose $i^{\mathrm{th}}$ row corresponds to $u \otimes vf_i$.
Then $T_p(X_{m,n})$ is represented by the $(m+n+2) \times (m+1){n+2 \choose 2}$ matrix $C(p)$
obtained by stacking $A(p)$ and $B(p)$:
\[
 C(p)=(\ A(p)\ ||\ B(p)\ ).
\]
For randomly chosen points $p_1, \dots, p_s \in X_{m,n}$,
let $T_s(X_{m,n})= \sum_{i=1}^s T_{p_i}(X_{m,n})$.
Then $T_s(X_{m,n})$ is represented by the $s(m+n+2) \times (m+1){n+2 \choose 2}$ matrix
$C(p_1, \dots, p_s)$
defined by
\[
C(p_1, \dots, p_s) = (\ C(p_1) \ || \ C(p_2) \ || \ \cdots \ || \ C(p_s) \ ).
\]
Thus Remark~\ref{th:terracini} and semicontinuity  imply that if
\[
 \mathrm{rank} \ C(p_1, \dots, p_s) = \min \left\{ s(m+n+1), \ (m+1) {n+2 \choose 2} \right\},
\]
then $\sigma_s(X_{m,n})$ has the expected dimension.

We programed this in {\tt Macaulay2} and computed the dimension of
$\sigma_s(X_{m,n})$ for $m,n \leq 10$ to detect ``potential" defective
secant varieties of $X_{m,n}$.
This experiment shows that $X_{m,n}$ is non-defective except for
\begin{itemize}
\item $(m,n;1,2)$ unbalanced;
\item $(m,n)=(2,n)$, where $n$ is odd and $n \leq 10$;
\item $(m,n)=(4,3)$.
\end{itemize}
\begin{remark}
The defective cases we found in the experiments are all well-known.
In Remark~\ref{rem:unbalanced}, we gave an explanation of why if $(m,n;1,2)$ is unbalanced,
then $X_{m,n}$ is defective.
Here we will discuss the remaining known defective cases.
\begin{itemize}
\item[(i)]
It is classically known that
$\sigma_5(X_{2,3})$ is defective (see~\cite{CaCh} and \cite{CaCa} for  modern proofs).
Carlini an Chipalkatti  proved in their work  on Waring's problem for
several algebraic forms~\cite{CaCh} that
$T(2,5;1,2;8)$ is false.
In~\cite{Ott}, Ottaviani then proved, as a generalization of the Strassen theorem
\cite{Stra} on three-factor Segre varieties,
that $T(2,n;1,2;s)$ fails if $(n,s)=(2k+1,3k+2)$ for any $k \geq 1$.  Here we sketch his proof
for the defectiveness of $X_{2,2k+1}$.
Recall that $X_{2,2k+1}$ is the image of the Segre-Veronese embedding
\[
\nu_{1,2}:\P(V)\times \P(W)\to \P(V\otimes S^2W),
\]
where $V$ and $W$ have dimension $3$ and  $2k+2$ respectively.
For every tensor $\phi\in V\otimes S^2W$, let $S_\phi: V\otimes W^\vee \to \wedge^2
V\otimes W\cong V^\vee\otimes W$ be the contraction operator induced by $\phi$.
If $P, Q$ and $R$ are the three symmetric slices of $\phi$,
then $S_\phi$ can be written as a skew-symmetric matrix of order $3(2k+2)$
of the form
\[
S_\phi=
\left[\begin{array}{ccc}
0&P&Q\\
-P&0&R\\
-Q&-R&0
\end{array}
 \right].
\]
The rank of  $S_\phi$ is $3(2k+2)$ for a general tensor $\phi\in V\otimes S^2W$.
On the other hand, since the contraction operator corresponding to a
decomposable tensor has rank $2$, we have $\mathrm{rank}\  S_\phi\leq
2s$, if $\phi$ is the sum of $s$ decomposable tensors.
Since the decomposable tensors correspond to the points of  the
Segre-Veronese variety, we can deduce that if $s=3k+2$, then
$\sigma_s(X_{2,2k+1})$ does not fill $\P^{3{2k+3 \choose 2}-1}$.
\item[(ii)]
The defectiveness of $\sigma_6(X_{4,3})$ can be proved by the existence of a certain
rational normal curve in $X_{4,3}$ passing through generic six points of $X_{4,3}$.
Let $\pi_1 : \P^{4,3} \rightarrow \P^4$ and $\pi_2: \P^{4,3} \rightarrow \P^3$
be the canonical projections. Given generic points $p_1, \dots, p_6 \in \P^{4,3}$,
there is a unique twisted cubic $\nu_3: \P^1 \rightarrow C_3 \subset \P^3$ that passes through
$\pi_2(p_1), \dots, \pi_2(p_6)$. Let $q_i =\nu_3^{-1}(\pi_2(p_i))$ for each $i \in \{1, \dots, 6\}$.
Since any ordered subset of six points in general position in $\P^4$ is projectively equivalent
to the ordered set $\{\pi_1(p_1), \dots, \pi_1(p_6)\}$, there is a rational quartic curve
$\nu_4: \P^1 \rightarrow C_4 \subset \P^4$ such that
$\nu_4 (q_i)=\pi_1(p_i)$ for all $i \in \{1, \dots, 6\}$. Let $\nu=(\nu_4, \nu_3)$ and let $C = \nu(\P^1)$.
Then $C$ passes through $p_1, \dots, p_6$. The image of $C$ under the morphism given
by $\cO(1,2)$ is  a rational normal curve of degree $10(=4 \cdot 1+ 2 \cdot 3)$ in $\P^{10}$.
Thus we have
\[
 \dim \sigma_6(X_{4,3}) \leq 10 +6(7-1)=46 < 6(4+3+1)-1=47,
\]
and so  $\sigma_6(X_{4,3})$ is defective.
See~\cite{CaCh} for an alternative proof.
\end{itemize}
\label{rem:defective}
\end{remark}
The experiments with our program and Remark~\ref{rem:defective} suggest the following conjecture:
\begin{conjecture}
Let $X_{m,n}$ be the Segre-Veronese variety $\P^{m,n}$ embedded by the morphism given
by $\cO(1,2)$. Then $\sigma_s(X_{m,n})$ is defective if and only if
$(m,n,s)$ falls into one of the following cases:
\begin{itemize}
\item[(a)] $(m,n;1,2)$ is unbalanced and ${n+2 \choose 2}-n < s < \min\left\{ m+1, \ {n+2 \choose 2}\right\}$;
\item[(b)] $(m,n,s)=(2,2k+1,3k+2)$ with $k \geq 1$;
\item[(c)] $(m,n,s)=(4,3,6)$.
\end{itemize}
\end{conjecture}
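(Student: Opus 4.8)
The plan is to prove the two implications of the equivalence separately: first the ``if'' direction, that each listed triple $(m,n,s)$ is defective, which is already essentially in hand; and then the ``only if'' direction, that no other triple is defective, which I would reduce to a bounded band of secant orders and attack by a refined induction.

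For the ``if'' direction I would simply assemble the three constructions already recorded. Case~(a) is Remark~\ref{rem:unbalanced}: when $(m,n;1,2)$ is unbalanced and $\binom{n+2}{2}-n<s<\min\{m+1,\binom{n+2}{2}\}$, the $s$ general points lie on a proper subvariety of type $\P^{s-1,n}$, which forces $\dim\sigma_s(X_{m,n})\le s[\binom{n+2}{2}+m+1-s]$, strictly below the expected value. Case~(b) is Remark~\ref{rem:defective}(i): Ottaviani's contraction operator $S_\phi$ is skew-symmetric of order $3(2k+2)$, has maximal rank on a general $\phi$ but rank at most $2s$ on a sum of $s$ decomposables, so $s=3k+2$ cannot fill. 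Case~(c) is Remark~\ref{rem:defective}(ii): the rational normal curve of degree $10$ through six general points of $X_{4,3}$ yields $\dim\sigma_6(X_{4,3})\le 46<47$. Thus the forward implication requires no new argument.

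For the converse I would first observe that the unbalanced regime is \emph{already completely classified}: Remark~\ref{rem:unbalanced} records, from \cite{CGG1}, that when $(m,n;1,2)$ is unbalanced, $T(m,n;1,2;s)$ fails precisely in the range of case~(a). This settles all unbalanced pairs in both directions simultaneously, so the remaining task is confined to balanced pairs $(m,n)$, where I must establish non-defectivity for every $s$ outside the families (b) and (c). Here I would exploit monotonicity to shrink the problem to a narrow band: by the monotonicity remark following Definition~\ref{th:abundance}, once $T(m,n;1,2;s)$ is known for one subabundant $s$ it holds for all smaller $s$, and once known for one superabundant $s$ it holds for all larger $s$. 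Theorem~\ref{th:main} supplies truth up to $s=\underline{s}(m,n)$ (for $m\le n+2$) and Theorem~\ref{th:mainsuper} supplies truth from $s=\bar{s}(m,n)$ upward, so the only orders still to be settled lie in the interval $\underline{s}(m,n)<s<\bar{s}(m,n)$. Comparing the two formulas, this gap has length on the order of $(m-2)(m+1)/2$ and is concentrated around the equiabundant order $s\approx q(m,n)$.

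To close this gap I would set up a double induction on $m$ and $n$ paralleling Sections~\ref{sec:subabundant} and~\ref{sec:superabundant}, but with auxiliary statements of type $R$ and $Q$ attached to the intermediate $s$ rather than to $\underline{s}$ or $\bar{s}$: descend $n\mapsto n-2$ by specializing onto a codimension-two factor $\P^{m,n-2}$ via a Castelnuovo sequence, stabilize $m$ through the splitting Theorem~\ref{th:splitting} together with an intermediate analogue of Lemma~\ref{th:m1and2} checked in {\tt Macaulay2} for $m\in\{1,2\}$, and peel off the exceptional families by showing that the base cases feeding $(2,2k+1)$ and $(4,3)$ are exactly where the equiabundant auxiliary statement fails by the predicted amount, with these defects not propagating. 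The hard part will be precisely this equiabundant range. The obstruction is structural: the most natural specialization onto a factor $\P^{m',n}$ (the technique of Section~\ref{sec:splittingtheorem} and \cite{AOP}) reduces $T(m,n;1,2;s)$ to statements about two-factor \emph{Segre} varieties, which are themselves defective for many $s$, so that route is unusable unmodified near $s\approx q(m,n)$. One must instead control the $\P^{m,n-2}$ specialization sharply enough that the residual and trace Hilbert functions both attain their expected values simultaneously at the critical $s$, and this is exactly the regime in which $Q$ and $R$ are most delicate and in which the genuine defects (b) and (c) live. Establishing the sharp equiabundant versions of these auxiliary statements, uniformly in $m$ and $n$, is the step I expect to consume essentially all of the work, and is the reason the full statement remains a conjecture rather than a theorem.
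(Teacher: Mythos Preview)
The statement you were asked to prove is a \emph{conjecture} in the paper, not a theorem: the paper does not contain a proof of it, and neither do you. Your proposal correctly recognizes this in its final sentence. So there is no ``paper's own proof'' to compare against; what the paper actually does is (i) establish the ``if'' direction via the three mechanisms you summarize in Remark~\ref{rem:unbalanced} and Remark~\ref{rem:defective}, (ii) record the computational evidence for small $m,n$, and (iii) prove the conjecture completely only for $m=1$ (citing \cite{CaCh}) and $m=2$ (the theorem immediately following the conjecture, obtained because $\underline{s}(2,n)$ and $\bar{s}(2,n)$ differ by at most $2$).

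Your treatment of the ``if'' direction is correct and matches the paper. Your outline for the ``only if'' direction is a reasonable research program and accurately diagnoses the obstruction: the interval $\underline{s}(m,n)<s<\bar{s}(m,n)$ has length growing like $m^2/2$, the naive specialization to a Segre factor is blocked by Segre defectivity, and the codimension-two specialization would require sharp equiabundant analogues of the $Q$ and $R$ statements that the paper does not supply. But this remains a sketch, not a proof: you have not exhibited the auxiliary statements, proved their base cases, or shown that the induction actually closes. In particular your assertion that the defects at $(2,2k+1)$ and $(4,3)$ ``do not propagate'' is exactly the content that would need to be established, and you offer no mechanism for it. The honest summary is that you have reproduced the paper's state of knowledge and correctly identified where the open problem lies.
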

It is known that the conjecture is true for $m=1$ (see~\cite{CaCh}).
Here we prove that the conjecture is true for $m=2$ as a consequence of Theorems~\ref{th:main} and \ref{th:mainsuper}.
\begin{theorem}
$T(2,n;1,2;s)$ is true for any $s$ except $(n,s)=(2k+1,3k+2)$ with $k \geq 1$.
\end{theorem}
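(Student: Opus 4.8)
The plan is to deduce the statement directly from the two main theorems already proved in Sections~\ref{sec:subabundant} and~\ref{sec:superabundant}, together with the defectivity facts recorded in Remark~\ref{rem:defective}. First I would specialize the functions $\underline{s}$ and $\bar{s}$ to $m = 2$. Setting $k = \lfloor n/2 \rfloor$, one checks from Definition~\ref{th:defn0} that $\underline{s}(2,n) = 3k$ when $n$ is even, and $\underline{s}(2,n) = 3k+1$ when $n$ is odd (here the third branch, ``$m$ even and $n$ odd,'' applies), while $\bar{s}(2,n) = 3k+1$ for $n$ even and $\bar{s}(2,n) = 3k+3$ for $n$ odd.

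Next, for $n = 2k$ even: Theorem~\ref{th:main} applies since $2 \le n+2$, and gives that $T(2,n;1,2;s)$ is true for every $s \le 3k$; Theorem~\ref{th:mainsuper} gives it for every $s \ge 3k+1$. These two ranges are consecutive, so $T(2,2k;1,2;s)$ is true for all $s$, matching the statement (there is no exception of the form $(2k+1,3k+2)$ in the even case). For $n = 2k+1$ odd, the same two theorems cover all $s \le 3k+1$ and all $s \ge 3k+3$, leaving exactly $s = 3k+2$ uncovered. When $k \ge 1$ this is precisely the defective value: $T(2,2k+1;1,2;3k+2)$ is false by Ottaviani's generalization of Strassen's theorem, as recalled in Remark~\ref{rem:defective}(i). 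The only remaining small pair, $k = 0$, i.e.\ $(n,s) = (1,2)$, is not one of the claimed exceptions, and $T(2,1;1,2;2)$ follows from Example~\ref{th:example1} with $n = 1$, which gives truth for all $s \le \lfloor (n+1)/2 \rfloor + 1 = 2$; alternatively one can get it directly from the splitting theorem, Theorem~\ref{th:splitting}, writing $m = 1+0+1$.

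The proof is therefore essentially bookkeeping, and there is no genuine obstacle: all the substantive work---the inductive construction handling the subabundant range, the {\tt Macaulay2}-assisted base cases in the superabundant range, and the defectivity of $\sigma_{3k+2}(X_{2,2k+1})$---has already been done. The one thing to be careful about is verifying that the floor functions defining $\underline{s}(2,n)$ and $\bar{s}(2,n)$ really do leave the two ranges adjacent when $n$ is even and leave precisely the single gap $s = 3k+2$ when $n$ is odd, and remembering that the isolated case $(1,2)$ must be supplied by Example~\ref{th:example1} rather than by Theorem~\ref{th:main}, whose stated bound $\underline{s}(2,1) = 1$ does not reach it.
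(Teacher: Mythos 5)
Your proposal is correct and follows essentially the same route as the paper: specialize $\underline{s}$ and $\bar{s}$ to $m=2$, invoke Theorems~\ref{th:main} and~\ref{th:mainsuper} to cover the two ranges, and supply the leftover case $T(2,1;1,2;2)$ via Example~\ref{th:example1}. (Your computation $\underline{s}(2,2k)=3k$ is in fact the accurate one --- the paper's proof writes $\underline{s}=3k+1$ for $n$ even, a harmless slip since the two ranges still cover all $s$.)
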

\begin{proof}
Assume first that $n=2k$ is even. Then we have $\overline{s}=\underline{s}=3k+1$.
Hence, from Theorems~\ref{th:main} and \ref{th:mainsuper}, it follows that
$T(2,2k;1,2;s)$ is true for any $s$.

Suppose now that  $n=2k+1$ is odd. Then we have
$\underline{s}=3k+1$ and $\overline{s}=3k+3$.
Thus $T(2,n;1,2;s)$ is true for any $s \leq 3k+1$, by Theorem~\ref{th:main},
and for any $s \geq 3k+3$, by Theorem~\ref{th:mainsuper}.

If $n=1$, then $\underline{s}=1$ and $\overline{s}=3$.
So it remains only to prove that also $T(2,1;1,2;2)$ is true.
But this has been already proved in Example \ref{th:example1}.
So we completed the proof.
\end{proof}
In~\cite{Ott} it is also claimed that  $\sigma_{3k+2}(X_{2,2k+1})$ is a
hypersurface if $k \geq 1$ and that this can be proved  by modifying
Strassen's argument in~\cite{Stra}.
Then it follows that the equation of $\sigma_{3k+2}(X_{2,2k+1})$ is
given by the pfaffian of $S_{\phi}$, where $S_\phi$ is the skew-symmetric
matrix introduced in Remark~\ref{rem:defective} (i).
We conclude this paper by giving an alternative proof of the fact that
$\sigma_{3k+2}(X_{2,2k+1})$ is a hypersurface for $k \geq 1$.
\begin{definition}
Suppose that $n$ is odd.
Let $s=3\left\lfloor n/2 \right\rfloor+2$,  let $p_1, \dots,
p_{s-3}$ be general points of  $L$, let $q_1, q_2 , q_3$ be general points
of $\P^{2,n} \setminus L$
and let $V_{2,n}$ be the vector space  $\left\langle V \otimes S_2(U_L), \sum_{i=1}^{s-3} T_{p_i}(X_{2,n}), \sum_{i=1}^{3} T_{q_i}(X_{2,n})
\right\rangle$.
Then the following inequality holds:
\[
\dim V_{2,n} \leq   3{n+2 \choose 2}.
\]
We say that {\it $R(2,n)$ is true} if the equality holds.
\end{definition}
\begin{lemma}
Let $n\geq 3$ be an odd integer. Then $R(2,n)$ is true.
\label{th:R2n}
\end{lemma}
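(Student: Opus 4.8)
The plan is to prove $R(2,n)$ by descending induction on the odd integer $n$, in steps of $2$, with base case $n=3$ and inductive step modeled on Proposition~\ref{th:splitting3} together with Remark~\ref{th:twoRemarks}(ii). By the dictionary of Remark~\ref{th:fatPoints}, writing $s=3\lfloor n/2\rfloor+2$ and $Z=\{p_1^2,\dots,p_{s-3}^2,q_1^2,q_2^2,q_3^2\}$, the statement $R(2,n)$ is equivalent to $\dim H^0(\P^{2,n},\sI_{Z\cup L}(1,2))=0$ for a general such configuration; by semicontinuity it suffices to produce one special configuration, with the $p_i$ still on $L$ and the $q_i$ still off $L$, for which this cohomology vanishes.

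For the inductive step, take $n=2k+1\geq 5$ and assume $R(2,n-2)$. Choose a second general two-codimensional subspace $U_M\subset W$, set $M=\P(V)\times\P(U_M)$, and note that $L\cap M=\P(V)\times\P(U_L\cap U_M)$ is a general two-codimensional subvariety of $M\cong\P^{2,n-2}$. Since $s-3=3k-1$, specialize $3k-4$ of the points $p_i$ onto $L\cap M$, the remaining three onto $L\setminus(L\cap M)$, and all of $q_1,q_2,q_3$ onto $M\setminus(L\cap M)=M\setminus L\subset\P^{2,n}\setminus L$; this is a legitimate degeneration inside the family defining $R(2,n)$ because $L\cap M\subset L$. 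With $Z'=Z\cap M$, the exact sequence
\[
0\to\sI_{Z\cup L\cup M}(1,2)\to\sI_{Z\cup L}(1,2)\to\sI_{Z'\cup(L\cap M),M}(1,2)\to 0
\]
yields an injection $H^0(\P^{2,n},\sI_{Z\cup L}(1,2))\hookrightarrow H^0(M,\sI_{Z'\cup(L\cap M),M}(1,2))$ once we know $H^0(\P^{2,n},\sI_{Z\cup L\cup M}(1,2))=0$. For the latter I would argue exactly as in Proposition~\ref{th:splitting3}: for general $L,M$ a form in $H^0(\sI_{L\cup M}(1,2))$ automatically vanishes to order $2$ at every point of $L\cap M$ (locally $\sI_{L\cup M}=\sI_L\cdot\sI_M\subseteq\gm_p^2$ there), so the $3k-4$ double points supported on $L\cap M$ impose no conditions beyond $L\cup M$; hence $\sI_{Z\cup L\cup M}=\sI_{Z_0\cup L\cup M}$ with $Z_0=\{p_1^2,p_2^2,p_3^2,q_1^2,q_2^2,q_3^2\}$, and $H^0$ vanishes by $Q(2,n)$, which holds for $n\geq 3$ by Proposition~\ref{th:Qmn} and Lemma~\ref{th:m1and2}. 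On the other side, inside $M\cong\P^{2,n-2}$ the scheme $Z'\cup(L\cap M)$ is precisely the configuration appearing in $R(2,n-2)$: the subvariety $L\cap M$ plays the role of the codimension-two locus, the $3k-4=s(2,n-2)-3$ double points lying on it are the ``$p$'' points, and the three $q_i^2$ are the ``$q$'' points off it. Thus $H^0(M,\sI_{Z'\cup(L\cap M),M}(1,2))=0$ by the inductive hypothesis, whence $H^0(\P^{2,n},\sI_{Z\cup L}(1,2))=0$ and $R(2,n)$ follows.

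For the base case $R(2,3)$ the point counts above degenerate ($3k-4=-1$ when $k=1$), so a separate verification is needed. I would carry it out by a direct computation in {\tt Macaulay2}, in the spirit of Lemmas~\ref{th:m1and2} and \ref{th:Rbarmn}: choose random $u_1,\dots,u_5\in V$ and $v_1,\dots,v_5\in W$ with $v_1,v_2\in U_L$, form the ideal generated by $V\otimes S_2(U_L)$ together with generators of $T_{p_i}(X_{2,3})$ and $T_{q_i}(X_{2,3})$, and check that its degree-$(1,2)$ part is all of $V\otimes S_2(W)$, i.e.\ that the associated coefficient matrix has rank $3{5\choose 2}=30$.

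The bulk of the argument is thus a transcription of the residuation already used for $\underline{R}$ and $\overline{R}$; the points requiring care are the numerics $s(2,n)-s(2,n-2)=3=m+1$ and $s(2,n)-6=s(2,n-2)-3$, which make the restricted configuration match $R(2,n-2)$ on the nose, together with the verification that the specialized points may be taken general within their prescribed loci so that $Q(2,n)$ and $R(2,n-2)$ apply verbatim, and that semicontinuity is invoked in the right direction. I expect the genuine obstacle to be the base case $R(2,3)$: there is no smaller odd $n$ to induct from, and there appears to be no reduction of it to $Q$-type or to $\underline{R}/\overline{R}$ statements already in hand, so it must be settled by computation (or, alternatively, by an ad hoc residuation by a subvariety $\P^{2,2}\subset\P^{2,3}$, with the attendant bookkeeping of degrees).
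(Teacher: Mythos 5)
Your proposal is correct and follows the paper's strategy. The paper likewise proves the inductive step ``$Q(2,n)$ and $R(2,n-2)$ imply $R(2,n)$'' by residuation with respect to a second codimension-two subvariety $M$, exactly as in Proposition~\ref{th:splitting3}; it merely asserts that ``one can easily prove'' this, and your count $s(2,n)-3-3=s(2,n-2)-3=3k-4$ together with the absorption of the double points supported on $L\cap M$ into $\sI_{L\cup M}$ supplies precisely the omitted details. The only divergence is the base case $R(2,3)$: you settle it by a random-point rank computation in {\tt Macaulay2}, whereas the paper uses the ``ad hoc residuation'' you mention only as an alternative --- it specializes $q_1,q_2,q_3$ onto a divisor $H\cong\P^{2,2}$ of class $(0,1)$ meeting $L$ in $\P^{2,0}$, kills the trace using $\underline{R}(2,2)$, and then verifies the residual degree-$(1,1)$ statement by exhibiting explicit points for which $L$, $T_{p_1}(X'_{2,3})$, $T_{p_2}(X'_{2,3})$ and the three $T_{q_i}(X'_{2,3})$ span $V\otimes W$. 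Both routes are legitimate (the paper certifies other lemmas, e.g.\ Lemmas~\ref{th:m1and2} and~\ref{th:Rbarmn}, by exactly the kind of computation you propose), the paper's version of the base case having the modest advantage of being checkable by hand.
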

\begin{proof}
The proof is very similar to that of Proposition~\ref{th:Rmm1}. One can easily prove that
if $Q(2,n)$ is true and if $R(2,n-2)$ is true, then $R(2,n)$ is true.
Since we have already proved that $Q(2,n)$ is true, it suffices to show that $R(2,3)$ is true.

Let $p_1, p_2 \in L$ and let $q_1, q_2, q_3 \in \P^{2,3}$.  Choose a subvariety  $H$  of $\P^{2,3}$
of the form $\P^{2,2} = \P(V) \times \P(W')$ such that  $\P^{2,2}$ intersects $L$ in $\P^{2,0}$.
Suppose that $p_1, p_2 \not\in H$. Specializing $q_1, q_2$ and $q_3$ in $H \setminus L$,
we obtain an exact sequence:
\[
 0 \rightarrow \sI_{Z \cup L \cup H}(1,2)
 \rightarrow \sI_{Z \cup L}(1,2)
 \rightarrow \sI_{(Z \cup L) \cap H, H}(1,2)
 \rightarrow 0,
\]
where $Z = \{p_1^2, p_2^2, q_1^2, q_2^2, q_3^2\}$.  Since we have
already proved that $\underline{R}(2,2)$ is true, we can conclude that
$\dim H^0 (\sI_{(Z \cup L)\cap H, H}(1,2))=0$.
Thus it is enough to prove that $H^0 (\sI_{Z \cup L \cup H}(1,2))=0$ or
$H^0 (\sI_{\widetilde{Z}}(1,1))=0$, where $\widetilde{Z}$ is the
residual of $Z \cup L$ by $H$. Note that $\widetilde{Z}$ consists of
two double points $p_1^2$, $p_2^2$, three simple points $q_1, q_2,
q_3$ in $H$ and $L$.
Let $X'_{2,3}$ be the Segre-Veronese variety $\P^{2,3}$ embedded by
$\cO(1,1)$. We want to prove that $L$, $\sum_{i=1}^2T_{p_i}(X'_{2,3})$
and $\sum_{i=1}^3T_{q_i}(X'_{2,3})$ span $V \otimes W$.
Note that if $p=[u \otimes v]$, then $T_p(X'_{2,3})= V \otimes v + u
\otimes W$. Now assume the following:
\begin{itemize}
 \item $U_L = \langle f_0, f_1 \rangle$ and $W' = \langle f_1, f_2, f_3 \rangle$ ;
 \item $p_1=e_0 \otimes f_0, p_2= e_1 \otimes f_1 \in V \otimes U_L$;
 \item $q_1=e_2 \otimes f_2, q_2=e_2 \otimes f_3 \in V \otimes W'$.
\end{itemize}
For any non-zero $q_3 \in V \otimes W'$,
one can show that
\[
V \otimes W = \left\langle L, \sum_{i=1}^2T_{p_i}(X'_{2,3}), \sum_{i=1}^3T_{q_i}(X'_{2,3}) \right\rangle.
\]
Thus we complete the proof.
\end{proof}
\begin{proposition}
If $(n,s)=(2k+1,3k+2)$ for $k \geq 1$, then
$\dim \sigma_s(X_{2,n}) = 3{n+2 \choose 2}-2$.
\label{th:hypersurface}
\end{proposition}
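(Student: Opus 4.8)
The plan is to translate the statement into the vanishing of a Hilbert function and then to run the same codimension-two specialization used in Lemma~\ref{th:R2n}. Put $n=2k+1$, $s=3k+2$, and write $N=3{n+2 \choose 2}$, so that $X_{2,n}\subset\P^{N-1}$. By Terracini's lemma and Remark~\ref{th:fatPoints}, if $Z$ is a general collection of $s$ double points of $\P^{2,n}$ then $\dim\sigma_s(X_{2,n})=N-1-\dim H^0(\P^{2,n},\sI_Z(1,2))$, so it suffices to prove $\dim H^0(\P^{2,n},\sI_Z(1,2))=1$ for general $Z$. One of the two inequalities is already available: by the Strassen--Ottaviani argument recalled in Remark~\ref{rem:defective}(i), $\sigma_s(X_{2,n})$ does not fill its ambient $\P^{N-1}$, so $\dim H^0(\P^{2,n},\sI_Z(1,2))\ge 1$ for general $Z$.

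For the reverse inequality I would argue by semicontinuity, exhibiting one special $Z_0$ with $\dim H^0(\P^{2,n},\sI_{Z_0}(1,2))\le 1$, by induction on $k$. Fix a two-codimensional subvariety $L=\P(V)\times\P(U_L)\cong\P^2\times\P^{2k-1}=\P^2\times\P^{2(k-1)+1}$ of $\P^{2,n}$, and specialize $Z_0=\{p_1^2,\dots,p_{s-3}^2,q_1^2,q_2^2,q_3^2\}$ so that the $s-3=3k-1$ points $p_i$ lie general on $L$ while $q_1,q_2,q_3$ lie general off $L$. Setting $Z'=Z_0\cap L=\{p_1^2,\dots,p_{3k-1}^2\}$, which is a general collection of $3k-1=3(k-1)+2$ double points of $L$, the exact sequence
\[
0\to\sI_{Z_0\cup L}(1,2)\to\sI_{Z_0}(1,2)\to\sI_{Z',L}(1,2)\to 0
\]
gives $\dim H^0(\P^{2,n},\sI_{Z_0}(1,2))\le\dim H^0(\P^{2,n},\sI_{Z_0\cup L}(1,2))+\dim H^0(L,\sI_{Z'}(1,2))$. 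By Lemma~\ref{th:R2n} the statement $R(2,n)$ holds, which is precisely the assertion that the first summand on the right vanishes. For the second summand, if $k\ge 2$ the inductive hypothesis applied to $L\cong\P^2\times\P^{2(k-1)+1}$ gives $\dim H^0(L,\sI_{Z'}(1,2))=1$, while if $k=1$ then $Z'$ is a pair of general double points of $\P^2\times\P^1$ and $T(2,1;1,2;2)$ (Example~\ref{th:example1}) gives $\dim H^0(L,\sI_{Z'}(1,2))=3{3 \choose 2}-8=1$. In every case $\dim H^0(\P^{2,n},\sI_{Z_0}(1,2))\le 0+1=1$, which closes the induction; together with the lower bound this yields $\dim H^0(\P^{2,n},\sI_Z(1,2))=1$ for general $Z$, hence $\dim\sigma_s(X_{2,n})=N-2=3{n+2 \choose 2}-2$.

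The only substantial ingredient is $R(2,n)$, which is already in hand by Lemma~\ref{th:R2n}; everything else is the same bookkeeping with the Castelnuovo sequence and with dimension counts as in the proofs of Propositions~\ref{th:splitting1} and~\ref{th:splitting3}. The point I would be careful about is that the specialization by itself produces only the bound $\dim H^0\le 1$, since the restriction map $H^0(\P^{2,n},\sI_{Z_0}(1,2))\to H^0(L,\sI_{Z'}(1,2))$ could a priori be zero; the matching lower bound $\dim H^0\ge 1$ genuinely has to be imported from the known defectiveness of $\sigma_s(X_{2,n})$, and the content of the proposition is exactly that the defect is no larger than one, i.e. that this choice of specialization is sharp.
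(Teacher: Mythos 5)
Your proof is correct and follows essentially the same route as the paper: specialize $s-3$ of the double points onto a codimension-two $L\cong\P^2\times\P^{n-2}$, use Lemma~\ref{th:R2n} to kill $H^0(\sI_{Z\cup L}(1,2))$, bound the restriction to $L$ by induction on $k$, and import the matching lower bound from the known defectiveness. The only (harmless) divergence is the base case $k=1$, where the paper simply quotes that $\sigma_5(X_{2,3})$ is a hypersurface while you rederive it from $R(2,3)$ and $T(2,1;1,2;2)$.
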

\begin{proof}
The proof is by induction on $k$.
It is well-known that $\sigma_5(X_{2,3})$ is a hypersurface.
Thus we may assume that $k \geq 2$. Let $p_1, \dots, p_s \in \P^{2,n}$.
Then there is a subvariety $L$ of $\P^{2,n}$ of the form $\P^{2,n-2}$ such that
$p_1,p_2,p_3 \in L$.  Let us suppose that $p_4, \dots, p_s \in \P^{2.n} \setminus L$.
Then we have an exact sequence
\[
 0 \rightarrow \sI_{Z \cup L}(1,2) \rightarrow \sI_Z (1,2)
 \rightarrow \sI_{Z \cap L, L}(1,2) \rightarrow 0.
\]
Taking cohomology, we get
\[
 \dim H^0 (\sI_Z (1,2))  \leq  \dim H^0 (\sI_{Z \cup L}(1,2) ) + \dim H^0 (\sI_{Z \cap L, L}(1,2)).
\]
By Lemma~\ref{th:R2n},   $\dim H^0 (\sI_{Z \cup L}(1,2) )=0$. Thus, by induction hypothesis,
\[
\dim H^0 (\sI_Z (1,2))  \leq   \dim H^0 (\sI_{Z \cap L, L}(1,2)) \leq 1.
\]
As already claimed, it is known that $T(2,n;1,2;s)$ does not
hold, i.e. $\dim H^0 (\sI_Z (1,2))  \geq 1$.
It follows that $\sigma_{s}(X_{2,n})$ is a hypersurface in
the ambient space
$\P^{3\binom{n+2}{2}-1}$.
\end{proof}
\noindent
{\it Acknowledgements.}
We thank Tony Geramita and Chris Peterson for organizing the Special Session on ``Secant Varieties
and Related Topics" at the Joint Mathematics Meetings in San Diego, January 2008.
Our collaboration started in the stimulating atmosphere of that meeting.
We also thank Giorgio Ottaviani for useful suggestions and
Enrico Carlini for his communications regarding this paper and a careful reading.
The first author would like to thank FY 2008 Seed Grant Program of the University of Idaho Research Office.
The second author (partially supported by Italian MIUR) would like to thank the University of Idaho
for the warm hospitality and for financial support.

\end{document}